\numberwithin{equation}{section}
\newtheorem{theorem}{Theorem}[section]
\newtheorem{corollary}[theorem]{Corollary}
\newtheorem{lemma}[theorem]{Lemma}
\newtheorem{prop}[theorem]{Proposition}
\theoremstyle{definition}
\newtheorem{example}[theorem]{Example}
\newtheorem{remark}[theorem]{Remark}
\newtheorem*{ack}{Acknowledgments}
\newcommand{\N}{\mathbb{N}}
\newcommand{\Z}{\mathbb{Z}}
\newcommand{\Q}{\mathbb{Q}}
\newcommand{\R}{\mathbb{R}}
\newcommand{\C}{\mathbb{C}}
\newcommand{\T}{\mathbb{T}}
\renewcommand{\P}{\mathbb{P}}
\newcommand{\CP}{\mathbb{CP}}
\renewcommand{\k}{\Bbbk}
\newcommand{\RR}{\mathcal{R}}
\newcommand{\V}{\mathcal{V}}
\newcommand{\VV}{\mathcal{V}}
\newcommand{\A}{{\mathcal{A}}}
\newcommand{\B}{{\mathcal{B}}}
\newcommand{\WW}{\mathcal{W}}
\newcommand{\XX}{\mathcal{X}}
\newcommand{\NN}{\mathcal{N}}
\newcommand{\h}{{\mathfrak{h}}}
\newcommand{\bo}{{\mathbf 1}}
\newcommand{\bz}{{\mathbf 0}}
\newcommand{\bm}{{\mathbf{m}}}
\DeclareMathOperator{\rank}{rank}
\DeclareMathOperator{\corank}{corank}
\DeclareMathOperator{\gr}{gr}
\DeclareMathOperator{\im}{im}
\DeclareMathOperator{\coker}{coker}
\DeclareMathOperator{\codim}{codim}
\DeclareMathOperator{\id}{id}
\DeclareMathOperator{\ab}{{ab}}
\DeclareMathOperator{\abf}{{abf}}
\DeclareMathOperator{\ch}{char}
\DeclareMathOperator{\Hom}{{Hom}}
\DeclareMathOperator{\Tor}{{Tor}}
\DeclareMathOperator{\Ext}{{Ext}}
\DeclareMathOperator{\ann}{{ann}}
\DeclareMathOperator{\pr}{pr}
\DeclareMathOperator{\Aut}{Aut}
\DeclareMathOperator{\depth}{depth}
\DeclareMathOperator{\Tors}{Tors}
\DeclareMathOperator{\TC}{TC}
\DeclareMathOperator{\Der}{Der}
\DeclareMathOperator{\orb}{orb}
\DeclareMathOperator{\ii}{i}
\newcommand{\surj}{\twoheadrightarrow}
\newcommand{\inj}{\hookrightarrow}
\newcommand{\isom}{\xrightarrow{\,\simeq\,}}
\newcommand{\bwedge}{\mbox{\normalsize $\bigwedge$}}
\newcommand{\abs}[1]{\left| #1 \right|}
\def\set#1{{\{ #1\}}}
\newcommand{\mysetminus}{\,\setminus\,}
\definecolor{dkgreen}{RGB}{0,100,0}
\definecolor{dkbrown}{RGB}{139,69,19}
\newcommand{\pcoor}[1]{%
  \begingroup\lccode`~=`: \lowercase{\endgroup
  \edef~}{\mathbin{\mathchar\the\mathcode`:}\nobreak}%
  [
  \begingroup
  \mathcode`:=\string"8000
  #1%
  \endgroup 
  ]
}
\definecolor{lime}{HTML}{A6CE39}
\DeclareRobustCommand{\orcidicon}{
	\begin{tikzpicture}
	\draw[lime, fill=lime] (0,0) 
	circle [radius=0.16] 
	node[white] {{\fontfamily{qag}\selectfont \tiny ID}};
	\draw[white, fill=white] (-0.0625,0.095) 
	circle [radius=0.007];
	\end{tikzpicture}
	\hspace{-2mm}
}
 \def\l@subsection{\@tocline{2}{0pt}{4pc}{6pc}{}}
\def\l@subsubsection{\@tocline{3}{0pt}{8pc}{8pc}{}}
\begin{document}
\date{May 4, 2024}

\title[Milnor fibrations of arrangements with trivial algebraic monodromy]{%
Milnor fibrations of arrangements with trivial\\ algebraic monodromy} 

\author[Alexandru~I.~Suciu]{Alexandru~I.~Suciu$^1$\!\!\orcidA{}}
\address{Department of Mathematics,
Northeastern University,
Boston, MA 02115, USA}
\email{\href{mailto:a.suciu@northeastern.edu}{a.suciu@northeastern.edu}}
\urladdr{\href{https://suciu.sites.northeastern.edu}%
{https://suciu.sites.northeastern.edu}}
\thanks{$^1$Supported in part by Simons Foundation Collaboration 
Grants for Mathematicians \#693825}

\subjclass[2010]{Primary
32S55,  
57M10  
Secondary
14F35, 
32S22,  
55N25. 
}

\keywords{Hyperplane arrangement, Milnor fibration, 
monodromy, lower central series, characteristic variety, 
resonance variety}

\begin{abstract}
Each complex hyperplane arrangement gives rise to a 
Milnor fibration of its complement. Although the Betti numbers of 
the Milnor fiber $F$ can be expressed in terms of the jump loci for 
rank $1$ local systems on the complement, explicit formulas are still lacking 
in full generality, even for $b_1(F)$. We study here 
the ``generic" case (in which $b_1(F)$ is as small as possible), and look 
deeper into the algebraic topology of such Milnor fibrations with trivial 
algebraic monodromy. Our main focus is on the cohomology jump 
loci and the lower central series quotients of $\pi_1(F)$. 
In the process, we produce a pair of arrangements for which 
the respective Milnor fibers have the same Betti numbers, 
yet non-isomorphic fundamental groups: the difference is picked 
by the higher-depth characteristic varieties and by the 
Schur multipliers of the second nilpotent quotients.
\end{abstract}
\maketitle

\tableofcontents

\section{Introduction}
\label{sect:intro}

\subsection{The Milnor fibration}
\label{subsec:intro1}
In a seminal book \cite{Mi}, Milnor introduced a fibration which soon became the 
central object of study in singularity theory. In its simplest form, 
the construction associates to a homogeneous polynomial 
$f\in \C[z_0,\dots , z_d]$ a smooth fibration over $\C^*$, 
defined by restricting the map $f\colon \C^{d+1} \to \C$
to the complement of its zero-set. 
The Milnor fiber, $F=f^{-1}(1)$, is a smooth complex affine variety 
of complex dimension $d$. The monodromy of the fibration, 
$h\colon F\to F$, is given by $h(z)=e^{2\pi \ii/n} z$, where $n=\deg f$. 
A key question is to compute the characteristic polynomials of the 
induced homomorphisms in homology, $h_q\colon H_q(F;\C)\to H_q(F;\C)$. 

We are mainly interested in the case when $f$ has singularities in 
codimension $1$. Arguably the simplest situation in this regard is 
when the polynomial $f$ completely factors into distinct linear forms. 
This situation is neatly described by a hyperplane arrangement, that is, 
a finite collection $\A$ of codi\-mension-$1$ linear subspaces in $\C^{d+1}$. 
Choosing a linear form $f_H$ with kernel $H$ for each hyperplane $H\in \A$, 
we obtain a homogeneous polynomial, 
$f=\prod_{H\in \A} f_H$, which in turn defines the Milnor fibration 
of the complement of the arrangement, $M=M(\A)$, with fiber $F=F(\A)$.   
More generally, if $\bm\colon \A\to \N$, $H\mapsto m_H$ is a choice of 
multiplicities for the hyperplanes comprising $\A$, we may consider the 
polynomial $f_{\bm}=\prod_{H\in \A} f_H^{m_H}$ and the corresponding 
Milnor fibration, with fiber $F_{\bm}$. 

To analyze these fibrations, it is most natural to use the rich 
combinatorial structure encoded in the intersection lattice of $\A$, 
that is, the poset of all intersections of hyperplanes in $\A$, 
ordered by reverse inclusion and ranked by codimension. 
A much-studied question in the subject asks: 
Is the characteristic polynomial of the algebraic monodromy of the 
(usual) Milnor fibration, $\Delta_{\A, q}(t) = \det(tI-h_q)$, determined by 
the intersection sub-lattice $L_{\le q+1}(\A)$? Despite much effort---and 
some progress---over the past 30--40 years, the problem is still open, 
even in degree $q=1$.

In this paper, we take a different tack, and focus instead on the  
``generic" situation, to wit, on those 
hyperplane arrangements for which the monodromy of the Milnor 
fibration acts trivially on the homology of the Milnor fiber, 
either with $\Z$ or with $\C$ coefficients. 

\subsection{Cohomology jump loci}
\label{subsec:intro:cjl}
We start by analyzing the structure of the {\em characteristic varieties} 
(the jump loci for homology in rank $1$ local systems) and the 
{\em resonance varieties} (the jump loci of the Koszul complex 
associated to the cohomology algebra) of the Milnor fiber 
of a multi-arrangement in the trivial algebraic monodromy setting.

Let $U=\P(M)$ be the projectivization of the complement $M=M(\A)$.
Since $U$ is a smooth, connected, quasi-projective variety, its characteristic 
varieties, $\V^q_s(U)$, are finite unions of torsion-translates of algebraic subtori 
of the character group, $\Hom(\pi_1(U),\C^*)=H^1(U;\C^*)$, see~\cite{Ar, BW}.  
Since $U$ is also a formal space, its resonance varieties, $\RR^q_s(U)$,  
coincide with the tangent cone at the trivial character to $\V^q_s(U)$, 
see \cite{CS99, DPS-duke, DP-ccm}.  
As shown in \cite{FY}, the varieties $\RR^1_s(U)$ 
may be described solely in terms of multinets on sub-arrangements 
of $\A$.  In general, though, the varieties $\V^1_s(U)$
may contain components which do not pass through the origin, see 
\cite{Su02, CDO03, DeS-plms}. We explain in detail the relationship 
between the cohomology jump loci of $M$ and $U$ in 
Proposition \ref{prop:cjl-MU} and Corollary \ref{cor:cjl-MU-bis}.

Now  let $(\A,\bm)$ be a multi-arrangement in $\C^{d+1}$ and let 
$F_{\bm}\to M\to\C^*$ be the Milnor fibration of the complement, 
with monodromy $h\colon F_{\bm}\to F_{\bm}$, 
We then have a regular $\Z_N$-cover, $\sigma_{\bm}\colon F_{\bm}\to U$, 
where $N=\sum_{H\in \A} m_H$.
In Theorem \ref{thm:cjl-mf-trivialmono}, we prove the following result, 
which relates the degree $1$ cohomology jump loci of $F_{\bm}$ to those of 
$U=\P(M)$, under a trivial algebraic monodromy assumption.

\begin{theorem}
\label{intro:cjl-mf-trivialmono}
Suppose the map $h\colon F_{\bm}\to F_{\bm}$ 
induces the identity on $H_1(F_{\bm};\Q)$. Then, 
\begin{enumerate}[itemsep=2pt, topsep=-1pt]
\item  \label{intro-res-f-bis}
The induced homomorphism $\sigma^*_{\bm}\colon H^1(U;\C)\to H^1(F_{\bm};\C)$ 
is an isomorphism that identifies $\RR^1_s(U)$ with $\RR^1_s(F_{\bm})$, for all $s\ge 1$.
\item \label{intro-cv-f-bis}
The induced homomorphism  $\sigma^*_{\bm}\colon H^1(U;\C^*)\to H^1(F_{\bm};\C^*)^0$ 
is a surjection with kernel isomorphic to $\Z_N$. Moreover, 
\begin{enumerate}
\item \label{intro-pa}
For each $s\ge 1$, the map $\sigma^*_{\bm}$ establishes a bijection 
between the sets of irreducible components of $\VV^1_s(U)$ and 
$\WW^1_s(F_{\bm})$ that pass through the identity. 
\item \label{intro-pb}
The map 
$\sigma^*_{\bm} \colon \VV^1_1(U)\to \WW^1_1(F_{\bm})$ is a surjection.
\end{enumerate}
\end{enumerate}
\end{theorem}

Here, $H^1(F_{\bm};\C^*)^0$ denotes the identity component of the 
character group $H^1(F_{\bm};\C^*)$, while $\WW^1_s(F_{\bm})$ denotes 
its intersection with $\VV^1_s(F_{\bm})$. The theorem builds on and 
sharpens results of Dimca and Papadima from \cite{DP-pisa}.

\subsection{Abelian duality and propagation}
\label{subsec:intro dual}
It has long been recognized that complements of complex hyperplane 
arrangements satisfy certain vanishing properties for homology with 
coefficients in local systems. In \cite{DSY16, DSY17}, we revisited this 
subject, in a more general framework.  

Given a connected, finite-type CW-complex $X$ with fundamental group $G$, 
we say that $X$ is an {\em $\ab$-duality space}\/ of dimension $m$ if $H^q(X;\Z{G}^{\ab})=0$ 
for $q\ne m$ and $H^{m}(X;\Z{G}^{\ab})$ is non-zero and torsion-free. Replacing the 
abelianization of $G$ by the torsion-free abelianization, $G_{\abf}=G_{\ab}/\Tors$,  
we obtain the analogous notion of {\em $\abf$-duality space}\/ (of dimension $m$). 
These properties imposes stringent conditions on the cohomological invariants of the 
space $X$. Most notably, as shown in \cite{DSY17}, if $X$ is an $\ab$-duality space 
of dimension $n$, then the characteristic varieties of $X$ {\em propagate}, 
that is, $\{\bo\}=\V^0_1(X)\subseteq \V^1_1(X)\subseteq\cdots\subseteq\V^{m}_1(X)$. 

It was shown in \cite{DSY16, DSY17} that complements of hyperplane arrangements 
are $\ab$-duality spaces; see also \cite{DeS-sigma, LMW} for generalizations of this result. 
Moreover, it was shown in \cite{DSY17} that the $\ab$-duality property behaves well under a 
certain type of ``$\ab$-exact" fibrations. Making use of these results, together with their 
adaptations in the $\abf$-duality/$\abf$-exact context, we establish in 
Theorem \ref{thm:mf-abel-duality} and Corollary \ref{cor:mf-propagate} 
the following:

\begin{theorem}
\label{intro:mf-abel-duality}
Let $\A$ be a central arrangement of rank $r$ and let 
$F_{\bm}=F_{\bm}(\A)$ be the Milnor fiber associated to a multiplicity 
vector $\bm\colon \A\to \N$. 
\begin{enumerate}[itemsep=1pt]
\item \label{mfab1}
Suppose the monodromy action on $H_1(F_{\bm};\Z)$ is trivial. Then,
\begin{enumerate}
\item  \label{mfab2}
$F_{\bm}$ is an $\ab$-duality space of dimension $r-1$. 
\item  \label{mfab3}
The characteristic varieties of $F_{\bm}$ propagate; that is,
\[
\V^1_1(F_{\bm})\subseteq \V^2_1(F_{\bm})
\subseteq\cdots\subseteq\V^{r-1}_1(F_{\bm}).
\]
\end{enumerate}
\item  \label{mfab4}
Suppose the monodromy action on $H_1(F_{\bm};\Q)$ is trivial. Then.
\begin{enumerate}
\item   \label{mfab5}
$F_{\bm}$ is an $\abf$-duality space of dimension $r-1$. 
\item  \label{mfab6}
The restricted characteristic varieties of $F_{\bm}$ propagate; that is,
\[
\WW^1_1(F_{\bm})\subseteq \WW^2_1(F_{\bm})\subseteq
\cdots\subseteq\WW^{r-1}_1(F_{\bm}).
\]
\end{enumerate}
\end{enumerate}
\end{theorem}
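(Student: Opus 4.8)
The plan is to realize $F_{\bm}$ as the fiber of an $\ab$-exact fibration in the sense of \cite{DSY17}, whose total space and base are already known to satisfy abelian duality, and then to read propagation off the duality. First I would reduce to the essential case: after a linear change of coordinates the polynomial $f_{\bm}$ depends on only $r$ of the $d+1$ variables, so $F_{\bm}(\A)\cong F_{\bm}(\A_0)\times\C^{d+1-r}$ with $\A_0$ essential of rank $r$, the monodromies are identified, and the contractible factor changes neither the $\ab$- nor the $\abf$-duality dimension; so assume $\A$ is essential in $\C^r$. Then recall from \cite{DSY16} that $M=M(\A)$ is an $\ab$-duality space of dimension $r$ (equivalently $U=\P(M)$ is an $\ab$-duality space of dimension $r-1$ and $M\simeq U\times\C^{*}$), and that $\C^{*}\simeq K(\Z,1)$ is an $\ab$-duality space of dimension $1$. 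Since $H_1(M;\Z)=\Z^{|\A|}$ and $H_1(\C^{*};\Z)=\Z$ are torsion-free, for $M$ and $\C^{*}$ the $\ab$- and $\abf$-duality notions coincide.

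The heart of the matter is to check that the Milnor fibration $F_{\bm}\to M\xrightarrow{f_{\bm}}\C^{*}$ is $\ab$-exact. The two inputs are: (i) $\pi_1(\C^{*})$ acts trivially on $H_1(F_{\bm};\Z)$, which is exactly the hypothesis of part~(1); and (ii) $H_1(F_{\bm};\Z)\to H_1(M;\Z)$ is injective with cokernel $H_1(\C^{*};\Z)=\Z$. For (ii), because $h_{*}=\id$ on $H_0$ and $H_1$ of $F_{\bm}$, the Wang segment $H_1(F_{\bm})\xrightarrow{h_{*}-\id}H_1(F_{\bm})\to H_1(M)\to H_0(F_{\bm})$ degenerates to a short exact sequence $0\to H_1(F_{\bm};\Z)\to H_1(M;\Z)\to\Z\to0$, which splits since $\Z$ is free. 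The $\ab$-exact fibration theorem of \cite{DSY17} now applies, its effect here being that the Leray--Serre spectral sequence with coefficients in $\Z\pi_1(M)^{\ab}=\Z[H_1(M)]\cong\Z[H_1(F_{\bm})][t^{\pm1}]$ is concentrated in the column $p=1$, where $E_2^{1,q}\cong H^{q}(F_{\bm};\Z\pi_1(F_{\bm})^{\ab})$; hence $H^{n}(M;\Z\pi_1(M)^{\ab})\cong H^{n-1}(F_{\bm};\Z\pi_1(F_{\bm})^{\ab})$, and since the left side vanishes for $n\ne r$ and is non-zero and torsion-free for $n=r$, the space $F_{\bm}$ is an $\ab$-duality space of dimension $r-1$. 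The propagation assertion of part~(1) is then immediate from the theorem of \cite{DSY17} that an $\ab$-duality space of dimension $n$ has $\{\bo\}=\V^0_1\subseteq\V^1_1\subseteq\cdots\subseteq\V^{n}_1$, taken with $n=r-1$.

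For part~(2) I would run the same argument in the $\abf$-category. Under the weaker hypothesis that $h_{*}=\id$ on $H_1(F_{\bm};\Q)$, the Wang sequence over $\Q$ gives $0\to H_1(F_{\bm};\Q)\to H_1(M;\Q)\to\Q\to0$, hence $H_1(F_{\bm};\Z)_{\abf}\hookrightarrow H_1(M;\Z)_{\abf}=H_1(M;\Z)$ with cokernel $\Z$; together with triviality of the $\pi_1(\C^{*})$-action on $H_1(F_{\bm};\Z)_{\abf}$ this makes the Milnor fibration $\abf$-exact, so the $\abf$-version of the fibration theorem of \cite{DSY17} shows $F_{\bm}$ is an $\abf$-duality space of dimension $r-1$, and the $\abf$-version of the propagation theorem---an $\abf$-duality space of dimension $n$ has $\WW^0_1\subseteq\cdots\subseteq\WW^{n}_1$---yields $\WW^1_1(F_{\bm})\subseteq\cdots\subseteq\WW^{r-1}_1(F_{\bm})$. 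I expect the main obstacle to be the verification of $\ab$-exactness with only the $H_1$-level triviality of the monodromy (rather than triviality on all of $H_{*}(F_{\bm};\Z)$): the point is that abelian duality is insensitive to everything in the homotopy of $F_{\bm}$ above $\pi_1^{\ab}$, so that the column $p=1$ of the spectral sequence is governed purely by the extension $0\to H_1(F_{\bm})\to H_1(M)\to\Z\to0$ coming from the Wang sequence; a secondary technical point is choosing the splitting of this extension compatibly with the $\pi_1(\C^{*})$-action, which is exactly where triviality of the monodromy (not merely injectivity of $H_1(F_{\bm})\to H_1(M)$) is used.
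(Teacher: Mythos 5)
Your proposal is correct and follows essentially the same route as the paper: realize $F_{\bm}\to M\to\C^*$ as an $\ab$- (resp.\ $\abf$-) exact fibration, invoke the duality-for-fibers result from \cite{DSY17} (Propositions \ref{prop:ab-fibration} and \ref{prop:abf-fibration} in the paper) to get the duality statements, and then invoke the propagation theorems (Theorems \ref{thm:cv-prop} and \ref{thm:cw-prop}). Two small remarks: the reduction to the essential case is unnecessary (Theorem \ref{thm:arr-prop} already covers arbitrary rank), and your ``main obstacle'' worry at the end is a non-issue---since $H_2(\C^*;\Z)=0$ the connecting map $\delta$ vanishes automatically, so $\ab$-exactness here reduces exactly to the $H_1$-level triviality hypothesis, which is also what the paper uses via the split-sequence characterization from \cite{Su-abexact}.
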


This result strengthens \cite[Thm.~6.7]{DSY17}, 
where only part \eqref{mfab1} is proved (in the particular case when 
$F=F(\A)$ is the usual Milnor fiber of an essential arrangement), 
but not part \eqref{mfab4}. 
We also show: If the monodromy action on $H_i(F_{\bm};\Q)$ is trivial 
for $i\le q$, then the resonance varieties of $F_{\bm}$ propagate 
in that range; that is,  $\RR^1_1(F_{\bm})\subseteq\cdots\subseteq \RR^{q}_1(F_{\bm})$.

\subsection{Associated graded Lie algebras}
\label{subsec:intro lcs}

The lower central series (LCS) of a group $G$ is defined inductively 
by setting $\gamma_1 (G)=G$ and $\gamma_{k+1}(G) =[G,\gamma_k (G)]$ 
for $k\ge 1$. This series is both normal and central; therefore, 
its successive quotients, $\gr_k(G)= \gamma_k(G)/\gamma_{k+1}(G)$, 
are abelian groups. The first such quotient 
coincides with the abelianization, $G_{\ab}=H_1(G;\Z)$.  
The associated graded Lie algebra of the group, $\gr(G)$, is the direct 
sum of the groups $\gr_k(G)$, with Lie bracket (compatible 
with the grading) induced from the group commutator. Important 
in this context is also the Chen Lie algebra of $G$, that is, the 
associated graded Lie algebra $\gr(G/G'')$ of the maximal 
metabelian quotient of $G$. 

When the group $G$ is finitely generated, the LCS quotients of $G$ are 
also finitely generated. We let $\phi_k(G)\coloneqq \rank(\gr_k(G))$ be the 
ranks of these abelian groups and we let $\theta_k(G)\coloneqq \rank(\gr_k(G/G''))$ 
be the Chen ranks of $G$. Quite a bit is known about the LCS 
ranks and the Chen ranks of arrangement groups, though almost nothing 
is known about the corresponding ranks for the Milnor fiber groups. 
As a first step in this direction, we show that the former determine 
the latter when the algebraic monodromy is trivial. More precisely, 
we prove in Theorems \ref{thm:trivial-mono-z} an \ref{thm:trivial-mono-q} 
the following statements.

\begin{theorem}
\label{thm:intro lcs}
Let $(\A,\bm)$ be a multi-arrangement and let 
$F_{\bm}$ be the corresponding Milnor fiber, 
with monodromy $h\colon  F_{\bm} \to F_{\bm}$.
Set $G=\pi_1(M(\A))$ and $K=\pi_1(F_{\bm})$. 
\begin{enumerate}[itemsep=2.5pt]
\item
If $h_*\colon H_1(F_{\bm};\Z)\to H_1(F_{\bm};\Z)$ 
is the identity map, then 
$\gr_{\ge 2}(K) \cong \gr_{\ge 2}(G)$ and 
$\gr_{\ge 2}(K/K'')  \cong  \gr_{\ge 2}(G/G'')$, 
as graded Lie algebras.
\item
If $h_*\colon H_1(F_{\bm};\Q)\to H_1(F_{\bm};\Q)$ is the identity map,  
then 
$\gr_{\ge 2}(K)\otimes \Q \cong \gr_{\ge 2}(G) \otimes \Q$ and 
$\gr_{\ge 2}(K/K'') \otimes \Q  \cong  \gr_{\ge 2}(G/G'') \otimes \Q$, 
as graded Lie algebras. 
\end{enumerate}
In either case, $\phi_k(K)=\phi_k(G)$ and $\theta_k(K)=\theta_k(G)$ 
for all $k\ge 2$.
\end{theorem}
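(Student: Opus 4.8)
The plan is to reduce both statements to a comparison between the lower central series of $K=\pi_1(F_{\bm})$ and that of $P=\pi_1(U)$, where $U=\P(M(\A))$, exploiting two structural features of the situation. First, since $\A$ is central, $M(\A)\cong U\times\C^*$, so $G=\pi_1(M(\A))\cong P\times\Z$; hence $\gamma_k(G)\cong\gamma_k(P)$ for all $k\ge 2$, while $G'\cong P'$, and therefore $G''\cong P''$ and $\gamma_k(G/G'')\cong\gamma_k(P/P'')$ for all $k\ge 2$. This already identifies $\gr_{\ge 2}(G)$ with $\gr_{\ge 2}(P)$ and $\gr_{\ge 2}(G/G'')$ with $\gr_{\ge 2}(P/P'')$ as graded Lie algebras, so it will be enough to prove the analogous identifications with $G$ replaced by $K$ (rationally, in case~(2)). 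Second, $\sigma_{\bm}\colon F_{\bm}\to U$ is a regular $\Z_N$-cover whose group of deck transformations is generated by the Milnor monodromy $h$; hence $K$ is a normal subgroup of $P$ with $P/K\cong\Z_N$, and — since inner automorphisms act trivially on each associated graded quotient — $\Z_N$ acts well-definedly on the graded Lie algebras $\gr(K)$ and $\gr(K/K'')$, a generator acting as the map induced by $h_*$.

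For part~(1), the key point is that the associated graded Lie algebra of any group is generated by its degree-$1$ component, so a graded automorphism of $\gr(K)$ restricting to the identity on $\gr_1(K)=H_1(F_{\bm};\Z)$ is the identity. Under the hypothesis of part~(1) this applies, so $\Z_N$ acts trivially on all of $\gr(K)$. Choosing $t\in P$ whose image generates $\Z_N$, so that $P=\langle K,t\rangle$ and conjugation by $t$ realizes the action, we obtain $[t,x]\in\gamma_{k+1}(K)$ for every $x\in\gamma_k(K)$, because the class of $[t,x]$ in $\gr_k(K)$ vanishes. Now $[P,\gamma_k(K)]$ is generated, as a normal subgroup of $P$, by $\gamma_{k+1}(K)=[K,\gamma_k(K)]$ together with the commutators $[t,x]$ with $x\in\gamma_k(K)$; an induction on $k$ starting from $\gamma_2(P)=[P,P]$ therefore gives $\gamma_k(P)=\gamma_k(K)$ for all $k\ge 2$, whence $\gr_{\ge 2}(K)\cong\gr_{\ge 2}(P)$, the isomorphism being induced by the inclusion $K\hookrightarrow P$. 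In particular $K'=\gamma_2(K)=\gamma_2(P)=P'$, so $K''=P''$, and $K/K''$ is normal in $P/P''$ with quotient $\Z_N$ and with abelianization again $H_1(F_{\bm};\Z)$; running the identical induction inside $P/P''$ yields $\gr_{\ge 2}(K/K'')\cong\gr_{\ge 2}(P/P'')$. Together with the reductions above, this proves part~(1).

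For part~(2), trivial rational monodromy identifies $H_1(U;\Q)$ with the $\Z_N$-coinvariants of $H_1(F_{\bm};\Q)$, which is $H_1(F_{\bm};\Q)$ itself; thus $(\sigma_{\bm})_*\colon H_1(F_{\bm};\Q)\to H_1(U;\Q)$ is an isomorphism (in agreement with Theorem~\ref{intro:cjl-mf-trivialmono}). Moreover the transfer of the finite cover $\sigma_{\bm}$ makes $(\sigma_{\bm})_*\colon H_2(F_{\bm};\Q)\to H_2(U;\Q)$ surjective, and hence the induced map $H_2(K;\Q)\to H_2(P;\Q)$ is surjective as well (trace it through the surjections $H_2(F_{\bm};\Q)\to H_2(K;\Q)$ and $H_2(U;\Q)\to H_2(P;\Q)$ coming from the Hopf maps). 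By Stallings' theorem, $\sigma_{\bm}$ then induces an isomorphism of rational Malcev Lie algebras $\m(K)\isom\m(P)$; passing to associated graded Lie algebras gives $\gr(K)\otimes\Q\cong\gr(P)\otimes\Q$, and passing first to the maximal metabelian quotients gives $\gr(K/K'')\otimes\Q\cong\gr(P/P'')\otimes\Q$. Combined with the first paragraph, this proves part~(2); and in either case $\phi_k(K)=\rank\gr_k(K)=\rank\gr_k(G)=\phi_k(G)$ and $\theta_k(K)=\theta_k(G)$ for all $k\ge 2$.

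The step I expect to require the most care is part~(2): one must make sure Stallings' hypotheses hold for the homomorphism of fundamental groups and not merely for the spaces (which is why the $H_2$-surjectivity is routed through the Hopf maps), and one must know that forming the Malcev Lie algebra commutes both with passing to the associated graded and with metabelianization. An alternative that sidesteps Stallings is to rerun the commutator induction of part~(1) over $\Q$: the $\Z_N$-action on $\gr(K)\otimes\Q$ is trivial, so the class in $\gr_k(K)$ of each $[t,x]$ with $x\in\gamma_k(K)$ is torsion, which forces the quotient $\gamma_k(P)/\gamma_k(K)$ to be finite for every $k\ge 2$ and hence $\gr_k(K)\otimes\Q\cong\gr_k(P)\otimes\Q$.
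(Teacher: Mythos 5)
Your proof is correct, and it follows a genuinely different route from the paper's. The paper (Theorems~\ref{thm:trivial-mono-z} and~\ref{thm:trivial-mono-q}) works directly with the split exact sequence $1\to K\to G\to\Z\to1$, citing the Falk--Randell theorem \cite{FR}, as refined in \cite{Su-lcs-mono}, for the decomposition $\gr(G)\cong\gr(K)\rtimes_{\bar{\varphi}}\Z$, and citing \cite{Su-abexact, Su-lcs-mono} for the Chen part and for the rational case. You instead split off the central $\Z$ at the outset, reducing $G$ to $P=\pi_1(U)$, and then exploit the finite-index normal inclusion $K\triangleleft P$ with $P/K\cong\Z_N$ acting via deck transformations. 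In part~(1), your commutator induction showing $\gamma_k(P)=\gamma_k(K)$ for $k\ge 2$ is a self-contained replacement for the appeal to Falk--Randell; it uses only degree-one generation of $\gr(K)$, and although it does not recover the finer semidirect-product structure of $\gr(G)$, it fully establishes the stated result and its Chen analogue. In part~(2), invoking Stallings' rational theorem for the inclusion $K\hookrightarrow P$ is a clean alternative, and your care in verifying the $H_1$-isomorphism and $H_2$-epimorphism on group homology (via transfer and the Hopf surjections) is exactly what is needed. The two caveats you flag are indeed where the remaining work lies: deriving the Chen conclusion in~(2) from the Malcev isomorphism uses the nontrivial fact that Malcev completion commutes with metabelianization (essentially the content of \cite{PS-imrn04}); and your sketched ``alternative'' rational commutator induction is more delicate than advertised, since finiteness of each $\gamma_k(P)/\gamma_k(K)$ requires its own inductive bookkeeping and does not follow in one line from the torsion of the class of each $[t,x]$. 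Since you lead with Stallings, that last remark is a side issue rather than a gap in your main argument.
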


Consequently, if the algebraic monodromy is trivial, both the LCS ranks 
and the Chen ranks of $\pi_1(F_{\bm})$ are combinatorially determined. 

\subsection{Constructions and examples}
\label{subsec:intro ex}
In Section \ref{sect:constructions}, we describe several classes of 
hyperplane arrangements for which the Milnor fibration has trivial 
algebraic monodromy. The simplest are the Boolean arrangements, 
followed by the generic arrangements. In both cases, complete answers 
regarding the homology of the Milnor fiber are known. We review these 
classical topics, in the more general context of arrangements 
with multiplicities.

Next, we consider the class of decomposable arrangements. 
Following \cite{PS-imrn04}, we say that an arrangement $\A$ is 
{\em decomposable}\/ (over $\Q$) if there are no elements in 
$\gr_3(\pi_1(M(\A))\otimes \Q$ besides those coming from the 
rank $2$ flats; that is, if $\phi_3(\pi_1(M(\A))= 
\sum_{X\in L_2(\A)} \binom{\mu(X)}{2}$, 
where $\mu\colon L(\A) \to \Z$ is the M\"{o}bius function.
As shown in \cite{Su-decomp}, for any choice of multiplicities $\bm$ 
on such an arrangement, the algebraic monodromy of the Milnor fibration, 
$h_*\colon H_1(F_{\bm};\Q)\to H_1(F_{\bm};\Q)$, is trivial, provided a 
certain technical condition is satisfied. Other classes 
of arrangements for which this conclusion holds are those for which certain 
multiplicities conditions are satisfied (see \cite{CDO03,Li02,Wi,WS,LX}), or 
the associated double point graph is connected and satisfies some additional 
requirements (see \cite{Bt14, SS17, Ve}). 

In \cite{Fa93}, Falk constructed a pair of rank-$3$ arrangements 
that have non-isomorphic intersection lattices, yet whose 
complements are homotopy equivalent. In Section \ref{sect:mf-falk}, we 
analyze in detail the Milnor fibrations of these arrangements. In both cases, 
the monodromy acts as the identity on first integral homology of the Milnor fiber. 
Nevertheless, the respective Milnor fibers are not homotopy equivalent. 
The difference is picked by both the degree-$1$, depth-$2$ characteristic 
varieties, and by the Schur multipliers of the second nilpotent quotients 
of their fundamental groups.

As shown in \cite{Su02}, deleting a suitable hyperplane from the 
$\operatorname{B}_3$ reflection arrangement yields an arrangement 
$\A$ of $8$ hyperplanes for which the variety $\V^1_1(M(\A))$ 
has an irreducible component 
(a subtorus translated by a character of order $2$) 
that does not pass through the identity of the character torus. 
As a consequence, there is a choice of multiplicities $\bm$ on $\A$ 
such that the monodromy $h\colon F_{\bm}\to F_{\bm}$ acts trivially 
on $H_1(F_{\bm};\Q)=\Q^7$ but not on $H_1(F_{\bm};\Z)=\Z^7\oplus \Z_2^2$, 
see \cite{CDS, DeS-plms}. We illustrate our techniques in Section \ref{sect:delB3} 
with a computation of the degree-$1$ characteristic varieties of $F_{\bm}$ and 
the low-degree LCS quotients and Chen groups of $\pi_1(F_{\bm})$.
Using a different approach, Yoshinaga constructed in \cite{Yo20} an arrangement 
$\A$ of $16$ hyperplanes such that the usual Milnor fiber itself, $F=F(\A)$, has non-trivial 
$2$-torsion. We summarize in Section \ref{sect:mf-yoshi} the information our techniques 
yield in this case regarding the LCS quotients and the Chen groups of $\pi_1(F)$.

\subsection{Organization of the paper}
\label{subsec:organize}
Roughly speaking, the paper is divided into three parts. 
The first one deals with some basic notions regarding hyperplane arrangements.  
In \S\ref{sect:hyper-comp} we discuss the combinatorics of an arrangement $\A$,  
as it relates to the topology of the complement $M(\A)$, while in 
\S\ref{sect:cjl-arr} we review the resonance and characteristic 
varieties of $\A$.

The second part covers the Milnor fibration of  
a multi-arrangement $(\A,\bm)$. In \S\ref{sect:mf} we 
discuss the homology of the Milnor fiber  $F_{\bm}$ and the 
monodromy action in homology. Under the assumption that this 
action is trivial, we investigate several topological invariants of 
the Milnor fiber: the cohomology jump loci in 
\S\ref{sect:cjl-milnor}, abelian duality and propagation 
of cohomology jump loci in \S\ref{sect:abel-prop}, and the 
lower central series of $\pi_1(F_{\bm})$~in~\S\ref{sect:lcs}.

The third part starts with \S\ref{sect:constructions}, where we 
describe ways to construct arrangements with trivial algebraic 
monodromy. The techniques developed in this work are illustrated 
with several examples worked out in detail: the pair of Falk 
arrangements in \S\ref{sect:mf-falk}, the deleted $\operatorname{B}_3$ 
arrangement in \S\ref{sect:delB3}, and Yoshinaga's icosidodecahedral 
arrangement in \S\ref{sect:mf-yoshi}.

\section{Complements of hyperplane arrangements}
\label{sect:hyper-comp}

\subsection{Hyperplane arrangements}
\label{subsec:hyp arr}

An {\em arrangement of hyperplanes}\/ is a finite set $\A$ of 
codimension-$1$ linear subspaces in a finite-dimensional 
complex vector space $\C^{d+1}$.  The combinatorics of the 
arrangement is encoded in its {\em intersection lattice}, $L(\A)$,   
that is, the poset of all intersections of hyperplanes in $\A$ (also 
known as flats), ordered by reverse inclusion, and ranked by 
codimension. 

Without much loss of generality, we will assume throughout 
that the arrangement is {\em central}, that is, all the hyperplanes 
pass through the origin.  For each hyperplane $H\in \A$, let 
$f_H\colon \C^{d+1} \to \C$ be a linear form with kernel $H$. 
The product $f=\prod_{H\in \A} f_H$, then, is a defining 
polynomial for the arrangement, unique up to a non-zero 
constant factor. Notice that $f$ is a homogeneous polynomial 
of degree equal to $n=\abs{\A}$, the number of hyperplanes 
comprising $\A$. 

The complement of the arrangement, 
$M(\A)=\C^{d+1}\mysetminus \bigcup_{H\in\A}H$, 
is a connected, smooth, complex quasi-projective variety.  Moreover,  
$M=M(\A)$ is a Stein manifold, and thus it has the homotopy type of a 
CW-complex $K$ of dimension at most $d+1$. In fact, $M$ splits 
off the complex linear subspace $\bigcap_{H\in \A} H$, whose dimension 
we call the {\em corank}\/ of $\A$. Thus, setting $\rank(\A)\coloneqq 
d+1-\corank(\A)$, we have that $\dim(K)\le \rank(\A)$. 
If $\corank(\A)=0$, we will say that $\A$ is {\em essential}. .

The group $\C^*$ acts freely on $\C^{d+1}\mysetminus \set{0}$ via 
$\zeta\cdot (z_0,\dots,z_{d})=(\zeta z_0,\dots, \zeta z_{d})$. 
The orbit space is the complex projective space of dimension $d$, 
while the orbit map, $\pi\colon \C^{d+1}\mysetminus \set{0} \to \CP^{d}$,  
$z \mapsto [z]$, is the Hopf fibration. 
The set  $\P(\A)=\set{\pi(H)\colon H\in \A}$ is an 
arrangement of codimension $1$ projective subspaces in $\CP^{d}$. 
Its complement, $U=U(\A)$, coincides with the quotient 
$\P(M)=M/\C^*$. 
The Hopf map restricts to a bundle map, $\pi\colon M\to U$, with fiber 
$\C^{*}$. Fixing a hyperplane $H_0\in \A$, we see that $\pi$ is 
also the restriction to $M$ of the bundle map 
$\C^{d+1}\mysetminus H_0\to \CP^{d} \mysetminus \pi(H_0) \cong \C^{d}$.  
This latter bundle is trivial, and so we have a diffeomorphism  
$M \cong U\times \C^*$.

\subsection{Fundamental group}
\label{subsec:pi1}
Fix a basepoint $x_0$ in the complement of $\A$, and consider the 
fundamental group $G(\A)=\pi_1(M(\A),x_0)$. 
For each hyperplane $H\in \A$, pick a meridian curve about $H$, 
oriented compatibly with the complex orientations on $\C^{d+1}$ and $H$, 
and let $\gamma_H$ denote the based homotopy class of this curve, 
joined to the basepoint by a path in $M$.  By the van Kampen theorem, 
then, the arrangement group, $G=G(\A)$, is generated by the set 
$\set{\gamma_H : H\in \A}$. Using the braid monodromy algorithm from 
\cite{CS97}, one may obtain a finite presentation 
of the form $G=F_n/R$, where $F_n$ is the rank $n$ free group on 
the set of meridians and the relators in $R$ belong to the commutator 
subgroup $F_n'$. Consequently, the abelianization of the arrangement 
group, $G_{\ab}=H_1(G;\Z)$, is isomorphic to $\Z^n$.

\begin{example}
\label{ex:config}
The reflection arrangement of type $\operatorname{A}_{n-1}$, also known 
as the braid arrangement, consists of the diagonal hyperplanes 
$H_{ij}=\set{z_i-z_j=0}$ in  $\C^{n}$. The intersection 
lattice is the lattice of partitions of the set $\set{1,\dots,n}$, 
ordered by refinement.  The complement $M$ is the configuration space 
of $n$ ordered points in $\C$, which is a classifying space for 
the Artin pure braid group on $n$ strings, $P_{n}$. 
\end{example} 

Under the diffeomorphism $M\cong U\times \C^{*}$, the arrangement group 
splits as $\pi_1(M)\cong \pi_1(U)\times \pi_1(\C^{*})$, where the central subgroup 
$\pi_1(\C^{*})=\Z$ corresponds to the subgroup of $\pi_1(M)$
generated by the product of the meridional curves $\gamma_H$ (taken 
in the order given by an ordering of the hyperplanes). We shall denote 
by $\overline{\gamma}_H=\pi_{\sharp}(\gamma_H)$ the image of $\gamma_H$ 
under the induced homomorphism $\pi_{\sharp}\colon \pi_1(M)\surj \pi_1(U)=\pi_1(M)/\Z$. 

For the purpose of computing the group $G(\A)=\pi_1(M(\A))$, it is enough to assume that 
the arrangement $\A$ lives in $\C^3$, in which case $\bar{\A}=\P(\A)$ 
is an arrangement of (projective) lines in $\CP^2$. This is clear when the 
rank of $\A$ is at most $2$, and may be achieved otherwise 
by taking a generic $3$-slice, an operation which does not
change either the poset $L_{\le 2}(\A)$ or the group $G(\A)$. 
For a rank-$3$ arrangement, the set $L_1(\A)$ is in $1$-to-$1$ correspondence 
with the lines of $\bar{\A}$, while $L_2(\A)$ is in $1$-to-$1$ correspondence 
with the intersection points of $\bar{\A}$.  Moreover, the poset structure of $L_{\le 2}(\A)$ 
mirrors the incidence structure of the point-line configuration $\bar{\A}$.   

The {\em localization}\/ of an arrangement $\A$ at a flat $X\in L(\A)$ 
is defined as the sub-arrange\-ment $\A_{X}\coloneqq \{H\in \A\mid H\supset X\}$. 
The inclusion $\A_{X}\subset \A$ gives rise to an inclusion of complements, 
$j_X\colon M(\A) \inj M(\A_{X})$.  Choosing a point $x_0$ sufficiently 
close to $\bz\in \C^{d+1}$, we can make it a common basepoint for both 
$M(\A)$ and all the local complements $M(\A_X)$.  As shown in \cite{DSY16}, 
there exist basepoint-preserving maps $r_X\colon M(\A_X)\to M(\A)$ 
such that $j_X\circ r_X\simeq \id$ relative to $x_0$; moreover, if 
$H\in \A$ and $H\not\supset X$, then the map $r_X \circ j_X \circ r_H$ is 
null-homotopic. Consequently, the induced homomorphisms 
$(r_X)_{\sharp}\colon G(\A_X) \to G(\A)$ are all injective.  

For an arrangement $\A$ in $\C^3$, we will say that a rank-$2$ flat $X$ 
has multiplicity $q=q_X$ if $\abs{\A_X}=q$, or, equivalently, if the point 
$\P(X)$ has exactly $q$ lines from $\bar{\A}$ passing through it. 
In this case, the localized sub-arrangement $\A_X$ is a pencil of 
$q$ planes. Consequently, $M(\A_X)$ is homeomorphic to 
$(\C \mysetminus \{\text{$q-1$ points}\}) \times \C^* \times \C$,
and thus it is a classifying space for the group 
$G(\A_X) \cong F_{q-1}\times \Z$.

\subsection{Cohomology ring}
\label{subsec:OS}

The cohomology ring of a hyperplane arrangement 
complement $M=M(\A)$ was computed by Brieskorn \cite{Br}, 
building on the work of Arnol'd on the cohomology 
ring of the pure braid group. In \cite{OS}, Orlik and 
Solomon gave a simple description of this ring, solely 
in terms of the intersection lattice $L(\A)$, as follows.  
Fix a linear order on $\A$, and let $E$ be the exterior 
algebra over $\Z$ with generators $\set{e_H \mid H\in \A}$ 
in degree $1$.  Next, define a differential $\partial \colon E\to E$ 
of degree $-1$, starting from $\partial(1)=0$ 
and $\partial(e_H)=1$, and extending $\partial$ to 
a linear operator on $E$, using the graded Leibniz rule. 
Finally, let $I(\A)$ be the ideal of $E$ generated by $\partial e_{\B}$, 
for all $\B\subset \A$ such that $\codim \bigcap_{H\in \B} H < \abs{\B}$, 
where $e_\B\coloneqq \prod_{H\in \B} e_H$.  Then 
\begin{equation}
\label{eq:OS-alg}
H^*(M(\A);\Z)\cong E/I(\A).
\end{equation}

The inclusions $\{j_X\}_{X\in L(\A)}$ assemble into a map 
$j\colon M\to \prod_{X\in L(\A)} M(\A_X)$. The work of 
Brieskorn \cite{Br} insures that the homomorphism induced 
by $j$ in cohomology is an isomorphism in all 
positive degrees.  By the K\"{u}nneth formula, then, we have that 
$H^k(M;\Z) \cong \bigoplus_{X\in L_k(\A)} H^k(M(\A_X);\Z)$, 
for all $k\ge 1$.  It follows that the homology groups of 
the complement of $\A$ are torsion-free, with ranks given by
\begin{equation}
\label{eq:betti-M}
b_k(M)=\sum_{X\in L_k(\A)} (-1)^k \mu(X),
\end{equation}
where $\mu\colon L(\A) \to \Z$ is the M\"{o}bius function, 
defined inductively by $\mu(\C^{d+1})=1$ 
and $\mu(X)=-\sum_{Y\supsetneq X} \mu(Y)$. The homology 
groups of the projectivized complement, $U=\P(M)$, are 
also torsion free, with ranks computed inductively from the formulas 
$b_0(U)=1$ and $b_k(U)+b_{k-1}(U)=b_k(M)$ for $k\ge 1$.

In particular, we have that 
$H_1(M;\Z)\cong \Z^n$, with basis $\set{x_H : H\in \A}$, where 
$x_H$ is the homology class represented by the 
meridional curve $\gamma_H$. Moreover, 
$H_1(U;\Z)=H_1(M;\Z) \slash \big(\sum_{H\in\A} x_H\big)\cong \Z^{n-1}$. 
We will denote by $\overline{x}_H=[\overline{\gamma}_H]$ the image of 
$x_H$ in $H_1(U;\Z)$.

\subsection{Formality}
\label{subsec:arr-formal}
A connected, finite-type CW-complex $X$ is said to be {\em formal}\/ 
if its rational cohomology algebra, $H^*(X;\Q)$, can be connected by a 
zig-zag of quasi-isomorphisms to $A^{*}_{\mathrm{PL}}(X)$, the algebra 
of polynomial differential forms on $X$ defined by Sullivan in \cite{Sullivan77}. 
The notion of $q$-formality is defined similarly, with the cdga morphisms 
in the zig-zag only being required to induce isomorphisms in degrees 
up to $q$ and monomorphisms in degree $q+1$. It is known that a 
$q$-formal CW-complex of dimension $q+1$ is actually formal. 
Moreover, if $Y\to X$ is a finite, regular cover and $Y$ is $q$-formal, 
then $X$ is also $q$-formal.  For more on this topic we refer to 
\cite{SW-forum, Su-formal} and references therein.

For an arrangement $\A$ in $\C^{d+1}$, the complement $M$ is formal, 
in a very strong sense. Indeed, for each $H\in \A$, the $1$-form 
$\omega_H= \frac{1}{2\pi \ii} d \log f_H$ on $\C^{d+1}$ restricts 
to a $1$-form on $M$.  As shown by Brieskorn \cite{Br},  
if $\mathcal{D}$ denotes the subalgebra 
of the de~Rham algebra $\Omega^*_{\rm dR}(M)$ 
generated over $\R$ by these $1$-forms, 
the correspondence $\omega_H \mapsto [\omega_H]$ 
induces an isomorphism $\mathcal{D} \to H^*(M;\R)$. 
Sullivan's machinery from \cite{Sullivan77} then 
implies that $M$ is formal.  Alternatively, it is known 
that the mixed Hodge structure on $H^*(M;\Q)$ 
is pure; thus, the ``purity implies formality" results of 
Dupont \cite{Du} and Chataur--Cirici \cite{CC} yield another 
proof of the formality of $M$.

\section{Cohomology jump loci of arrangements}
\label{sect:cjl-arr}

\subsection{Resonance varieties}
\label{subsec:res-var}

Let $A$ be a graded, graded-commutative algebra over $\C$. 
We will assume that each graded piece $A^q$ is finite-dimensional 
and $A^0=\C$. For each element $a \in A^1$, we turn the algebra 
$A$ into a cochain complex, $(A , \delta_a)$, with differentials 
$\delta^q_a\colon A^q\to A^{q+1}$, $u\mapsto au$. The fact that 
$\delta_a^{q+1}\circ \delta_a^q=0$ follows at once from the observation that $a^2=-a^2$ 
(by graded-commutativity of multiplication in $A$), which implies $a^2=0$. 
By definition, the (degree $q$, depth $s$) {\em resonance varieties}\/ of $A$ 
are the jump loci for the cohomology of this complex, 
\begin{equation}
\label{eq:resa}
\RR^q_s(A)= \{a \in A^1 \mid \dim_{\C} H^q(A , \delta_{a}) \ge  s\}.
\end{equation}

These sets are Zariski-closed, homogeneous subsets of the affine 
space $A^1$; in particular, they are either empty or they contain the 
zero-vector $\bz\in A^1$. 
Setting $b_q(A,a)\coloneqq \dim_{\C} H^q(A , \delta_{a})$ 
for the Betti numbers of the cochain complex $(A , \delta_{a})$, we 
see that $b_q(A,\bz)$ is equal to the usual Betti number $b_q(A)=\dim_{\C} A^q$. 
Therefore, the point $\bz\in A^1$ belongs to $\RR^q_s(A)$ if and only if 
$b_q(A)\ge s$. In particular, since $A^0=\C$, we have that $\RR^0_1(A)=\{\bz\}$ 
and $\RR^0_s(A)=\emptyset$ if $s>1$.

We will mostly consider the degree 
one resonance varieties. Clearly, these varieties 
depend only on the truncated algebra $A^{\le 2}$.  More explicitly, 
$\RR^1_s(A)$ consists of $\bz$, together with all elements 
$a \in A^1$ for which there exist $u_1,\dots , u_s \in A^1$ 
such that the span of $\{a,u_1,\dots, u_s\}$ has dimension 
$s+1$ and $au_1=\cdots =au_s=0$ in $A^2$. Finally, 
if $\varphi\colon A\to B$ is a morphism of commutative 
graded algebras, and  $\varphi$ is injective in degree $1$, then 
the linear map $\varphi^1\colon A^1\to B^1$ embeds $\RR^1_s(A)$ 
into $\RR^1_s(B)$, for each $s\ge 1$. 

Completely analogous definitions work for algebras $A$ over a field 
$\k$ of characteristic different from $2$. When $\ch(\k)= 2$, special 
care needs to be taken, to account for the fact that the square of 
an element $a\in A^1$ may not vanish in this case; we refer to \cite{Su-bock} 
for details.

Now let $X$ be a connected, finite-type CW-complex. Its cohomology 
algebra, $A=H^*(X;\C)$, with multiplication given by the cup-product, 
satisfies the properties listed at the start of this section. 
Therefore, we may define the resonance varieties of the space $X$ 
to be the sets $\RR^q_s(X)\coloneqq \RR^q_s(H^*(X;\C))$, viewed as 
homogeneous subsets of the affine space $H^1(X;\C)$, and likewise for 
$\RR^q_s(X,\k)\subseteq H^1(X;\k)$.  When $M=M(\A)$ is an 
arrangement complement, the fact that $H_1(M;\Z)$ is torsion-free 
implies that $a^2=0$ for all $a\in H^1(M;\k)$, even when $\ch(\k)=2$;  
thus, the usual definition of resonance works for all fields.

\subsection{Multinets and pencils}
\label{subsec:multinets}
The resonance varieties of complements of hyperplane arrangements 
were introduced in the mid-1990s by Falk \cite{Fa97} and further 
studied in the ensuing decade in papers such as 
\cite{CS99, MS00, LY00, Su01, Su02}.
Work of Falk and Yuzvinsky \cite{FY} greatly clarified 
the nature of the degree $1$ resonance varieties of arrangements. 
Let us briefly review their construction.

A {\em multinet}\/ $\NN$ on an arrangement $\A$ consists 
of a partition $\A_1\sqcup \cdots \sqcup \A_k$ of $\A$ into $k\ge 3$ subsets; 
an assignment of multiplicities $\bm=\{m_H\}_{H\in \A}$; 
and a subset $\XX\subseteq L_2(\A)$, called the base locus, such that 
the following conditions hold:
\begin{enumerate}[itemsep=2pt, topsep=-1pt]
\item  \label{m1} 
There is an integer $\ell$ such that $\sum_{H\in\A_i} m_H=\ell$, 
for all $i\in [k]$.
\item  \label{m2} 
For any two hyperplanes $H$ and $K$ in different classes,  
$H\cap K\in \XX$.
\item  \label{m3} 
For each $X\in\XX$, the sum 
$n_X:=\sum_{H\in\A_i\colon H\supset X} m_H$ is independent of $i$.
\item  \label{m4} 
For each $1\le i \le k$ and $H,K\in \A_{i}$, there is a 
sequence $H=H_0,\ldots, H_r=K$ such that $H_{j-1}\cap H_j\not\in\XX$ for
$1\le j\le r$.
\end{enumerate}

We say that a multinet $\NN$ as above is a $(k,\ell)$-multinet, or simply a $k$-multinet.    
Without essential loss of generality, we may assume that $\gcd (\bm)=1$.  
If all the multiplicities are equal to $1$, the multinet is said to be {\em reduced}. 
If, furthermore, every flat in $\XX$  is contained in precisely one 
hyperplane from each class, the multinet is called a {\em $(k,\ell)$-net}. 

For instance, a $3$-net on $\A$ is a partition into 
$3$ non-empty subsets with the 
property that, for each pair of hyperplanes $H,K\in \A$ in 
different classes, we have $H\cap K=H\cap K\cap L$, for some 
hyperplane $L$ in the third class.
As another example, if $X\in L_2(\A)$ is a $2$-flat 
of multiplicity at least $3$, we may form a net on $\A_{X}$ by 
assigning to each hyperplane $H\supset X$ the multiplicity $1$, 
putting one hyperplane in each class, and setting $\XX=\{X\}$.

Now let $f=\prod_{H\in \A} f_H$ be a defining polynomial for $\A$. 
Given a $k$-multinet $\NN$ on $\A$, 
with parts $\A_i$ and multiplicity vector $\bm$, 
write $f_{i}=\prod_{H\in\A_{i}}f_H^{m_H}$ and define a rational map 
$\psi \colon \C^3\to\CP^1$ by $\psi(x)=\pcoor{f_1(x) : f_2(x)}$. 
There is then a set $D=\{ \pcoor{a_1 : b_1}, \dots , \pcoor{a_k : b_k}\}$ 
of $k$ distinct points in $\CP^1$
such that each of the degree $d$ polynomials $f_1,\dots, f_k$ 
can be written as $f_{i}=a_{i} f_2-b_{i}f_1$, and, 
furthermore, the image of $\psi\colon M(\A)\to\CP^1$ misses $D$, 
see  \cite{FY}.  The corestriction $\psi \colon M(\A) \to \CP^1\mysetminus D$, 
then, is the {\em pencil}\/ associated to the multinet $\NN$. 
Following \cite{PS-plms17, Su-toul}, we may describe the homomorphism 
induced in homology by this pencil, as follows. Let $\alpha_1,\dots ,\alpha_k$ 
be compatibly oriented simple closed curves on $S=\CP^1\mysetminus D$, going 
around the points of $D$, so that $H_1(S;\Z)$ is generated by the homology 
classes $c_{i}=[\alpha_{i}]$, subject to the single relation $\sum_{i=1}^k c_{i}=0$.   
Then the induced homomorphism $\psi_{*} \colon H_1(M;\Z) \to H_1(S;\Z)$ 
is given by $\psi_*(x_H) = m_H c_{i}$ for $H\in \A_{i}$, 
and thus $\psi^{*} \colon H^1(S;\Z) \to H^1(M;\Z)$ 
is given by $\psi^*(c_{i}^{\vee}) = u_i$, where 
$c_{i}^{\vee}$ is the Kronecker dual of $c_i$ 
and $u_i=\sum_{H\in \A_{i}} m_H e_H$.

It follows from the above discussion that the map 
$\psi^{*} \colon H^1(S;\C) \to H^1(M;\C)$ 
is injective, and thus sends $\RR^1_1(S)$ to $\RR^1_1(M)$.
Let us identify $\RR^1_1(S)$ with $H^1(S;\C)=\C^{k-1}$, and view 
$P_{\NN}\coloneqq \psi^*(H^1(S;\C))$ as lying inside 
$\RR_1(\A)\coloneqq \RR^1_1(M)$.  
Then $P_{\NN}$ is the $(k-1)$-dimensional linear 
subspace spanned by the vectors $u_2-u_1,\dots , u_k-u_1$.  
Moreover, as shown in \cite[Thms.~2.4--2.5]{FY}, this subspace is 
an essential component of $\RR_1(\A)$; that is, $P_{\NN}$ is not 
contained in any proper coordinate subspace of $H^1(M;\C)$.
More generally, suppose there is a sub-arrangement 
$\B\subseteq \A$ supporting a multinet $\NN$.   In this case, 
the inclusion $M(\A) \inj M(\B)$ induces a 
monomorphism $H^1(M(\B);\C) \inj H^1(M(\A);\C)$, 
which restricts to an embedding $\RR_1(\B) \inj \RR_1(\A)$.  
The linear space $P_{\NN}$, then, 
lies inside $\RR_1(\B)$, and thus, inside $\RR_1(\A)$.
Conversely, as shown in \cite[Thm.~2.5]{FY} 
all (positive-dimensional) irreducible components 
of $\RR_1(\A)$ arise in this fashion. 

\subsection{Characteristic varieties}
\label{subsec:cv}

Let $X$ be a connected, finite-type CW-complex. Fix a basepoint 
$x_0$ at a $0$-cell; then the fundamental group $G=\pi_1(X,x_0)$ 
is a finitely generated (in fact, finitely presented) group. Therefore, 
the group $\T_G=\Hom(G,\C^*)$ of $\C$-valued, multiplicative 
characters on $G$ is an affine, commutative algebraic group, which 
we will identify with $H^1(X;\C^*)$. Its identity $\bo$ is the trivial 
representation $g\mapsto 1\in \C^*$; the connected component 
of $G$ containing the identity, $\T_G^0$, is an algebraic torus isomorphic 
to $(\C^*)^n$, where $n=b_1(G)$. Moreover, $\T_G/\T_G^0$ 
is in bijection with the finite abelian group $\Tors (G_{\ab})$. 

The  {\em characteristic varieties}\/ of $X$ (in degree $q$ and depth $s$, 
where $q,s\ge 0$) are the jump loci for homology with coefficients in rank-$1$ 
local systems on $X$:
\begin{equation}
\label{eq:cvs}
\V^q_s(X)=\big\{ \rho\in H^1(X;\C^*)  \mid  \dim_{\C} H_q(X;\C_\rho)\ge s\big\}.
\end{equation}

Here $\C_{\rho}=\C$, with $\C[G]$-module structure defined by the 
character $\rho\colon G\to \C^*$ by setting $g\cdot z\coloneqq \rho(g) c$ for 
$g\in G$ and $z\in \C$, while $H_*(X;\C_\rho)$ denotes the homology 
of the chain complex $C_*(\widetilde{X};\C)\otimes_{\C[G]} \C_{\rho}$, where 
$C_*(\widetilde{X};\C)$ is the $G$-equivariant chain complex of the universal 
cover of $X$, with coefficients in $\C$. 

The sets $\V^q_s(X)$ are Zariski-closed subsets of the character group. 
We will denote by $\WW^q_s(X)$ the intersection of $\VV^q_s(X)$ with $\T_G^0$. 
Observe that the (degree $q$) depth of a character $\rho$, defined as 
$\depth_q(\rho)\coloneqq \dim_{\C} H_q(X; \C_{\rho})$, is equal to 
$\max\{s \mid \rho\in \VV^q_s(X)\}$; in particular, $\depth_q(\bo)=b_q(X)$, 
the $q$-th Betti number of $X$. Note also that $\V^0_1(X)=\{\bo\}$ and 
$\V^0_s(X)=\emptyset$ if $s>1$, while $\V^q_0(X)= H^1(X;\C^*)$ for all $q\ge 0$.

Completely analogous definitions work for the characteristic varieties 
$\V^q_s(X,\k)$, viewed as subsets of $H^1(X;\k^*)$, for any field $\k$.

\begin{example}
\label{ex:cv-surfaces}
Let $\Sigma_{g,n}$ be a Riemann surface of genus $g$ with $n$ punctures 
($g,n\ge 0$), and let $\chi\coloneqq \chi(\Sigma_{g,n})=2-2g-n$ be its Euler 
characteristic. Then $\V^1_s(\Sigma_{g,n})$ is equal to $H^1(\Sigma_{g,n};\C^*)$ if 
$s\le -\chi$ and it is contained in $\{\bo\}$, otherwise. 
\end{example}

The characteristic varieties $\VV^1_s(X)$ depend only on the fundamental group 
$G=\pi_1(X)$; thus, we will often denote them by $\VV^1_s(G)$. At least away 
from the trivial character, $\VV^1_s(G)$ is the zero set of the ideal 
$\ann (\bwedge^s G'/G''\otimes \C)$, where the 
$\Z{G_{\ab}}$-module structure on the group $G'/G''$ 
arises from the short exact sequence 
$1\to  G'/G'' \to G/G'' \to G'/G'' \to 1$; see, e.g., \cite{Su-abexact} 
and references therein. 
Therefore, the characteristic varieties $\VV^1_s(G)$ 
of a finitely generated group $G$ depend only on its 
maximal metabelian quotient, $G/G''$. 

\subsection{Homology of finite abelian covers}
\label{subsec:hom-cov}
The characteristic varieties control the Betti numbers of regular, 
connected, finite abelian covers $p\colon Y\to X$.  
For instance, suppose that the deck-trans\-formation group is 
cyclic of order $N$. Then the cover is determined by an 
epimorphism $\chi \colon G\surj \Z_N$, so that 
$\ker(\chi)=\im(p_{\sharp})$. 
Fix an inclusion $\iota \colon \Z_N \inj \C^*$, 
by sending $1$ to $e^{2\pi \ii/N}$. With this choice, the 
map $\chi$ yields a torsion character, 
$\rho=\iota\circ \chi\colon G\to \C^*$. 
Since $\chi$ is surjective, the induced morphism between character groups, 
$\chi^*\colon \T_{\Z_N}\to \T_{G}$, is injective, 
and so $\im(\chi^*)\cong \Z_N$. Furthermore, 
if $\xi\colon G\to \C^*$ is a non-trivial character belonging to 
$\im(\chi^*)$, then $\xi=\rho^{N/k}$ for some positive 
integer $k$ dividing $N$. 

Now view the homology groups $H_q(Y;\C)$ 
as modules over the group algebra $\C[\Z_N]\cong \C[t]/(t^N-1)$. 
By a transfer argument, the invariant submodule, 
$H_q(Y;\C) ^{\Z_N}$, is isomorphic to the trivial module
$H_q(X;\C)\cong (\C[t]/(t-1))^{b_q(X)}$. In fact, 
a result proved in various levels of generality 
in \cite{Li92, Sa95, Hi97, MS02, DeS-plms} yields  
isomorphisms of $\C[\Z_N]$-modules, 
\begin{equation}
\label{eq:decomp}
\begin{aligned}
H_q(Y;\C) &\cong 
\bigoplus_{s\ge 1}\bigoplus_{\xi\in \im(\chi^*) 
\cap  \V_s^q(X)} \C_{\xi}\\
&\cong H_q(X;\C) \oplus 
\bigoplus_{1<k\mid N} \big(\C[t]/\Phi_k(t)\big)^{\depth_q(\rho^{N/k})}, 
\end{aligned}
\end{equation}
where $\Phi_k(t)$ is the $k$-th cyclotomic polynomial. 
Consequently, 
\begin{gather}
\label{eq:betti-cover}
\begin{aligned}
b_q(Y)&=\sum_{s\ge 1}\abs{\im(\chi^*) \cap  \V_s^q(X)}\\
&=b_q(X)+ \sum_{1<k\mid N} \varphi(k) \cdot \depth_q(\rho^{N/k}),
\end{aligned}
\end{gather}
where $\varphi(k)=\deg \Phi_k(t)$ is the Euler totient function. 
Moreover, if $h\colon Y \to Y$ is the deck transformation  
corresponding to the generator $1\in \Z_N$, 
then the characteristic polynomial $\Delta_q(t)=\det (t \cdot \id - h_*)$ 
of the induced automorphism $h_*\colon H_q(Y;\C)\to H_q(Y;\C)$ 
is given by
\begin{equation}
\label{eq:cp-cover}
\Delta_q(t) =   (t-1)^{b_q(X)}  \cdot \prod_{1<k\mid N} 
\Phi_k(t)^{ \depth_q(\rho^{N/k}) }.
\end{equation}

\subsection{Characteristic varieties of arrangements}
\label{subsec:cv-arr}
Let $M$ be a smooth, quasi-projective variety. 
A general result of Arapura \cite{Ar} (as refined in 
\cite{BW}), insures that the characteristic varieties $\VV^q_s(M)$ are 
finite unions of torsion-translated subtori of the character torus.  In degree 
$q=1$, these varieties can be described more precisely, as follows.

Let $S=(\Sigma_{g,r},\mu)$ be a Riemann surface of genus 
$g\ge 0$, with $r\ge 0$ points removed (so that 
$\Sigma_{g,0}=\Sigma_g$), and with $h\ge 0$ marked points,
$(p_1,\mu_1),\dots, (p_h,\mu_{h})$, with $\mu_i\ge 2$.  A surjective, 
holomorphic map $\psi\colon M\to \Sigma_{g,n}$ is called an 
{\em orbifold fibration}\/ (or, a {\em pencil}) if the fiber 
over any non-marked point is connected, the multiplicity of each fiber 
$\psi^{-1}(p_i)$ is equal to $\mu_i$,  
and $\psi$ has an extension to the respective compactifications, 
$\bar{\psi}\colon \overline{M}\to \Sigma_{g}$, which is 
also a surjective, holomorphic map with connected 
generic fibers.  Then each positive-dimensional component 
of $\VV^1_1(M)$ is of the form $T=\psi^*(H^1(S;\C^*))$, for some pencil 
$\psi\colon M\to S$ for which the orbifold Euler characteristic of the surface,  
$\chi^{\orb}(\Sigma_{g,r}, \mu)\coloneqq 
\chi(\Sigma_{g,r})-\sum_{i=1}^{h} (1-1/\mu_i)$, is negative. 

The following result of Artal Bartolo, Cogolludo, and Matei 
(\cite[Prop. 6.9]{ACM}) helps locate characters that lie in the higher-depth 
characteristic varieties.

\begin{theorem}[\cite{ACM}]
\label{thm:acm}
Let $M$ be a smooth, quasi-projective variety.
Suppose $T_1$ and $T_2$ are two distinct, positive-dimensional 
irreducible components of $\VV^1_r(M)$ and $\VV^1_s(M)$, 
respectively.  If $\xi\in T_1\cap T_2$ is a torsion character, 
then $\xi \in \VV^1_{r+s} (M)$. 
\end{theorem}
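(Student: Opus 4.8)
The plan is to reduce, via the structure theory of characteristic varieties of quasi-projective manifolds, to a statement about twisted cohomology classes pulled back from two orbifold pencils on $M$, and then to prove that these two pullbacks contribute \emph{independently} to $H^1(M;\C_\xi)$.

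\textbf{Reduction to orbifold pencils.} I would treat first the principal case in which $T_1$ and $T_2$ are each the subtorus of a single orbifold pencil; the general case --- where a component of $\VV^1_r$ may arise as an intersection of several such subtori --- should reduce to this one by descending induction on the depth. In the principal case, by Arapura's theorem \cite{Ar}, as refined by Budur--Wang \cite{BW}, each of $T_1,T_2$ is a torsion-translated subtorus whose defining datum is an orbifold pencil. Since $\xi\in T_i$ is torsion, I would absorb the translation into the orbifold structure at the base: there is an orbifold fibration $\psi_i\colon M\to S_i$ onto a $1$-dimensional orbifold $S_i$ with $\chi^{\orb}(S_i)<0$, together with a torsion character $\eta_i$ of its orbifold fundamental group, such that $T_i=\psi_i^*\big(\Hom(\pi_1^{\orb}(S_i),\C^*)\big)$ and $\xi=\psi_i^*(\eta_i)$. (The torsion hypothesis on $\xi$ is essential here: a generic, non-torsion point of $T_i$ need not be the pullback of a torsion character of any orbifold curve, and this repackaging would break down.) Because $\psi_i$ is a pencil, $\psi_{i\sharp}\colon\pi_1(M)\surj\pi_1^{\orb}(S_i)$ is onto, so the inflation map $\psi_i^*\colon H^1\big(\pi_1^{\orb}(S_i);\C_{\eta_i}\big)\to H^1(M;\C_\xi)$ is injective; as a hyperbolic $1$-orbifold is aspherical, the source is $H^1(S_i;\C_{\eta_i})$. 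Thus $P_i\coloneqq\psi_i^*\big(H^1(S_i;\C_{\eta_i})\big)$ is a subspace of $H^1(M;\C_\xi)$ of dimension $\dim_\C H^1(S_i;\C_{\eta_i})$.

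\textbf{The dimension estimate.} Since $T_i=\psi_i^*\big(\Hom(\pi_1^{\orb}(S_i),\C^*)\big)$ is contained in $\VV^1_r(M)$ (resp.\ $\VV^1_s(M)$), the depth of a \emph{generic} character of $T_i$ is at least $r$ (resp.\ $s$); and that generic depth equals the generic value of $\dim_\C H^1(S_i;\C_\eta)$ as $\eta$ ranges over the character group of $\pi_1^{\orb}(S_i)$. A direct computation with the presentation of $\pi_1^{\orb}(S_i)$ shows that this generic value is the \emph{minimum} of $\dim_\C H^1(S_i;\C_\eta)$ over nontrivial characters $\eta$ (the twisted first Betti number can only be larger at special, e.g.\ torsion, characters). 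Since $\eta_i$ is nontrivial --- as $\xi=\psi_i^*(\eta_i)\ne\bo$ and $\psi_i^*$ is injective --- we obtain $\dim_\C P_1\ge r$ and $\dim_\C P_2\ge s$, so that
\[
\depth_1(\xi)=\dim_\C H^1(M;\C_\xi)\ \ge\ \dim_\C(P_1+P_2),
\]
and it remains to show that the sum $P_1+P_2$ is direct: then $\depth_1(\xi)\ge\dim_\C P_1+\dim_\C P_2\ge r+s$, which is the assertion.

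\textbf{Independence --- the main obstacle.} The directness of $P_1+P_2$ is where the hypothesis $T_1\ne T_2$ must be brought in, and I expect this step to be the real work. Suppose $0\ne\omega\in P_1\cap P_2$, so $\omega=\psi_1^*(\alpha_1)=\psi_2^*(\alpha_2)$ with $\alpha_i\in H^1(S_i;\C_{\eta_i})$ nonzero. Analyzing the two surjections $\psi_{1\sharp},\psi_{2\sharp}$ via Goursat's lemma, a class lying in both $\psi_1^*H^1\big(\pi_1^{\orb}(S_1)\big)$ and $\psi_2^*H^1\big(\pi_1^{\orb}(S_2)\big)$ must be inflated from a common quotient $Q$ of $\pi_1^{\orb}(S_1)$ and $\pi_1^{\orb}(S_2)$ through which both $\psi_{i\sharp}$ factor; since $\omega$ is nonzero and twisted, $Q$ must be infinite, hence itself an orbifold surface group, and both $\psi_i$ then factor through a single orbifold pencil $M\to C_Q$. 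But then $T_1$ and $T_2$ both lie in the component of $\VV^1_1(M)$ determined by $M\to C_Q$, and distinct irreducible components cannot be nested, so $T_1=T_2$ --- a contradiction. An equivalent route passes to the regular cyclic cover $p\colon N\to M$ of order $\ord(\xi)$ trivializing $\xi$: there $\C_\xi$ is an isotypic summand of $p_*\C_N$, the decomposition \eqref{eq:decomp} exhibits $\depth_1(\xi)$ as the dimension of a prescribed isotypic component of $H^1(N;\C)$, the two pencils descend to pencils on $N$ whose defining subtori pass through the identity, and one invokes the transversality of the resonance subspaces attached to two distinct pencils on a quasi-projective variety (keeping track of the deck-group action). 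Making either version precise --- the rigidity of twisted pullbacks under distinct orbifold pencils, together with the bookkeeping of orbifold structures, depths, and isotypic components, and the reduction of the general, intersection-type component to the principal case --- is the technical heart of the argument.
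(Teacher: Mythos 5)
The paper does not actually prove this statement: it is quoted as \cite[Prop.~6.9]{ACM}, with no proof reproduced. So there is no in-paper argument to compare yours against, and I can only evaluate the proposal on its own terms relative to the structure theory it invokes.

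Your overall strategy (realize $T_1,T_2$ via orbifold pencils $\psi_i\colon M\to S_i$, inflate $H^1(\pi_1^{\orb}(S_i);\C_{\eta_i})$ into $H^1(M;\C_\xi)$, estimate the dimensions of the two images $P_1,P_2$ from the generic depth along $T_i$ plus upper semicontinuity, and then show $P_1\cap P_2=0$) is in the right circle of ideas for \cite{ACM}. But there are two genuine gaps. First, the opening reduction to the ``principal case'' is asserted, not proved: an irreducible component of $\VV^1_s(M)$ for $s>1$ need not be the full pencil subtorus $\psi^*\bigl(\Hom(\pi_1^{\orb}(S),\C^*)\bigr)$ — it may be a translated subtorus properly contained in one, and the way depth-$s$ loci arise from pencils and from \emph{intersections} of pencil subtori is exactly what this theorem governs, so a ``descending induction on depth'' risks being circular; this step needs a concrete argument. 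Second, the Goursat-based independence step does not close. Because $\xi$ is trivial on each $K_i=\ker\psi_{i\sharp}$, a nonzero $\omega\in P_1\cap P_2$ does force $\omega$ to be inflated from $Q=G/(K_1K_2)$, which is then infinite; but note the direction of the factorization is the opposite of what you wrote — $G\to Q$ factors through each $\psi_{i\sharp}$, so $Q$ is a \emph{quotient} of each $\Gamma_i$, and the $\psi_i$ do not ``factor through $M\to C_Q$.'' What you actually obtain is that the two full pencil tori $\psi_i^*\bigl(\Hom(\Gamma_i,\C^*)\bigr)$ share the positive-dimensional subtorus $q^*(\Hom(Q,\C^*))$. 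This neither nests $T_1$ in $T_2$ nor conversely, so the conclusion ``distinct irreducible components cannot be nested, hence $T_1=T_2$'' is a non sequitur. To finish, one needs the transversality of distinct orbifold pencils on a quasi-projective manifold (distinct pencils give subtori that meet only in a finite set of torsion characters). You name this ingredient in the ``equivalent route'' via the cyclic cover $N\to M$ — which is, I believe, closer to how one actually proves this — but it never enters the main argument. A small slip in that paragraph: the pencils \emph{pull back} (or lift) to the cover $N$, they do not ``descend'' to it.
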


Now let $\A$ be an arrangement of $n$ hyperplanes in $\C^{d+1}$, with 
complement $M=M(\A)$. The characteristic varieties $\VV^q_s(M)$ 
are subsets of the character torus $H^1(M;\C^*)=(\C^*)^n$.  
Moreover, the tangent cone at the identity $\bo$ to $\VV^q_s(M)$ coincides 
with the resonance variety $\RR^q_s(M)$, for each $q, s\ge 1$. This 
``Tangent Cone Theorem" (which does not hold for all quasi-projective manifolds)
relies in an essential way on the formality of the arrangement complement, and 
was proved in \cite{CS99, LY00, DPS-duke, DP-ccm} in various levels 
of generality.  Let $\exp\colon H^1(M;\C)\to H^1(M;\C^*)$ be the 
coefficient homomorphism induced by the exponential map $\C\to \C^*$. 
Then, if $P\subset H^1(M;\C)$ is one of the linear 
subspaces comprising $\RR^q_s(M)$, its image under 
the exponential map, $\exp(P)\subset H^1(M;\C^*)$, 
is one of the subtori comprising $\VV^q_s(M)$. Furthermore, 
the correspondence $P\leadsto T=\exp(P)$ gives  
a bijection between the components of $\RR^q_s(M)$ 
and the components of $\V^q_s(M)$ passing through $\bo$, which  
in turn yields an identification $\TC_{\bo}(\V^q_s(M))=\RR^q_s(M)$ 
 for each $q, s\ge 1$.

In degree $q=1$, each positive-dimensional 
component of $\VV^1_1(M)$ that passes through $\bo$ is of the form 
$T=\psi^*(H^1(S;\C^*))$, for some pencil 
$\psi\colon M\to S=\CP^{1}\mysetminus \{\text{$k$ points}\}$ with $k\ge 3$. 
An easy computation shows that 
$\VV^1_s(S)=H^1(S;\C^*)=(\C^*)^{k-1}$ for all 
$s \le k-2$.  Hence, the subtorus $T$ is a 
$(k-1)$-dimensional component of $\VV^1_1(M)$ that contains $\bo$ 
and lies inside $\VV^1_{k-2}(M)$. 

\subsection{Torsion-translated subtori}
\label{subsec:tt-subtori}
Let $(\Sigma_{g,r},\mu)$ be a $2$-dimensional orbifold as above.
For our purposes here we may assume $r\ge 1$, in which case the orbifold 
fundamental group $\Gamma\coloneqq \pi_1^{\orb}(\Sigma_{g,r}, \mu)$, 
is isomorphic to the free product $F_{n} * \Z_{\mu_1} * \cdots * \Z_{\mu_{h}}$, 
where $n=2g+r-1$. 
Note that $\Gamma_{\ab}=\Z^{n}\oplus \Lambda$, where 
$\Lambda=\Z_{\mu_1}\oplus \cdots \oplus \Z_{\mu_{\ell}}$ is 
the torsion subgroup, and each component of the character group 
$\T_{\Gamma}=\T_{\Gamma}^{0} \times \T_{\Lambda}$ 
is of the form $\lambda\cdot \T_{\Gamma}^0$ 
for some $\lambda=(\lambda_{1}\dots, \lambda_{h})\in \T_{\Lambda}$. 
Let $\ell(\lambda)=\abs{\{i\colon \lambda_i\ne 1\}}$. 
A computation detailed in \cite[Prop.~2.10]{ACM} shows that 
\begin{equation}
\label{eq:v1piorb}
\V^1_s(\Gamma)=\begin{cases}
\T_{\Gamma} & 
\text{if $s\le n-1$},
\\[2pt]
(\T_{\Gamma} \mysetminus \T_{\Gamma}^0) \cup \{\bo\}  
& \text{if $s= n$},
\\[2pt]
\bigcup_{\ell(\lambda) \ge n-s+1}  \lambda\cdot \T_{\Gamma}^0 
& \text{if $n<s<n+h$},
\end{cases}
\end{equation}
and is empty if $s\ge n+h$. 

Now suppose $M$ is a smooth, quasi-projective variety, and 
$\psi\colon M\to (\Sigma_{g,r},\mu)$ is an orbifold pencil 
with either $n\ge 2$, or $n=1$ and $h>0$. 
Since the generic fiber of $\psi$ is connected, the induced homomorphism 
on orbifold fundamental groups, $\psi_{\sharp}\colon G=\pi_1(M)\to 
\Gamma=\pi_1^{\orb} (\Sigma_{g,r},\mu)$, is surjective. Therefore, the induced 
morphism $\psi_{\sharp}^* \colon \T_{\Gamma}\to \T_{G}$ 
embeds $\V^1_s(\Gamma)$---as computed 
in \eqref{eq:v1piorb}---into $\V^1_s(M)$, for all $s\ge 1$.  In particular, if 
$\psi\colon M\to (\C^*,m)$ is an orbifold pencil with a single multiple 
fiber of multiplicity $m\ge 2$, then 
there is a $1$-dimensional algebraic subtorus $T\subset H^1(M;\C^*)$ 
and a torsion character $\rho\notin T$ such that 
$\VV^1_1(M)$ contains the translated tori $\rho T,\dots, \rho^{m-1} T$.  

As shown in \cite{Su02}, the (degree $1$, depth $1$) characteristic 
variety of an arrangement complement may have irreducible components 
that do not pass through the origin (see Section \ref{subsec:del-b3}). 
A combinatorial machine for producing translated subtori in the characteristic 
varieties of certain arrangements was given in \cite{DeS-plms}. 
Namely, suppose $\A$ admits a {\em pointed multinet}, 
that is, a multinet $\NN$ and a hyperplane $H\in \A$ 
for which $m_H>1$, and $m_H \mid n_X$ 
for each flat $X$ in the base locus such that $X\subset H$.  
Letting $\A'=\A\mysetminus \{H\}$ 
be the deletion of $\A$ with respect to $H$, it turns out 
that $\VV^1_1(M(\A'))$ has a component which is a $1$-dimensional 
subtorus of $H^1(M(\A');\C^*)$, translated by a character of order $m_H$.   
Whether all positive-dimensional translated subtori in the (degree $1$, 
depth $1$) characteristic varieties of arrangements occur in this fashion 
is an open problem. It is also an open problem whether the isolated 
(torsion) points in the characteristic varieties of an arrangement 
are combinatorially determined.

\subsection{Cohomology jump loci of the projectivized complement}
\label{subsec:cjl-U}
Once again, let $\A$ be a (central) hyperplane arrangement in $\C^{d+1}$. 
The next result relates the cohomology jump loci of the complement $M=M(\A)$ 
to those of the projectivized complement, $U=\P(M)$. A more precise relationship 
in degrees $q>1$ will be given in Corollary \ref{cor:cjl-MU-bis}.

\begin{prop}
\label{prop:cjl-MU}
Let $\pi\colon M\to U$ be the restriction of the Hopf map,
 $\pi\colon \C^{d+1}\mysetminus \set{\bz} \to \CP^{d}$, to 
the complement of $\A$, and set $n=\abs{\A}$. Then,
\begin{enumerate}[itemsep=2pt, topsep=-1pt]
\item \label{mu1}
The induced homomorphism $\pi^*\colon H^1(U;\C)\inj H^1(M;\C)$ restricts 
to isomorphisms $\RR^1_s(U)\isom \RR^1_{s}(M)$ for all $1\le s<n$ and 
$\RR^q_1(U)\cup \RR^{q-1}_1(U)\isom \RR^q_1(M)$ for all $q\ge 1$.
\item \label{mu2}
The induced morphism $\pi^*\colon H^1(U;\C^*)\inj H^1(M;\C^*)$ 
restricts to isomorphisms  $\VV^1_s(U)\isom \VV^1_{s}(M)$ for all $1\le s<n$ and 
$\VV^q_1(U)\cup \VV^{q-1}_1(U)\isom \VV^q_1(M)$ for all $q\ge 1$.
\end{enumerate}
\end{prop}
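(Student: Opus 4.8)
The plan is to exploit the product decomposition $M\cong U\times\C^*$ established in \S\ref{subsec:hyp arr}, under which $\pi$ becomes projection onto the first factor and, by the discussion in \S\ref{subsec:pi1}, $\pi_1(M)=\pi_1(U)\times\Z$ with the $\Z$ factor central and generated by the product of the meridians. Both families of jump loci then reduce, via Künneth-type formulas, to the invariants of $U$ together with the essentially trivial invariants of $\C^*$, and the two assertions are obtained by comparing depths on the two sides.

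For part \eqref{mu1}: the Künneth formula gives $H^*(M;\C)\cong H^*(U;\C)\otimes\Lambda[\epsilon]$ with $\deg\epsilon=1$, and in degree $1$ the map $\pi^*$ is the inclusion $H^1(U;\C)\inj H^1(U;\C)\oplus\C\epsilon$. Writing $a=\bar a+c\,\epsilon$ and applying the graded Leibniz rule, the cochain complex $(H^*(M;\C),\delta_a)$ is the tensor product of $(H^*(U;\C),\delta_{\bar a})$ with $(\Lambda[\epsilon],\delta_{c\epsilon})$; the second factor is acyclic when $c\ne 0$ and equals $H^*(\C^*;\C)$ with zero differential when $c=0$. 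By Künneth, $b_q(M,a)=0$ whenever $a\notin\pi^*H^1(U;\C)$, while $b_q(M,\pi^*\bar a)=b_q(U,\bar a)+b_{q-1}(U,\bar a)$. Taking $s=1$ yields $\RR^q_1(M)=\pi^*\!\big(\RR^q_1(U)\cup\RR^{q-1}_1(U)\big)$ for every $q\ge 1$; restricting to $q=1$, where $b_0(U,\bar a)$ equals $1$ for $\bar a=\bz$ and $0$ otherwise, yields $\RR^1_s(M)=\pi^*\RR^1_s(U)$, the only point to check at the origin being the Betti-number comparison $b_1(M)=b_1(U)+1$ coming from \eqref{eq:betti-M}.

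For part \eqref{mu2}: via $\pi_1(M)=\pi_1(U)\times\Z$, a character $\rho\in H^1(M;\C^*)$ is a pair $(\bar\rho,z)$ with $\bar\rho\in H^1(U;\C^*)$ and $z\in\C^*=H^1(\C^*;\C^*)$, and $\pi^*$ identifies $H^1(U;\C^*)$ with the slice $\{z=1\}$. Since the universal cover of a product is the product of universal covers and $\C[\pi_1(M)]=\C[\pi_1(U)]\otimes_\C\C[\Z]$, the twisted chain complex $C_*(\widetilde{M};\C)\otimes_{\C[\pi_1(M)]}\C_\rho$ is the tensor product of $C_*(\widetilde{U};\C)\otimes_{\C[\pi_1(U)]}\C_{\bar\rho}$ with $C_*(\widetilde{\C^*};\C)\otimes_{\C[\Z]}\C_z$. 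The second factor computes $H_*(\C^*;\C_z)$, which vanishes in all degrees if $z\ne 1$ and is $\C$ in degrees $0$ and $1$ if $z=1$. Hence $H_q(M;\C_\rho)=0$ for $\rho\notin\pi^*H^1(U;\C^*)$, and $\depth_q(\pi^*\bar\rho)=\depth_q(\bar\rho)+\depth_{q-1}(\bar\rho)$ otherwise. Exactly as above, this gives $\VV^q_1(M)=\pi^*\!\big(\VV^q_1(U)\cup\VV^{q-1}_1(U)\big)$ for all $q\ge 1$ and $\VV^1_s(M)=\pi^*\VV^1_s(U)$ for all $s\ge 1$, the behaviour at $\bo$ again being governed by $b_1(M)=b_1(U)+1$.

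The step I expect to be the main obstacle is verifying that the Künneth decomposition of the twisted chain complex of $M$ is \emph{compatible with the character}: one must check that the $\C[\pi_1(M)]$-module $\C_\rho$ factors as $\C_{\bar\rho}\otimes_\C\C_z$ over $\C[\pi_1(U)]\otimes_\C\C[\Z]$, which is precisely where one uses that the central $\Z$ splits off as a direct factor rather than merely as a normal subgroup; the analogous point on the resonance side is the factorization of the differential $\delta_a$ along the tensor decomposition of $H^*(M;\C)$. Once these are in place, the remainder is routine Künneth bookkeeping, together with the elementary observation that the twisted homology of $\C^*$, respectively the complex $(\Lambda[\epsilon],\delta_{c\epsilon})$, is supported only at the trivial character, respectively at $c=0$.
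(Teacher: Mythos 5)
Your proposal follows essentially the same route as the paper: pass to the product decomposition $M\cong U\times\C^*$ via the trivialized Hopf bundle so $\pi$ becomes $\pr_1$, then apply K\"unneth to the cochain complex $(H^*(M;\C),\delta_a)$ for resonance and to the twisted chain complex $C_*(\widetilde M;\C)\otimes_{\C[\pi_1(M)]}\C_\rho$ for the characteristic varieties, using that the $\C^*$ factor contributes nontrivial twisted (co)homology only at the trivial character and then only in degrees $0$ and $1$. The compatibility of the character with the tensor decomposition that you flag as the main obstacle is indeed the crux, and it is exactly the point the paper addresses, identifying $\Hom(\pi_1(U)\times\Z,\C^*)$ with $\Hom(\pi_1(U),\C^*)\times\C^*$ and decomposing $C_*(\widetilde{U\times\C^*};\C_\rho)\cong C_*(\widetilde U;\C_{\rho_1})\otimes_\C C_*(\widetilde{\C^*};\C_{\rho_2})$; since $\Z$ splits off as a direct factor of $\pi_1(M)$, this factorization is immediate.
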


\begin{proof}
As noted previously, upon fixing a hyperplane $H_0\in \A$, 
the restriction to $M=M(\A)$ of the (trivial) bundle map 
$\pi\colon \C^{d+1}\mysetminus H_0\to \CP^{d} \mysetminus \pi(H_0)$  
yields a diffeomorphism $M\isom  U\times \C^*$ so that the following 
diagram commutes,
\begin{equation}
\label{eq:triangle}
\begin{tikzcd}[column sep=14pt, column sep=10pt]
M \ar[rr, "\simeq"] \ar[dr, "\pi"']&&U\times \C^*\ar[dl, "\pr_1"]
\\
&\, U .
\end{tikzcd}
\end{equation}
Thus, we may replace in the argument the map $\pi\colon M\surj U$ by the 
first-coordinate projection map $\pr_1\colon U\times \C^*\to U$.  
At this stage, the claims in depth $s=1$ follow from the product formulas 
for cohomology jump loci from \cite[Prop.~13.1]{PS-plms10}. For completeness, 
we provide a full argument, which works in all cases.

For part \eqref{mu1}, consider the cohomology algebras 
$A=H^*(U\times \C^*;\C)$,  $A_1=H^*(U;\C)$, and $A_2=H^*(\C^*;\C)$, and 
let $a = (a_1, a_2)$ be an element in $A^1=A_1^1\oplus A_2^1$. By the 
K\"{u}nneth formula, the cochain complex $(A,\delta_a)$ splits as a tensor 
product of cochain complexes, $(A_1,\delta_{a_1})\otimes_{\C} (A_2,\delta_{a_2})$. 
Therefore, 
\begin{equation}
\label{eq:bettiAomoto}
b_q(A,a)=\sum_{i+j=q} b_i(A_1,a_1)b_j(A_2,a_2).
\end{equation}
Clearly, $b_0(A_2,0)=b_1(A_2,0)=1$ and $b_j(A_2,a_2)=0$ otherwise.
Therefore, 
\begin{equation}
\label{eq:betti-bis}
b_q(A,(a_1,a_2))=\begin{cases}
b_q(A_1, a_1)+b_{q-1}(A_1, a_1) &\text{if $a_2=0$,}
\\
0 &\text{if $a_2\ne 0$}.
\end{cases}
\end{equation}
In particular, $b_1(A,(a_1,0))=b_1(A_1,a_1)$ if $a_1\ne 0$ and 
$b_1(A,\bz)=b_1(A_1,\bz)+1$. The first claim follows at once 
from these formulas.

For part \eqref{mu2}, let us identify $G=\pi_1(U\times \C^*)$ with 
$\pi_1(U)\times \Z$ and the universal cover of $U\times \C^*$ 
with $\widetilde{U}\times \C$. We then have a $G$-equivariant isomorphism 
of chain complexes, $C_{*}(\widetilde{U\times \C^*})\cong  
C_{*}(\widetilde{U})\otimes_{\C}C_{*}(\C)$. 
Given a character $\rho=(\rho_1,\rho_2)$ in 
$\Hom(G,\C^*)\cong \Hom(\pi_1(U),\C^*)
\times \C^*$, we obtain an isomorphism  
$C_{*}(U\times \C^*,\C_{\rho}) \cong C_{*}(U,\C_{\rho_1})
\otimes_{\C} C_{*}(\C^*,\C_{\rho_2})$. 
Therefore, $H_q(U\times \C^*;\C_{\rho})\cong \bigoplus_{i+j=q} 
H_{i}(U;\C_{\rho_1}) \otimes_{\C} H_{j}(\C^*;\C_{\rho_2})$, 
and the second claim follows from the fact that 
$H_{0}(\C^*;\C)=H_{1}(\C^*;\C)=\C$ and $H_{j}(\C^*;\C_{\rho_2})=0$, 
otherwise.
\end{proof}

Now fix an ordering $H_1,\dots, H_n$ of the hyperplanes in $\A$ and set $H_0=H_n$. 
Then $H^1(M;\C^*)$ may be identified with $(\C^*)^n$, with coordinates $t=(t_1,\dots,t_n)$ and 
$H^1(U;\C^*)$ may be identified with $(\C^*)^{n-1}$, with coordinates $(t_1,\dots,t_{n-1})$. 
The characteristic varieties of $U$ are then given by
\begin{equation}
\label{eq:vux}
\VV^q_s(U)=\{ t\in (\C^{*})^n \mid t\in \text{$\VV^q_s(M)$ 
and $t_1\cdots t_n=1$}\};
\end{equation}
that is, $\VV^q_s(U)$ is the subvariety of $(\C^{*})^n$ obtained by intersecting 
$\VV^q_s(M)$ with the subtorus $(\C^{*})^{n-1}=\set{t : t_1\cdots t_n=1}$.  
Furthermore, the induced homomorphism 
$\pi^*\colon H^1(U;\C^*) \inj H^1(M;\C^*)$ 
may be identified with the monomial map 
\begin{equation}
\label{eq:mono-map}
(\C^{*})^{n-1} \inj (\C^{*})^n, \quad 
(t_1,\dots, t_{n-1})\mapsto \big(t_1,\dots, t_{n-1}, t_1^{-1}\cdots t_{n-1}^{-1}\big). 
\end{equation} 
In turn, this map restricts to isomorphisms $\VV^1_s(U)\isom \VV^1_s(M)$ for all 
$1\le s<n$ and $\VV^q_1(U)\cup \VV^{q-1}_1(U)\isom \VV^q_1(M)$ for all $q\ge 1$, 
where, in fact, $\VV^q_1(U)\cup \VV^{q-1}_1(U)= \VV^q_1(U)$, as we shall see in Corollary 
\ref{arr:arr-prop}.

Similar considerations apply to the resonance varieties of $M$ and $U$, with 
the induced homomorphism $\pi^*\colon H^1(U;\C) \inj H^1(M;\C)$ 
being identified with the linear map 
$\C^{n-1} \inj  \C^n$, 
$(x_1,\dots, x_{n-1})\mapsto (x_1,\dots, x_{n-1}, -(x_1+\cdots + x_{n-1}))$.

\section{Milnor fibrations of arrangements}
\label{sect:mf}

\subsection{The Milnor fibration of a multi-arrangement}
\label{sect:mf-multi}
Let $\A$ be a central arrangement of $n$ hyperplanes in $\C^{d+1}$, 
and fix an ordering on $\A$.  
To each hyperplane $H\in \A$, we may associate a multiplicity $m_H\in \N$.  
This yields a multi-arrangement $(\A,\bm)$, where $\bm=(m_H)_{H\in \A} \in \N^{n}$ 
is the resulting multiplicity vector, and a homogeneous polynomial, 
\begin{equation}
\label{eq:fm}
f_{\bm}=\prod_{H\in \A} f_H^{m_H}
\end{equation}
of degree $N=\sum_{H\in \A} m_H$. Note that $f_{\bm}$ is a 
proper power if and only if $\gcd(\bm)>1$, where 
$\gcd(\bm)=\gcd(m_H\colon H\in \A)$.

The polynomial map $f_{\bm}\colon \C^{d+1} \to \C$ restricts 
to a map $f_{\bm}\colon M(\A) \to \C^{*}$.  As shown by 
Milnor \cite{Mi} in a much more general context, $f_{\bm}$ 
is the projection map of a smooth, locally trivial bundle, 
known as the {\em (global) Milnor fibration}\/ of the 
multi-arrangement $(\A,m)$, 
\begin{equation}
\label{eq:mfib}
\begin{tikzcd}[column sep=26pt]
F_{\bm} \ar[r]  & M \ar[r, "f_{\bm}"] & \C^*. 
\end{tikzcd}
\end{equation}

The typical fiber of this fibration, $f_{\bm}^{-1}(1)$, 
is a smooth manifold of dimension $2d$, called the {\em Milnor fiber}\/ 
of the multi-arrangement, denoted $F_{\bm}=F_{\bm} (\A)$. 
It is readily seen that $F_{\bm}$ is a Stein domain of complex 
dimension $d$, and thus has the homotopy type of a finite CW-complex 
of dimension at most $d$---in fact, of dimension at most $\rank(\A)-1$. 
Moreover, $F_{\bm}$ is connected if and only 
if $\gcd(\bm)=1$, a condition we will assume henceforth. 
As shown in \cite{Su-toul}, the homomorphism 
$(f_{\bm})_{\sharp} \colon \pi_1(M)\to \pi_1(\C^*)$  
induced on fundamental groups by $f_{\bm}$ is the map 
$\mu_\bm\colon \pi_1(M)\to \Z$ given by $x_H \mapsto m_H$. 
In the case when all the multiplicities $m_H$ are equal to $1$, the polynomial 
$f=f_{\bm}$ is the usual defining polynomial and  $F=F_{\bm}$ is the usual 
Milnor fiber of $\A$.

For each $\theta\in [0,1]$, let $F_{\theta}= f_{\bm}^{-1}(e^{2\pi \ii \theta})$ 
be the fiber over the point $e^{2\pi \ii \theta}\in \C^*$. 
For each $z\in M$, the path $\gamma_{\theta}\colon [0,1] \to \C^*$, 
$t\mapsto e^{2\pi \ii t \theta}$ lifts to a path 
$\tilde\gamma_{\theta,z}\colon [0,1] \to M$, 
$t\mapsto e^{2\pi \ii t \theta/N} z$ which satisfies $\tilde\gamma_{\theta,z}(0)=z$. 
Notice that $f_{\bm}(\tilde\gamma_{\theta,z}(1))= e^{2\pi \ii  \theta} f_{\bm}(z)$;  
thus, if $z\in F_{0}=F_{\bm}$, then $\tilde\gamma_{\theta,z}(1)\in F_{\theta}$. 
By definition, the {\em monodromy}\/ of the Milnor fibration 
is the diffeomorphism $h\colon F_0\to F_1$ given by 
$h(z)=\tilde\gamma_{1,z}(1)$.  In view of these observations, 
we may interpret $h$ as the self-diffeomorphism $h\colon F_{\bm}\to F_{\bm}$ of 
order $N$ given by $z \mapsto e^{2\pi \ii/N} z$, and identify the complement 
$M$ with the mapping torus of $h$. 

\subsection{The Milnor fiber as a finite cyclic cover}
\label{subsec:mf cover} 
The monodromy diffeomorphism $h\colon F_{\bm}\to F_{\bm}$ 
generates a cyclic group of order $N=\sum_{H\in \A} m_H$    
which acts freely on $F_{\bm}$. The quotient space, $F_{\bm}/\Z_N$, 
may be identified with the projective complement, $U=\P(M)$, in a manner 
such that the projection map, $\sigma_{\bm}\colon F_{\bm} \surj F_{\bm}/\Z_N$, 
coincides with the restriction of the Hopf fibration map, $\pi\colon M\surj U$, 
to the subspace $F_{\bm}$.  Letting $\iota_{\bm}\colon F_{\bm} \to M$ denote the 
inclusion map, all this information may be summarized in the diagram
\begin{equation}
\label{eq:cd-milnor}
\begin{tikzcd}[column sep=24pt, row sep=24pt]
& \C^*  \ar[d, "\upsilon"] \ar[dr, "z\mapsto z^N"]
\\
F_{\bm} \ar[r, "\iota_{\bm}"]  \ar[dr, "\sigma_{\bm}"'] 
& M \ar[r, "f_{\bm}"] \ar[d,"\pi"] & \C^* ,\\
& U
\end{tikzcd}
\end{equation}
where both the row and the column are fibrations and the diagonal arrows are 
$N$-fold cyclic covers. Consequently, the Euler characteristic of the Milnor fiber 
is given by $\chi(F_{\bm})=N\cdot \chi(U)$. Taking fundamental groups 
in \eqref{eq:cd-milnor}, we obtain the diagram
\begin{equation}
\label{eq:fmz}
\begin{tikzcd}[column sep=32pt, row sep=28pt]
&[-10pt] & \Z \ar[dr, "\cdot N"]  \arrow[hook, "\upsilon_{\sharp}"]{d} & &[-10pt]
\\
1  \ar[r] & \pi_1(F_{\bm})\ar[dr, two heads, "(\sigma_{\bm})_{\sharp}"']   \ar[r, "(\iota_{\bm})_{\sharp}"]
& \pi_1(M) \arrow[two heads, "\pi_{\sharp}"]{d} \ar[r, "\mu_\bm"] & \Z\ar[r] & 1,
\\
& & \pi_1(U)
\end{tikzcd}
\end{equation}
with exact row and column. By construction, $\sigma_{\bm}=\pi\circ \iota_{\bm}$, 
and so the lower triangle commutes. The upper triangle in \eqref{eq:fmz} also 
commutes, since $\upsilon_{\sharp}(1)$ is the product of the meridians $\gamma_H$ 
(taken in the order given by an ordering of the hyperplanes), and since $N=\sum_{H\in \A} m_H$. 
Hence, the homomorphism $\mu_{\bm}\colon \pi_1(M) \surj \Z$ descends to an 
epimorphism, 
\begin{equation}
\label{eq:chi}
\begin{tikzcd}[column sep=22pt]
\chi_{\bm}\colon \pi_1(U)\ar[r, two heads] &\Z_N, 
\end{tikzcd}
\end{equation}
given by $\overline{\gamma}_H \mapsto m_H \bmod N$.
As shown in \cite{CS95, CDS, Su-conm11, Su-toul}, the regular, 
$N$-fold cyclic cover $\sigma_{\bm}\colon F_{\bm} \to U$ is classified 
by this epimorphism. 
In particular, the usual Milnor fiber $F=F(\A)$ is classified by the 
``diagonal" homomorphism, $\chi\colon \pi_1(U)\surj \Z_n$, given by 
$\chi(\overline{\gamma}_H )=1$, for all $H\in \A$.

\subsection{The characteristic polynomial of the algebraic monodromy}
\label{subsec:cp-mono}
We now fix an ordering on the $n$ hyperplanes of $\A$, 
and identify the character group $H^1(U;\C^*)$ with $(\C^*)^{n-1}$.  
Recall we also fixed an embedding $j \colon \Z_N\inj \C^*$, $1\mapsto e^{2\pi \ii /N}$. 
By \eqref{eq:chi}, the character $\rho_{\bm} = j\circ \chi_{\bm} \colon \pi_1(U)\to \C^*$ 
is given by $\overline{\gamma}_H \mapsto e^{2\pi \ii m_H/N}$; hence, for each divisor $k$ of $N$, 
the character $\rho_{\bm}^{N/k}$ takes $\overline{\gamma}_H$ to $e^{2\pi \ii /k}$.
By formula \eqref{eq:betti-cover}, the Betti numbers of the Milnor fiber 
$F_{\bm}=F_{\bm}(\A)$ are given by 
\begin{equation}
\label{eq:betti-mf}
b_q(F_{\bm})= b_q(U) + \sum_{1< k | N} \varphi(k) \depth_q(\rho_{\bm}^{N/k}).
\end{equation}
Likewise, formula \eqref{eq:cp-cover} implies that the characteristic 
polynomial of the algebraic monodromy 
$h_*\colon H_q(F_{\bm};\C) \to H_q(F_{\bm};\C)$ is given by
\begin{equation}
\label{eq:charpoly}
\Delta_q(t) =   (t-1)^{b_q(U)}  \cdot \prod_{1<k\mid N} 
\Phi_k(t)^{ \depth_q(\rho_{\bm}^{N/k}) }.
\end{equation}

In the above expressions, the crucial quantities are the (non-negative)
depths of the characters $\rho_{\bm}^{N/k}\in H^{1}(U;\C^*)$, which 
depend on the position of these characters with respect to the 
characteristic varieties $\VV^q_s(U)$.  Here are some basic (well-known) 
examples of how such a computation goes.

\begin{example}
\label{ex:mf pencil}
Let $\A$ be a pencil of $n\ge 3$ lines through the 
origin of $\C^2$ defined by the polynomial $f=x^{n}-y^{n}$.  
Then $U$ is homeomorphic to $\Sigma_{0,n}=\C\mysetminus \{\text{$n-1$ points}\}$, 
and so its characteristic varieties are $\VV^1_1(U)=\cdots = \VV^1_{n-2}(U)\cong (\C^*)^{n-1}$ 
and $\VV^1_{n-1}(U)= \{\bo\}$ (see Example \ref{ex:cv-surfaces}).  
It follows that $b_1(F)=n-1+(n-2)(n-1)=(n-1)^2$ and 
$\Delta_1(t)=(t-1)(t^{n}-1)^{n-2}$. In turn, either this computation or an Euler characteristic 
argument shows that $F=\Sigma_{g,n}$, a Riemann surface of genus $g=\binom{n-1}{2}$ 
with $n$ punctures.
\end{example}

\begin{figure}
\centering
\begin{tikzpicture}[scale=0.78]
\draw[style=thick,densely dashed,color=blue] (-0.5,3) -- (2.5,-3);
\draw[style=thick,densely dotted,color=red]  (0.5,3) -- (-2.5,-3);
\draw[style=thick,color=dkgreen] (-3,-2) -- (3,-2);
\draw[style=thick,densely dotted,color=red]  (3,-2.68) -- (-2,0.68);
\draw[style=thick,densely dashed,color=blue] (-3,-2.68) -- (2,0.68);
\draw[style=thick,color=dkgreen] (0,-3.1) -- (0,3.1);
\node at (-2,-2) {$\bullet$};
\node at (2,-2) {$\bullet$};
\node at (0,2) {$\bullet$};
\node at (0,-0.7) {$\bullet$};
\node at (-2.5,0.4) {$x+z$};
\node at (2.5,0.4) {$x-y$};
\node at (0,-3.5) {$y+z$};
\node at (-3.7,-2) {$y-z$};
\node at (-2.5,-3.3) {$x-z$};
\node at (2.5,-3.3) {$x+y$};
\end{tikzpicture}
\caption{A $(3,2)$-net on the braid arrangement}
\label{fig:braid}
\end{figure}
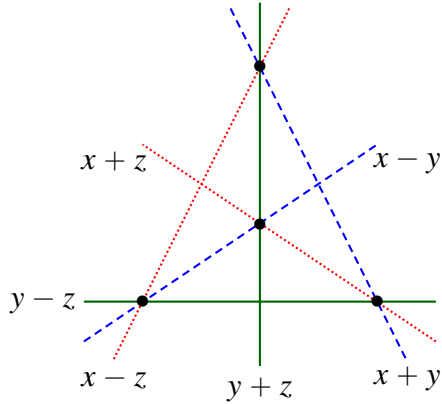

\begin{example}
\label{ex:mf braid arr}
Let $\A$ be the braid arrangement in $\C^3$, defined by the polynomial 
$f=(x+y)(x-y)(x+z)(x-z)(y+z)(y-z)$. Its complement $M$ is, up to a $\C$ factor, 
homeomorphic to the complement of the reflection arrangement of type 
$\operatorname{A}_3$ in $\C^4$; thus, $\pi_1(M)=P_4$. Labeling the 
hyperplanes of $\A$ as the factors of $f$, the flats in $L_2(\A)$ may 
be labeled as $136$, $145$, $235$, and $246$. 
The braid arrangement supports a $(3,2)$-net, 
corresponding to the partition $(12|34|56)$ depicted in Figure \ref{fig:braid}. 
This net defines a rational map, $\psi \colon \CP^2 \dashrightarrow \CP^1$, 
sending $\pcoor{x,y,z}\mapsto \pcoor{x^2-y^2,x^2-z^2}$. In turn, this map restricts to 
a pencil, $\psi\colon U\to \Sigma_{0,3}=\CP^1 \mysetminus 
\{  \pcoor{0,1}, \pcoor{1,0}, \pcoor{1,1} \}$, 
which yields by pullback a $2$-dimensional essential component 
of $\V^1_1(U)$, namely, the subtorus 
\begin{equation}
\label{eq:braid-torus}
T=\{ (s, s, t, t, (st)^{-1}) : s,t \in \C^*\}.
\end{equation}

Letting $\rho \colon \pi_1(U)\to \C^*$, 
$\overline{\gamma}_H\mapsto e^{2\pi \ii/6}$ 
be the diagonal character which defines the $\Z_6$-cover $\sigma\colon F\to U$, 
we have that $\rho^2\in T$, yet $\rho\notin T$. Since $\VV^1_2(U)=\{\bo\}$, 
it follows that $b_1(F)=5 + \varphi(3)\cdot \depth_1 (\rho^2) = 5+2\cdot 1=7$  
and $\Delta_1(t)=(t-1)^5(t^2+t+1)$.  
\end{example}

More generally, as shown in \cite[Thm.~1.6]{PS-plms17}, if an arrangement of 
projective lines in $\CP^2$ has only double or triple points, then the 
characteristic polynomial of the algebraic monodromy of the Milnor fibration 
is given by a completely combinatorial formula. 

For an arrangement $\A$  
and a prime $p$, define $\beta_p(\A)\coloneqq \max\{s : \omega \in \RR^1_s(M(\A);\Z_p)\}$, 
where $\omega=\sum_{H\in \A} e_H\in H^1(M(\A);\Z_p)$.  Clearly, the non-negative integer 
$\beta_p(\A)$ depends only on $L_{\le 2}(\A)$ and $p$. 

\begin{theorem}[\cite{PS-plms17}]
\label{thm:ps-triple}
Suppose $L_2(\A)$ has only flats of multiplicity $2$ and $3$.  Then 
$\beta_3(\A)\in \{0, 1, 2\}$ and 
\[
\Delta_{1}(t)=(t-1)^{\abs{\A}-1}\cdot (t^2+t+1)^{\beta_3(\A)}.
\]
Moreover, $\beta_3(\A)\ne 0$ if and only if $\A$ supports a $3$-net. 
\end{theorem}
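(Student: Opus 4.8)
The plan is to deduce Theorem~\ref{thm:ps-triple} from the general formula \eqref{eq:charpoly} for the characteristic polynomial of the algebraic monodromy of the Milnor fibration, specialized to the usual Milnor fiber $F=F(\A)$ of a line arrangement with only double and triple points. In this case $N=n=\abs{\A}$, and the relevant characters are the powers $\rho^{n/k}$ of the diagonal character $\rho\colon\pi_1(U)\to\C^*$, $\overline{\gamma}_H\mapsto e^{2\pi\ii/n}$, ranging over divisors $k>1$ of $n$; the power $\rho^{n/k}$ sends every $\overline{\gamma}_H$ to a primitive $k$-th root of unity. So formula \eqref{eq:charpoly} reads $\Delta_1(t)=(t-1)^{b_1(U)}\prod_{1<k\mid n}\Phi_k(t)^{\depth_1(\rho^{n/k})}$ with $b_1(U)=\abs{\A}-1$, and everything reduces to computing $\depth_1(\rho^{n/k})$ for each such $k$.

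The crux is to show that $\depth_1(\rho^{n/k})=0$ unless $k=3$, and that $\depth_1(\rho^{n/3})=\beta_3(\A)$. First I would observe, using the description of $\VV^1_1(U)$ in \S\ref{subsec:cv-arr} together with the Falk--Yuzvinsky classification of $\RR_1(\A)$ recalled in \S\ref{subsec:multinets}, that under the double/triple-point hypothesis every positive-dimensional component of $\VV^1_1(U)$ through the origin comes from a $3$-multinet (in fact a $3$-net, since all base-locus points have multiplicity $3$), hence is a $2$-dimensional subtorus contained in $\VV^1_1(U)\setminus\VV^1_3(U)$, and there are no translated components for $\rho^{n/k}$ to land on other than those detected combinatorially. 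A character of the form $\rho^{n/k}$ (diagonal, of order exactly $k$) can only lie on such a $3$-net subtorus $T=\psi^*(H^1(\Sigma_{0,3};\C^*))$ when $k\mid 3$, i.e.\ $k=3$: indeed by the formula $\psi^*(c_i^\vee)=u_i=\sum_{H\in\A_i}e_H$ from \S\ref{subsec:multinets}, the condition that the diagonal character restrict from the pencil forces $e^{2\pi\ii/k}$ to be constant on each part and the product over the two relevant parts to be trivial, which pins down $k=3$. This handles $k\ne 3$: $\rho^{n/k}$ avoids all positive-dimensional components through $\bo$, and one rules out isolated/translated torsion components by the same double/triple analysis (or cites that for such arrangements $\VV^1_1(U)$ has no translated components), so $\depth_1(\rho^{n/k})=0$.

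For $k=3$, I would identify $\depth_1(\rho^{n/3})$ with $\beta_3(\A)$ by passing to the resonance variety over $\Z_3$. The character $\rho^{n/3}$ is exactly the mod-$3$ reduction $\exp$ of the diagonal class $\omega=\sum_{H}e_H\in H^1(M(\A);\Z_3)$, pushed to $U$; and by the mod-$p$ Tangent Cone/Aomoto-complex comparison (the $\Z_p$ analogue of the results in \S\ref{subsec:cv-arr}, as in \cite{PS-plms17, Su-bock}), $\depth_1$ of this character equals $\max\{s:\omega\in\RR^1_s(M(\A);\Z_3)\}=\beta_3(\A)$; one uses here that $H_1(M(\A);\Z)$ is torsion-free so the mod-$3$ resonance is the naive one. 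Plugging back gives $\Delta_1(t)=(t-1)^{\abs{\A}-1}(t^2+t+1)^{\beta_3(\A)}$, since $\Phi_3(t)=t^2+t+1$. The bound $\beta_3(\A)\in\{0,1,2\}$ follows because each $3$-net contributes a single $2$-dimensional component of $\VV^1_1(U)$ containing $\rho^{n/3}$ but lying outside $\VV^1_3(U)$, so the depth can jump by at most $2$ from two distinct such components meeting at the torsion character $\rho^{n/3}$ (invoking Theorem~\ref{thm:acm}), while three or more would force $\depth_1(\rho^{n/3})\ge 3$, contradicting $\rho^{n/3}\notin\VV^1_3$; a short argument (the Falk--Yuzvinsky ``$3$-nets on a line arrangement with only triple points are tightly constrained'' phenomenon) shows at most two $3$-nets can share the diagonal character. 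Finally, $\beta_3(\A)\ne 0$ iff $\omega\in\RR^1_1(M(\A);\Z_3)$ iff there is a nonzero mod-$3$ Aomoto-cohomology class at $\omega$, which by the combinatorial description of mod-$p$ resonance of arrangements with only double/triple points is equivalent to the existence of a $3$-net on $\A$.

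The main obstacle I expect is the $k=3$ step: rigorously matching $\depth_1(\rho^{n/3})$ with $\beta_3(\A)$ requires the mod-$3$ version of the tangent-cone formalism (resonance-equals-linearized-characteristic-variety over $\Z_3$), which is more delicate than over $\C$ because squares of degree-one classes need not vanish in general—though here torsion-freeness of $H^1(M;\Z)$ saves us—and because one must be careful that no \emph{extra} depth is contributed at $\rho^{n/3}$ by components of $\VV^1_1(U,\Z_3)$ that are invisible over $\C$. Controlling precisely which $3$-nets pass through the single diagonal character, and hence getting the exponent to be exactly $\beta_3(\A)$ rather than a sum over all $3$-nets, is the delicate combinatorial heart of the argument.
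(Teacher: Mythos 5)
The paper does not prove this theorem; it is quoted from \cite{PS-plms17}, so there is no in-paper argument to compare against. Evaluating your proposal on its own merits: the reduction via \eqref{eq:charpoly} to computing $\depth_1(\rho^{n/k})$ for divisors $k>1$ of $n$, the observation that the diagonal character $\rho^{n/k}$ can lie on a $3$-net subtorus only when $k=3$, and the appeal to Theorem~\ref{thm:acm} to explain the bound $\beta_3(\A)\le 2$ are all on target, and you have correctly located the crux at $k=3$. But the step you present as a citation is, in effect, the theorem you are trying to prove.

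There is no general ``mod-$p$ Tangent Cone/Aomoto-complex comparison'' giving the equality $\depth_1(\rho^{n/3}) = \max\{s : \omega\in\RR^1_s(M(\A);\Z_3)\}$. What holds in general is only the one-sided \emph{modular bound} $\depth_1(\rho^{n/p^s}) \le \beta_p(\A)$ for every prime $p$ and $s\ge 1$ (a main technical result of \cite{PS-plms17}), and this inequality is known to be strict for some arrangements. Turning it into an equality under the double/triple hypothesis is precisely the nontrivial content of \cite{PS-plms17}: one needs (i) the combinatorial vanishing $\beta_p(\A)=0$ for $p\ne 3$ under the multiplicity hypothesis --- which is how the $k\ne 3$ cases are actually disposed of, rather than by excluding translated or isolated components of $\VV^1_1(U)$ directly, which your argument only gestures at; (ii) the combinatorial bound $\beta_3(\A)\le 2$; and (iii) the structure theory at $p=3$ showing that $\beta_3(\A)>0$ forces a $3$-net on $\A$, whose associated pencil gives $\depth_1(\rho^{n/3})\ge 1$, with a second $3$-net meeting the first at $\rho^{n/3}$ and Theorem~\ref{thm:acm} yielding $\depth_1(\rho^{n/3})\ge 2$ when $\beta_3=2$. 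Your proposal assumes exactly the matching of depth with $\beta_3$ that this machinery is designed to produce, so as written it is circular at the key step, even though you did flag $k=3$ as the place requiring care.
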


\subsection{Trivial algebraic monodromy}
\label{subsec:tam}
Henceforth, we will concentrate mainly on the case when the 
algebraic monodromy of the Milnor fibration is trivial. 
More precisely, suppose $F_{\bm}\to M\to \C^*$ is the Milnor fibration of a 
multi-arrangement $(\A,\bm)$, with monodromy diffeomorphism $h\colon 
F_{\bm}\to F_{\bm}$.  We say that $(\A,\bm)$ has {\em trivial algebraic 
monodromy over $\k$}\/ (where $\k$ is either $\Z$ or a field) 
if $h_*\colon H_*(F_{\bm};\k)\to H_*(F_{\bm};\k)$ is the identity. 
Clearly, when $\k$ a field, this condition only depends on the 
characteristic of $\k$.

The condition that $h_*\colon H_q(F_{\bm};\Q)\to H_q(F_{\bm};\Q)$ be 
the identity is equivalent to $\Delta_q(t)=(t-1)^{b_q(F_{\bm})}$. Thus, in 
view of formulas \eqref{eq:betti-mf} and \eqref{eq:charpoly}, the 
condition is equivalent to $b_q(F_{\bm})=b_q(U)$, where $U=\P(M)$. 
Therefore,  $(\A,\bm)$ has trivial algebraic monodromy over $\Q$ if and only if 
$H_*(F_{\bm};\Q)\cong H_*(U;\Q)$. In fact, more is true. 
As noted previously, the homology 
groups of both $U$ and $M$ are torsion-free. Making use of the 
K\"{u}nneth formula for $M\cong U\times \C^*$ and the Wang 
exact sequence for the fibration $F_{\bm}\to M\to \C^*$, we conclude that 
$(\A,\bm)$ has trivial algebraic monodromy over $\k$ (where $\k=\Z$ 
or $\k$ a field) if and only if $H_*(F_{\bm};\k)\cong H_*(U;\k)$. 
Likewise, $h_*\colon H_1(F_{\bm};\Z)\to H_*(F_{\bm};\Z)$ 
is the identity if and only if $H_1(F_{\bm};\Z)=\Z^{n-1}$, where $n=\abs{\A}$. 

\begin{remark}
\label{rem:graphic}
Triviality of the algebraic monodromy in degree $q=1$ does not imply triviality 
of the action in higher degrees. For instance, if 
$\A$ is a graphic arrangement, that is, a sub-arrangement of the braid arrangement 
of type $\operatorname{A}_{n-1}$ from Example \ref{ex:config}, then $h_*$ always 
acts trivially on $H_1(F(\A);\Q)$, except when $\A$ is a reflection arrangement 
of type $\operatorname{A}_{2}$ or $\operatorname{A}_{3}$, see \cite[Thm.~B]{McP}. 
On the other hand, if $\A$ is the braid arrangement of type $\operatorname{A}_{n-1}$, 
then $h_*$ always acts non-trivially on the top homology group, $H_{n-2}(F(\A);\Q)$, 
see \cite[\S7]{CS98}. 
\end{remark}

Unlike the homology groups of the complement $M$, 
examples from \cite{CDS, DeS-plms, Yo20} 
show that the homology groups of the Milnor fiber $F_{\bm}$ may have 
non-trivial torsion. Therefore, if the monodromy $h\colon F_{\bm}\to F_{\bm}$ 
acts as the identity on $H_q(F_{\bm};\Q)$, for some $q\ge 1$, 
we cannot conclude that it also acts as the identity on $H_q(F_{\bm};\Z)$. 
Indeed, if $H_q(F_{\bm};\Z)$ has torsion, then the Wang sequence 
of the fibration $F_{\bm} \to M \to \C^*$ shows that $h_*\colon H_q(F_{\bm};\Z) 
\to H_q(F_{\bm};\Z)$ cannot be equal to the identity. 
We will illustrate this point in Sections \ref {sect:delB3}--\ref{sect:mf-yoshi}. 

\subsection{Triviality tests}
\label{subsec:triv-test}
Let $\A$ be a central arrangement of $n$ hyperplanes in $\C^3$. 
For the usual Milnor fiber $F=F(\A)$, there are two useful tests informing on whether 
the algebraic monodromy $h_*\colon H_1(F;\C) \to H_1(F;\C)$ is equal to the identity. 
Both of these tests are based on the nature of the multinets supported by $L(\A)$ and 
of the characteristic varieties of the complement $M=M(\A)$. 

We start with a criterion insuring the triviality of the algebraic monodromy. 
We will say that a subvariety of the algebraic torus $(\C^*)^n$ is {\em essential}\/ 
if it is not contained in any proper coordinate subtorus.

\begin{prop}
\label{prop:no-multinet} 
If the characteristic variety $\VV^1_1(M)$ has no essential irreducible components, 
then the algebraic monodromy $h_*\colon H_1(F;\C) \to H_1(F;\C)$ is trivial.
\end{prop}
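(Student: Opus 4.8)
The plan is to derive triviality of the algebraic monodromy directly from formula \eqref{eq:charpoly}, reducing everything to a statement about the depths $\depth_1(\rho_{\bm}^{N/k})$ of the torsion characters $\rho^{N/k}$ arising from the diagonal character $\rho$. Recall from \S\ref{subsec:cp-mono} that for the usual Milnor fiber $F=F(\A)$ the cover $\sigma\colon F\to U$ is classified by the diagonal homomorphism $\chi\colon\pi_1(U)\surj\Z_n$, so $\rho=j\circ\chi\colon\pi_1(U)\to\C^*$ sends every $\overline{\gamma}_H$ to $e^{2\pi\ii/n}$, and for each $k\mid n$ the character $\rho^{n/k}$ sends every $\overline{\gamma}_H$ to $e^{2\pi\ii/k}$, a character of order exactly $k$. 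By \eqref{eq:charpoly}, $h_*$ is the identity on $H_1(F;\C)$ if and only if $\depth_1(\rho^{n/k})=0$ for every divisor $k>1$ of $n$; equivalently, if and only if none of these characters $\rho^{n/k}$ lies on $\VV^1_1(U)$.

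First I would translate the hypothesis on $M$ into a statement about $U$. By Proposition \ref{prop:cjl-MU}\eqref{mu2} (or, concretely, formula \eqref{eq:vux}), $\VV^1_1(U)$ is the intersection of $\VV^1_1(M)\subset(\C^*)^n$ with the subtorus $\{t: t_1\cdots t_n=1\}$, and the isomorphism $\pi^*\colon\VV^1_1(U)\isom\VV^1_1(M)$ is the monomial map \eqref{eq:mono-map}. A subtorus (or translated subtorus) $T\subset(\C^*)^n$ is essential precisely when it is not contained in a proper coordinate subtorus. The key observation is that a component of $\VV^1_1(M)$ contained in a proper coordinate subtorus $\{t_i=0\}$... rather, $\{t_i=1\}$... cannot meet the image of the diagonal-type character, because under $\pi^*$ the character $\rho^{n/k}$ corresponds to the point $(\zeta,\dots,\zeta)\in(\C^*)^n$ with $\zeta=e^{2\pi\ii/k}$ — every coordinate is a primitive $k$-th root of unity, in particular $\ne 1$ — so this point lies in no proper coordinate subtorus of $(\C^*)^n$. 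Consequently, if $\VV^1_1(M)$ has no essential components, every component lies in some $\{t_i=1\}$, hence misses every such diagonal point, hence $\rho^{n/k}\notin\VV^1_1(M)$, hence (pulling back) $\rho^{n/k}\notin\VV^1_1(U)$, for all $k>1$ dividing $n$. That gives $\depth_1(\rho^{n/k})=0$ for all such $k$, and \eqref{eq:charpoly} then yields $\Delta_1(t)=(t-1)^{b_1(U)}$, i.e.\ $h_*=\id$ on $H_1(F;\C)$.

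The step I expect to require the most care is pinning down exactly why a character of the form $(\zeta,\dots,\zeta)$ with $\zeta\ne 1$ cannot lie on a component of $\VV^1_1(M)$ contained in a proper coordinate subtorus — one must be careful that ``coordinate subtorus'' here means a subtorus of the form $\{t: t_{i_1}=\cdots=t_{i_j}=1\}$ for some nonempty index set, and then the argument is immediate since such a point has all coordinates $\ne 1$; but one should also address the possibility that a component is a \emph{torsion-translated} subtorus sitting inside a coordinate subtorus (still fine, as the ambient coordinate subtorus avoids the diagonal point), and the possibility of isolated (torsion) points — an isolated point of $\VV^1_1(M)$ that is itself essential would contradict the hypothesis, while a non-essential isolated point again has some coordinate equal to $1$ and hence is not of the form $(\zeta,\dots,\zeta)$. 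A clean way to organize this is to note that every irreducible component $W$ of $\VV^1_1(M)$ which is not essential is contained in some hyperplane-coordinate subtorus $\{t_i=1\}$, and the diagonal character $\rho^{n/k}$ has $i$-th coordinate a primitive $k$-th root of unity, which is $\ne 1$ because $k>1$; therefore $\rho^{n/k}\notin W$. Taking the union over all components finishes the proof.
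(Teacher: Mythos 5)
Your argument is correct and follows essentially the same route as the paper's: translate to $U$, observe that under $\pi^*$ the relevant torsion characters become diagonal points $(\zeta,\dots,\zeta)\in(\C^*)^n$ with $\zeta\ne1$, and note that such a point cannot lie in any proper coordinate subtorus, hence misses every non-essential component of $\VV^1_1(M)$. The only cosmetic difference is that you phrase the conclusion via the characteristic-polynomial formula \eqref{eq:charpoly} and the individual characters $\rho^{n/k}$, whereas the paper phrases it via the Betti-number formula and the whole cyclic group $\im(\chi^*)$; these are equivalent.
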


\begin{proof}
Set $n=\abs{\A}$. 
By formulas \eqref{eq:betti-cover} and \eqref{eq:chi}, the first Betti 
number of $F$ is given by 
\begin{equation}
\label{eq:betti1F}
b_1(F)=\sum_{s\ge 1}\abs{\im(\chi^*) \cap \V^1_s(U)},
\end{equation}
where $U=\P(M)$ and $\chi\colon \pi_1(U)\to \Z_n$ is the homomorphism 
that sends each meridian curve $\overline{\gamma}_H$ to $1$. The 
cyclic subgroup $\im(\chi^*)\subset H^1(U;\C^*)\cong (\C^*)^{n-1}$ 
is generated  by the character $\rho=(\zeta, \dots, \zeta)$, where 
$\zeta=e^{2 \pi \ii/n}$. 

Recall that the Hopf map $\pi\colon M\to U$ induces 
a homomorphism $\pi^*\colon H^1(U;\C^*)\to H^1(M;\C^*)$ 
which restricts to an isomorphism $\V^1_1(U)\isom \V^1_1(M)$. 
Recall also that the map $\pi^*\colon  (\C^*)^{n-1} \to (\C^*)^{n}$
is given in coordinates by formula \eqref{eq:mono-map}. Since $\zeta^n=1$, 
it follows that $\pi^*(\im(\chi^*))$ is the cyclic subgroup of 
$(\C^*)^{n}$ generated by $\tilde{\rho}=(\zeta, \dots, \zeta, \zeta)$. 
Therefore, $\pi^*(\im(\chi^*))$ is contained in the diagonal 
subtorus $T_{\Delta}=\{ (z,\dots, z) \mid z\in \C^*\}\subset (\C^*)^{n}$. 
 
Now let $C$ be an irreducible component of $\V^1_1(M)$. 
By our assumption, $C$ lies in a proper coordinate subtorus 
of $H^1(M;\C^*)=(\C^*)^{n}$;  
hence, $C$ intersects intersects $T_{\Delta}$ only at the identity. 
It follows that $\pi^*(\im(\chi^*)) \cap \VV^1_1(M)=\{\bo\}$, and therefore  
$\im(\chi^*) \cap \V^1_1(U)=\{\bo\}$. In view of \eqref{eq:betti1F}, this shows  
that $b_1(F)=n-1$, and the proof is complete.
\end{proof}

The following criterion for non-triviality of the algebraic 
monodromy is proved in \cite[Thm.~8.3]{PS-plms17}, 
based on results from \cite{DP-pisa} and \cite{FY}.

\begin{prop}[\cite{PS-plms17}]
\label{prop:red-multi}
Let $\A$ be a central arrangement in $\C^3$. 
If $\A$ admits a reduced multinet, then the algebraic 
monodromy (in degree $1$) over $\C$ is non-trivial.
\end{prop}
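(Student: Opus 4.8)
The plan is to establish Proposition \ref{prop:red-multi} by combining the pencil/multinet machinery reviewed in \S\ref{subsec:multinets} with the Betti-number formula \eqref{eq:betti1F} for the usual Milnor fiber. Suppose $\A$ admits a reduced multinet $\NN$ with parts $\A_1\sqcup\cdots\sqcup\A_k$, $k\ge 3$, and base locus $\XX$. Since $\NN$ is reduced, $m_H=1$ for all $H\in\A$, so the associated pencil $\psi\colon M(\A)\to \CP^1\setminus D$ (where $\abs{D}=k$) induces in homology the map $\psi_*(x_H)=c_i$ for $H\in\A_i$. The key observation is that, because all multiplicities are $1$, this pencil actually factors through the projectivized complement $U=\P(M)$: indeed $\psi$ is defined by a degree-$0$ ratio of homogeneous polynomials, hence is $\C^*$-invariant, so we get $\bar\psi\colon U\to \CP^1\setminus D$ with $\pi^*\circ\bar\psi^* = \psi^*$. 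First I would make this factorization precise and record that $\bar\psi^*\colon H^1(\CP^1\setminus D;\C^*)\to H^1(U;\C^*)$ is injective (one checks the composite $\pi^*\circ\bar\psi^*=\psi^*$ is injective, which follows from the description in \S\ref{subsec:multinets}).

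Next I would locate the diagonal character $\rho=(\zeta,\dots,\zeta)$, $\zeta=e^{2\pi\ii/n}$, inside $\im(\bar\psi^*)$. Under $\bar\psi^*$ the coordinate dual to $c_i$ pulls back to $\sum_{H\in\A_i}\bar x_H^\vee$ (the reduced analogue of $u_i=\sum_{H\in\A_i}e_H$ from \S\ref{subsec:multinets}). A character $\eta=(\eta_1,\dots,\eta_k)$ on $\CP^1\setminus D$ — subject to $\prod\eta_i=1$ — pulls back to the character of $\pi_1(U)$ sending $\bar\gamma_H\mapsto \eta_i$ for $H\in\A_i$. So $\rho$ lies in $\im(\bar\psi^*)$ precisely when there is a common value $\eta_i\equiv\zeta$ on every class; taking $\eta_i=\zeta^{\ell_i}$ where $\ell_i=\abs{\A_i}$ works since $\sum\ell_i=n$ forces $\prod\eta_i=\zeta^n=1$. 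Wait — I need $\rho(\bar\gamma_H)=\zeta$ for every $H$, i.e. the pulled-back character is constant $=\zeta$, which is $\bar\psi^*(\zeta,\dots,\zeta)$ only when that $k$-tuple satisfies $\prod_i\zeta=\zeta^k=1$; this holds iff $k\mid n$. Since a reduced multinet has $\abs{\A_i}=\ell_i$ with $\sum\ell_i=n$ but the $\ell_i$ need not be equal, I would instead argue: the subtorus $P_\NN=\exp(\bar\psi^*(H^1(\CP^1\setminus D;\C)))$ passes through $\bo$ and has dimension $k-1\ge 2$; since $\exp$ of the line $\R\cdot(u_2-u_1,\dots)$ lands in $\VV^1_1(U)$, and $\rho$ — being the image of a torsion point on $\CP^1\setminus D$ whose existence I must verify — lies on $P_\NN$. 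The cleaner route, following \cite{DP-pisa}, is: the orbifold pencil $\bar\psi$ together with the torsion character makes $\rho$ (or a nonzero power $\rho^j$) lie in a positive-dimensional component $T\subset\VV^1_s(U)$ with $s\ge 1$, and then $\im(\chi^*)\cap\VV^1_s(U)\supsetneq\{\bo\}$, so \eqref{eq:betti1F} gives $b_1(F)>n-1$, i.e. nontrivial monodromy.

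The main obstacle — and the reason this is cited to \cite{PS-plms17} rather than proved from scratch — is the arithmetic step showing the diagonal character $\rho$ (equivalently, its relevant power) genuinely sits on the subtorus $\exp(P_\NN)\subset\VV^1_1(U)$, rather than just near it. This is where the hypothesis $k\ge 3$ and the precise combinatorics of a reduced multinet enter: one shows that the homomorphism $\chi\colon\pi_1(U)\surj\Z_n$, $\bar\gamma_H\mapsto 1$, composed with the inclusion $\Z_n\inj\C^*$, factors (up to a root of unity) through $\bar\psi_\sharp\colon\pi_1(U)\to\pi_1(\CP^1\setminus D)$ — or rather through the orbifold fundamental group obtained by allowing multiple fibers — and hence $\rho^j\in\exp(P_\NN)$ for a suitable $j$ with $\gcd(j,n)$ controlled. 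Concretely I would invoke the computation in \cite[Prop.~6.9]{ACM} / equation \eqref{eq:v1piorb}, or the direct argument of \cite{DP-pisa}, to conclude that $\depth_1(\rho^{n/k})\ge k-2\ge 1$ when $k\mid n$, and more generally that some $\rho^j\ne\bo$ in $\im(\chi^*)$ has positive depth in $\VV^1_1(U)$. Once that single character of positive depth is produced, \eqref{eq:betti1F} immediately yields $b_1(F)\ge n$, so $\Delta_1(t)\ne(t-1)^{b_1(F)}$ and the algebraic monodromy over $\C$ in degree $1$ is nontrivial, completing the proof. I would present the full details only for the cleanest case and cite \cite[Thm.~8.3]{PS-plms17} for the general reduced multinet.
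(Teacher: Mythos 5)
Your overall strategy---use the pencil $\psi\colon M\to S=\CP^1\setminus D$ attached to the multinet, pull back a torsion character, and feed the resulting nonzero depth into \eqref{eq:betti1F}---is the right one, and the final reduction ``some $\rho^j\ne\bo$ has positive depth $\Rightarrow b_1(F)>n-1\Rightarrow$ monodromy nontrivial'' is correct. But the passage where you try to place a power of $\rho$ on $T=\psi^*\bigl(H^1(S;\C^*)\bigr)$ contains two genuine errors that make you give up and fall back on citation, when in fact the computation closes cleanly.

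First, you assert ``a reduced multinet has $\abs{\A_i}=\ell_i$ with $\sum\ell_i=n$ but the $\ell_i$ need not be equal.'' This is false: axiom \eqref{m1} of a multinet (\S\ref{subsec:multinets}) requires $\sum_{H\in\A_i}m_H=\ell$ to be \emph{independent of $i$}, so for a reduced multinet $\abs{\A_i}=\ell$ for every $i$, hence $n=k\ell$ and $k\mid n$ automatically. Second, you test whether $\rho$ \emph{itself} lies in $\im(\bar\psi^*)$ and claim the needed constraint $\zeta^k=1$ ``holds iff $k\mid n$''; with $\zeta=e^{2\pi\ii/n}$ that constraint actually reads $n\mid k$, which essentially never holds, so $\rho\notin\im(\bar\psi^*)$. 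The character you want is the $\ell$-th power. Since $\psi_*(x_H)=c_i$ for $H\in\A_i$ (reduced case), the pullback of $(\zeta_k,\dots,\zeta_k)\in H^1(S;\C^*)$ with $\zeta_k=e^{2\pi\ii/k}$---this is a legitimate character because $\prod_{i=1}^k\zeta_k=1$---sends each $\overline{\gamma}_H$ to $\zeta_k=\zeta^{\ell}$, i.e.\ it equals $\rho^{\ell}=\rho^{n/k}$. This is a nontrivial character (order $k\ge 3$) lying on the subtorus $T$, and as recalled in \S\ref{subsec:cv-arr}, $T\subset\VV^1_{k-2}(U)$, so $\depth_1(\rho^{n/k})\ge k-2\ge 1$. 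Formula \eqref{eq:betti1F} then gives $b_1(F)\ge(n-1)+\varphi(k)(k-2)>n-1$, as wanted. Once the multinet axiom is invoked correctly, there is nothing left to outsource to \cite{PS-plms17} or \cite{DP-pisa}: the whole argument is a direct consequence of the Falk--Yuzvinsky pencil construction plus the identity $n=k\ell$.
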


If an arrangement supports essential multinets, but none of those multinets 
is reduced, then the algebraic monodromy (over $\C$) may still be trivial, 
as illustrated by the $\operatorname{B}_3$ reflection arrangement from 
Section \ref{subsec:b3}, though it may also be non-trivial, as illustrated 
by the complex reflection arrangements 
of type $G(3d+1,1,3)$ with $d>0$ from \cite[Ex.~8.11]{PS-plms17}.

\section{Cohomology jump loci of Milnor fibers}
\label{sect:cjl-milnor}

In this section, we analyze the resonance and characteristic 
varieties of the Milnor fibers of a hyperplane arrangement, 
under the assumption that the algebraic monodromy of the Milnor 
fibration is trivial. 

\subsection{Cohomology jump loci in finite regular covers}
\label{subsec:cjl-covers}

We start with some general results regarding the behavior of jump loci in 
finite regular covers. These results were proved by Dimca and Papadima 
in \cite[Prop.~2.1, Cor.~2.2, Thm.~2.8]{DP-pisa}. In the next two propositions, 
we state them in a slightly modified form, that is better adapted to our context. 

\begin{prop}[\cite{DP-pisa}]
\label{prop:jumpcov-1}
Let $p\colon Y\to X$ be a finite regular cover.  Then,
\begin{enumerate}[itemsep=2pt, topsep=-1pt]
\item \label{cov1} 
The induced homomorphism  $p^*\colon H^1(X;\C)\to H^1(Y;\C)$ is an injection 
which restricts to maps   
$p^*\colon \RR^q_s(X) \to  \RR^q_s(Y)$, for all $q\ge 0$ and $s\ge 1$. 
\item \label{cov2}
The morphism $p^*\colon H^1(X;\C^*)\to H^1(Y;\C^*)$ restricts to maps 
$p^*\colon \V^q_s(X) \to  \V^q_s(Y)$, for all $q\ge 0$ and $s\ge 1$. 
\end{enumerate}
\end{prop}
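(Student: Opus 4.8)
The plan is to exploit the functoriality of cohomology jump loci under the natural maps induced by a finite regular cover, combined with the standard transfer argument that makes $p^*$ injective in cohomology. Since this is essentially a restatement of \cite[Prop.~2.1, Cor.~2.2]{DP-pisa}, I will give the argument directly rather than merely cite it.

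First I would handle part \eqref{cov1}. The map $p\colon Y\to X$ is a regular cover with finite deck group $\Gamma$, so there is a transfer homomorphism $\tau\colon H^*(Y;\C)\to H^*(X;\C)$ with $\tau\circ p^* = \abs{\Gamma}\cdot\id$; since we are working over $\C$, this shows $p^*\colon H^*(X;\C)\to H^*(Y;\C)$ is injective, and in particular $p^1\colon H^1(X;\C)\inj H^1(Y;\C)$. The induced ring map $p^*\colon H^*(X;\C)\to H^*(Y;\C)$ is a morphism of graded-commutative algebras, and it is injective in degree $1$. For any $a\in H^1(X;\C)$, the map $p^*$ is then a cochain map from the Aomoto-type complex $(H^*(X;\C),\delta_a)$ to $(H^*(Y;\C),\delta_{p^*(a)})$: indeed $p^*(au)=p^*(a)p^*(u)$ by multiplicativity. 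It is worth noting that this cochain map need not induce an injection or surjection on cohomology in general, so one cannot hope for an inclusion $\RR^q_s(Y)\subseteq$ (image of $\RR^q_s(X)$) or the reverse; what the statement claims is merely that $p^*$ \emph{restricts} to a map $\RR^q_s(X)\to\RR^q_s(Y)$, i.e.\ $a\in\RR^q_s(X)\implies p^*(a)\in\RR^q_s(Y)$. But this direction is not automatic from having a cochain map either. The actual mechanism, following \cite{DP-pisa}, is that after extending scalars to the group algebra $\C[\Gamma]$, the cochain complex $(H^*(Y;\C),\delta_{p^*(a)})$ contains $(H^*(X;\C),\delta_a)$ as a direct summand (the $\Gamma$-invariant part, via the idempotent $\frac{1}{\abs{\Gamma}}\sum_{g\in\Gamma} g$), so $H^q(H^*(X;\C),\delta_a)$ is a direct summand of $H^q(H^*(Y;\C),\delta_{p^*(a)})$ and hence $\dim H^q(Y,\delta_{p^*(a)})\ge\dim H^q(X,\delta_a)\ge s$. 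This gives $p^*(\RR^q_s(X))\subseteq\RR^q_s(Y)$.

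For part \eqref{cov2}, the argument is parallel but on the homological side, using local systems. A character $\rho\colon\pi_1(X)\to\C^*$ pulls back along $p_\sharp\colon\pi_1(Y)\inj\pi_1(X)$ to $p^*\rho=\rho\circ p_\sharp$. The key homological input is the isomorphism of $\pi_1(X)$-modules $C_*(\widetilde{Y};\C)\cong C_*(\widetilde{X};\C)\otimes_{\C[\pi_1(Y)]}\C[\pi_1(X)]$ realizing $C_*(\widetilde Y)$ as an induced/restricted module, which upon tensoring with $\C_\rho$ over $\C[\pi_1(X)]$ and applying the projection formula gives $H_q(Y;\C_{p^*\rho})$ as (containing, as a $\C[\Gamma]$-module summand) a copy of $H_q(X;\C_\rho)$; more precisely, $H_q(Y;\C_{p^*\rho})^{\Gamma}\cong H_q(X;\C_\rho)$ by the transfer/averaging argument (this is exactly the $N=1$ instance of the mechanism behind formula \eqref{eq:decomp}, but works for any finite $\Gamma$). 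Hence $\dim_\C H_q(Y;\C_{p^*\rho})\ge\dim_\C H_q(X;\C_\rho)$, so $\rho\in\VV^q_s(X)\implies p^*\rho\in\VV^q_s(Y)$, which is precisely the claim that $p^*$ restricts to $\VV^q_s(X)\to\VV^q_s(Y)$. (Note that here, unlike in part \eqref{cov1}, I do not even need injectivity of $p^*$ on character groups, though it does hold when $p_\sharp$ is surjective onto a finite-index subgroup and in any case is not needed for the ``restricts to a map'' conclusion.)

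The main obstacle, and the only place requiring genuine care, is establishing the direct-summand statements cleanly: namely that $H^q(H^*(X;\C),\delta_a)$ is a summand of $H^q(H^*(Y;\C),\delta_{p^*a})$, and that $H_q(X;\C_\rho)$ is a summand of $H_q(Y;\C_{p^*\rho})$. Both follow from the averaging idempotent $e=\frac{1}{\abs{\Gamma}}\sum_{g\in\Gamma}g\in\C[\Gamma]$ acting on the relevant ($\Gamma$-equivariant) chain or cochain complex of $Y$, whose image is $\Gamma$-equivariantly identified with the corresponding complex of $X$ (invariance of the transfer-split short exact sequence of complexes). Since $e$ is a chain map and $\C$ has characteristic zero, passing to (co)homology commutes with applying $e$, yielding the summand decomposition and hence the dimension inequality. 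Once that bookkeeping is in place, both parts follow immediately; everything else is routine functoriality of cup products and of the module structures defining the jump loci.
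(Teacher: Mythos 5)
Your proof is correct and self-contained. The paper itself does not prove this proposition---it states it as imported from \cite{DP-pisa}---and your transfer/averaging-idempotent argument is precisely the mechanism used there, with the key observations made explicit: that $p^*a$ is $\Gamma$-invariant so $\delta_{p^*a}$ preserves the invariant subcomplex and the transfer splits $p^*$ as a cochain map via the projection formula, and that $\C_{p^*\rho}$ is pulled back from $X$ so $H_q(Y;\C_{p^*\rho})$ acquires a $\C[\Gamma]$-module structure whose invariants recover $H_q(X;\C_\rho)$.
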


When the action of the group of deck transformations of the cover is homologically 
trivial (in degree $1$), more can be said. 

\begin{prop}[\cite{DP-pisa}]
\label{prop:jumpcov-2}
Let $p\colon Y\to X$ be a finite regular cover. Suppose the group of deck 
transformations acts trivially on $H_1(Y;\Q)$. Then, 
\begin{enumerate}[itemsep=2pt, topsep=-1pt]
\item \label{dp1}
The map $p^*\colon H^1(X;\C)\to H^1(Y;\C)$ is an isomorphism 
that identifies $\RR^1_s(X)$ with $\RR^1_s(Y)$, for all $s\ge 1$.
\item  \label{dp2}
The map $p^*\colon H^1(X;\C^*)^0\to H^1(Y;\C^*)^0$ 
is a surjection with finite kernel. Moreover, if $X$ is $1$-formal, 
this map establishes a bijection between the sets of irreducible 
components of $\WW^1_s(X)$ and $\WW^1_s(Y)$ that pass through 
the identity, for all $s\ge 1$. 
\end{enumerate}
\end{prop}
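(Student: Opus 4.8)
\textbf{Proof proposal for Proposition \ref{prop:jumpcov-2}.}

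The plan is to deduce both parts by combining the transfer machinery for finite regular covers with the hypothesis of homologically trivial deck action. Throughout, write $\Gamma$ for the (finite) deck-transformation group of $p\colon Y\to X$, so that $H_1(Y;\Q)$ carries a trivial $\Gamma$-action by assumption. First I would record the two standard inputs. On one hand, from Proposition \ref{prop:jumpcov-1}\eqref{cov1} the map $p^*\colon H^1(X;\C)\to H^1(Y;\C)$ is injective; on the other hand, a transfer argument over $\Q$ identifies $H^1(X;\Q)$ with the $\Gamma$-invariants $H^1(Y;\Q)^{\Gamma}$, which by hypothesis is all of $H^1(Y;\Q)$. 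Hence $p^*$ is an isomorphism over $\C$ as well. This is the engine behind part \eqref{dp1}: since $p^*$ is a linear isomorphism on $H^1$ carrying the differential $\delta_{a}$ on $H^{*}(X;\C)$ into the corresponding differential $\delta_{p^*a}$ on $H^{*}(Y;\C)$ (naturality of cup products), and since $\RR^1_s$ depends only on the degree-$\le 2$ truncation, the inclusion $p^*(\RR^1_s(X))\subseteq \RR^1_s(Y)$ of Proposition \ref{prop:jumpcov-1} must in fact be an equality: any $b\in \RR^1_s(Y)$ is $p^*a$ for a unique $a\in H^1(X;\C)$, and one checks $\dim H^1(H^*(X;\C),\delta_a)\ge \dim H^1(H^*(Y;\C),\delta_{p^*a})\ge s$ using the behaviour of the relevant cochain complexes under the induced algebra map — the point being that $H^{\le 1}$ is preserved on the nose and one only needs a surjection on the relevant $H^2$-pieces, which again follows from the invariants computation. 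I would spell this last inequality out carefully, as it is the one genuinely non-formal step in \eqref{dp1}.

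For part \eqref{dp2}, I would start from the exact sequence of character groups. The covering $p$ corresponds to a surjection $\pi_1(X)\twoheadrightarrow Q$ onto the finite quotient $Q\cong \Gamma$ with kernel $\im(p_{\sharp})$; dualizing, $p^*\colon H^1(X;\C^*)\to H^1(Y;\C^*)$ has image the subgroup of characters that are trivial on (a finite-index subgroup of) $\pi_1(Y)$, and on identity components $p^*\colon H^1(X;\C^*)^0\to H^1(Y;\C^*)^0$ is dual to the isomorphism $p^*$ on $H^1(-;\C)$ post-composed with $\exp$; surjectivity with finite kernel is then immediate, the kernel being controlled by the torsion introduced when passing from $H^1(X;\Z)$ to $H^1(Y;\Z)$ (concretely, a quotient of $\Hom(Q,\C^*)$). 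For the statement about components of $\WW^1_s$, I would invoke $1$-formality of $X$: by the Tangent Cone Theorem available for $1$-formal spaces (as cited in \S\ref{subsec:cv-arr}), the components of $\WW^1_s(X)$ through $\bo$ are exactly $\exp$ of the linear subspaces comprising $\RR^1_s(X)$, and likewise for $Y$ — here one needs that $Y$ is $1$-formal too, which holds because $Y\to X$ is a finite regular cover and $1$-formality is inherited by finite covers (stated in \S\ref{subsec:arr-formal}), wait, the inheritance there goes the other way; instead I would argue that a finite regular cover of a $1$-formal space is $1$-formal via the transfer splitting of $A_{\PL}$, or simply cite \cite{DP-pisa} for exactly this point. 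Granting $1$-formality on both sides, the bijection on components through $\bo$ follows by exponentiating the identification $\RR^1_s(X)\cong \RR^1_s(Y)$ from part \eqref{dp1} and using that $p^*$ on $H^1(-;\C)$ matches $p^*$ on $H^1(-;\C^*)^0$ under the exponential maps.

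The main obstacle I anticipate is the passage from the inclusion $\RR^1_s(X)\subseteq \RR^1_s(Y)$ (which is formal) to the reverse inclusion, i.e.\ showing $p^*$ does not enlarge the resonance loci. The subtlety is that $p^*\colon H^{\ge 2}(X;\C)\to H^{\ge 2}(Y;\C)$ need not be surjective in general, so controlling $H^1$ of the Aomoto complex requires an additional argument; I would handle it exactly as in \cite[Thm.~2.8]{DP-pisa}, using that over $\Q$ the invariants $H^*(Y;\Q)^{\Gamma}$ recover $H^*(X;\Q)$ as a subalgebra, so the Aomoto complex $(H^*(X;\C),\delta_a)$ is a retract (as a complex) of $(H^*(Y;\C),\delta_{p^*a})$, whence its degree-$1$ cohomology has dimension at least that of the latter. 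Dually for \eqref{dp2} the only delicate point is verifying $1$-formality of $Y$; everything else is bookkeeping with the exponential diagram and the component correspondence already set up for arrangement-type spaces in \S\ref{subsec:cv-arr}.
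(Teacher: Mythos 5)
The paper does not actually prove this proposition; it imports it verbatim from \cite[Prop.~2.1, Cor.~2.2, Thm.~2.8]{DP-pisa}, so there is no in-paper proof to compare against. Judged on its own terms, your argument has a genuine gap at precisely the step you flag as the crux of part \eqref{dp1}. You claim the transfer exhibits the Aomoto complex $(H^*(X;\C),\delta_a)$ as a retract of $(H^*(Y;\C),\delta_{p^*a})$ and deduce that its $H^1$ ``has dimension at least that of the latter.'' But a retract yields the \emph{opposite} inequality: the projection formula makes $\tfrac{1}{|\Gamma|}\tau$ a chain-level retraction of $p^*$, so $H^q(H^*(X;\C),\delta_a)$ sits as a direct \emph{summand} inside $H^q(H^*(Y;\C),\delta_{p^*a})$, giving $\dim H^1(H^*(X;\C),\delta_a)\le \dim H^1(H^*(Y;\C),\delta_{p^*a})$ --- which is exactly the inclusion $p^*(\RR^1_s(X))\subseteq\RR^1_s(Y)$ already supplied by Proposition \ref{prop:jumpcov-1}, not the reverse inclusion you need. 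Likewise, ``one only needs a surjection on the relevant $H^2$-pieces'' misidentifies the requirement: what is actually needed is \emph{injectivity} of $p^*$ on $H^2$.

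The correct argument for part \eqref{dp1} is in fact shorter and gives equality on the nose. The transfer identity $\tau\circ p^*=|\Gamma|\cdot\id$ makes $p^*$ injective on $H^q(-;\C)$ in every degree, and the trivial-action hypothesis together with $H^1(X;\Q)\cong H^1(Y;\Q)^\Gamma$ makes it surjective, hence an isomorphism, in degree $1$. For $a\in H^1(X;\C)$ and $b=p^*a$, the commuting square with $\delta^1_a$ on top, $\delta^1_b$ on the bottom, $p^*$ an isomorphism on the left and $p^*$ injective on the right gives $p^*\colon\ker\delta^1_a\isom\ker\delta^1_b$; since moreover $\dim\im\delta^0_a=\dim\im\delta^0_b$ (both equal $1$ unless $a=b=0$, and $a=0\Leftrightarrow b=0$ by injectivity), the two Aomoto complexes have the same $H^1$, so $p^*$ identifies $\RR^1_s(X)$ with $\RR^1_s(Y)$ for every $s$. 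Your outline of part \eqref{dp2} is essentially sound once $1$-formality of $Y$ is secured; note that this is exactly Lemma \ref{lem:formal covers} in the present paper, which you could cite directly instead of gesturing at a transfer-splitting of $A^*_{\PL}$ or deferring wholesale to \cite{DP-pisa}.
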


Let us note that the homological triviality hypothesis of this proposition is definitely needed. 
For instance, if $X$ is a wedge of $n$ circles ($n\ge 2$), and $p\colon Y\to X$ 
is a $k$-fold cover ($k\ge 2$), then $\RR^1_1(X)=\C^{n}$, whereas 
$\RR^1_1(Y)=\C^{k(n-1)+1}$, and so the map $p^*\colon \RR^1_1(X)\to \RR^1_1(Y)$ 
is not surjective. 

\subsection{Cohomology jump loci in extensions}
\label{subsec:cjl-milnor-split}

Next, we recall some general results relating cohomology jump loci in 
group extensions. In \cite{Su-abexact}, we made 
a detailed analysis of how the characteristic and resonance varieties 
behave under certain split extensions with trivial monodromy 
action in homology. We summarize those results in the form 
that will be needed here. 

\begin{theorem}[\cite{Su-abexact}]
\label{thm:cv-extensions}
Let $\begin{tikzcd}[column sep=16pt]
\!\!1\ar[r] & K\ar[r, "\iota"]
& G \ar[r] & Q\ar[r] & 1\!\!
\end{tikzcd}$ 
be a split exact sequence of finitely generated groups. 
Assume $Q$ is abelian. Then,

\begin{enumerate}[itemsep=2pt]
\item \label{cj1}
If $Q$ acts trivially on $H_1(K;\Z)$, 
then the induced homomorphism $\iota^*\colon H^1(G;\C^*) \to H^1(K;\C^*)$ 
restricts to maps $\iota^* \colon \VV^1_s(G)\to \VV^1_s(K)$ 
for all $s\ge 1$; furthermore, 
$\iota^* \colon \VV^1_1(G)\to \VV^1_1(K)$ is a surjection.

\item \label{cj2}
If $Q$ is torsion-free and acts trivially on $H_1(K;\Q)$, 
then the map $\iota^*\colon H^1(G;\C^*)^0 \to H^1(K;\C^*)^0$ 
restricts to maps $\iota^* \colon \WW^1_s(G)\to \WW^1_s(K)$ 
for all $s\ge 1$; furthermore, 
$\iota^* \colon \WW^1_1(G)\to \WW^1_1(K)$ is a surjection.

\item \label{cj3}
If $Q$ acts trivially on $H_1(K;\Q)$ and $G$ is $1$-formal, 
then the map $\iota^*\colon H^1(G; \C) \to H^1(K; \C)$ 
restricts to maps $\iota^* \colon \RR^1_s(G)\to \RR^1_s(K)$ for all $s\ge 1$; 
furthermore, $\iota^* \colon \RR^1_1(G)\to \RR^1_1(K)$ is a surjection.
\end{enumerate}
\end{theorem}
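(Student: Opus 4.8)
The plan is to deduce all three assertions from a single homological computation---the behaviour of the Lyndon--Hochschild--Serre spectral sequence of the extension with rank-$1$ twisted coefficients---together with a character-extension device supplied by the splitting $s\colon Q\to G$. Since the sequence splits, $G\cong K\rtimes Q$ and $G_{\ab}\cong (K_{\ab})_Q\oplus Q$. In the situation of part \eqref{cj1} the $Q$-action on $K_{\ab}=H_1(K;\Z)$ is trivial, so $G_{\ab}\cong K_{\ab}\oplus Q$ and, dualizing, $H^1(G;\C^*)\cong H^1(K;\C^*)\times\T_Q$ with $\iota^*$ the (surjective) projection onto the first factor. Killing torsion, the weaker rational hypothesis of part \eqref{cj2} still gives $H^1(G;\C^*)^0\cong H^1(K;\C^*)^0\times\T_Q$, with $\iota^*$ restricting to a surjection on identity components whose kernel is the torus $\T_Q$. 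The crucial further remark is that triviality of the $Q$-action on $H_1(K;\Z)$ (resp.\ on $H_1(K;\Q)$, for identity-component characters) forces $Q$ to fix every such character of $K$; hence, via $s$, each $\rho_K$, twisted by an arbitrary character of $Q$, extends to a character $\rho$ of $G$ with $\iota^*\rho=\rho_K$.

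To establish the inclusions on jump loci, I would feed $\C_\rho$ into the Lyndon--Hochschild--Serre spectral sequence $E^2_{p,q}=H_p(Q;H_q(K;\C_\rho))\Rightarrow H_{p+q}(G;\C_\rho)$ and read off its five-term exact sequence. If $\rho|_K\ne\bo$ then $H_0(K;\C_\rho)=0$, the sequence collapses, and $H_1(G;\C_\rho)\cong\bigl(H_1(K;\C_{\rho|_K})\bigr)_Q$, which is a quotient of $H_1(K;\C_{\rho|_K})$; hence $\dim H_1(G;\C_\rho)\le\dim H_1(K;\C_{\rho|_K})$, so $\rho\in\VV^1_s(G)$ forces $\rho|_K\in\VV^1_s(K)$. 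The remaining locus $\rho|_K=\bo$, where $H_1(K;\C_\rho)=H_1(K;\Q)\otimes\C$ carries the trivial $Q$-action, is handled by direct inspection of the five-term sequence. Intersecting with identity components gives the inclusions $\iota^*(\WW^1_s(G))\subseteq\WW^1_s(K)$ for part \eqref{cj2}.

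For the surjectivity in depth $1$, given $\rho_K\in\VV^1_1(K)$ I would regard the non-zero finitely generated $\C[Q]$-module $M=H_1(K;\C_{\rho_K})$, whose support in $\T_Q$ is non-empty, and choose $\lambda\in\T_Q$ with $(M\otimes\C_\lambda)_Q\ne 0$; assembling $\rho_K$ and $\lambda$ via $s$ then gives $\rho\in\VV^1_1(G)$ with $\iota^*\rho=\rho_K$ (when $\rho_K=\bo$ take simply $\rho=\bo$, using $b_1(G)\ge b_1(K)\ge 1$). When $Q$ is torsion-free one has $\T_Q=\T_Q^0$, so starting from $\rho_K\in\WW^1_1(K)$ the assembled $\rho$ automatically lies in $H^1(G;\C^*)^0$, which gives $\iota^*\colon\WW^1_1(G)\surj\WW^1_1(K)$ for part \eqref{cj2}. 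Part \eqref{cj3} is then obtained by bootstrapping from part \eqref{cj2} via $1$-formality: since $G$ is $1$-formal the Tangent Cone Theorem gives $\RR^1_s(G)=\TC_{\bo}\VV^1_s(G)=\TC_{\bo}\WW^1_s(G)$, while $\TC_{\bo}\WW^1_s(K)\subseteq\RR^1_s(K)$ holds unconditionally; as $\iota^*$ on character tori is a homomorphism whose differential at $\bo$ is the linear map $\iota^*\colon H^1(G;\C)\to H^1(K;\C)$, passing to tangent cones at $\bo$ in $\iota^*(\WW^1_s(G))\subseteq\WW^1_s(K)$ yields $\iota^*(\RR^1_s(G))\subseteq\RR^1_s(K)$, and the same manipulation with the depth-$1$ surjection---using that $\iota^*$ on $H^1(-;\C^*)^0$ is a surjective morphism of tori, hence carries tangent cones onto tangent cones---identifies $\iota^*(\RR^1_1(G))$ with $\TC_{\bo}\WW^1_1(K)$.

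The subtle points are two. First, triviality of the monodromy is known only on $H_1$, not on the twisted homology modules $H_q(K;\C_\rho)$, so the inclusion statements genuinely rely on the $E^2$-page being concentrated enough, when $\rho|_K\ne\bo$, to make $H_1(G;\C_\rho)$ a mere \emph{quotient} of $H_1(K;\C_{\rho|_K})$; the trivial-restriction locus has to be peeled off and handled by a direct Betti-number computation. Second---and this is the real obstacle---in part \eqref{cj3} only $G$, not $K$, is assumed $1$-formal, so the argument above identifies $\iota^*(\RR^1_1(G))$ only with $\TC_{\bo}\WW^1_1(K)$; upgrading this to a surjection onto all of $\RR^1_1(K)$ requires showing that $1$-formality of $G$ propagates to $K$ under the present hypotheses, equivalently that the Tangent Cone equality holds for $K$ in degree $1$ and depth $1$. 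Establishing that propagation---for instance by analysing the Malcev Lie algebra of the semidirect product $K\rtimes Q$ in the presence of trivial rational $H_1$-monodromy---is the heart of the resonance statement.
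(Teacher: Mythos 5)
The paper does not actually prove this theorem; it is imported wholesale from the cited reference, so I can only assess your reasoning on its own terms. Your overall architecture is sensible and captures several correct ideas: the Lyndon--Hochschild--Serre collapse when $\rho\vert_K\ne\bo$ giving $H_1(G;\C_\rho)\cong H_1(K;\C_{\rho\vert_K})_Q$; the observation that triviality of the $Q$-action on $K_{\ab}$ (resp.\ on $K_{\abf}$) lets one extend a character of $K$ to a character $\rho=(\rho_K,\lambda)$ of $G=K\rtimes Q$; and the Nakayama/support argument over $\C[Q]$ to locate a $\lambda$ with $(H_1(K;\C_{\rho_K})\otimes\C_\lambda)_Q\ne 0$, which gives the depth-one surjectivity in parts \eqref{cj1} and \eqref{cj2}. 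The tangent-cone step deducing the \emph{inclusion} of part \eqref{cj3} from part \eqref{cj2} is also sound: differentials of morphisms do carry tangent cones into tangent cones, and under $1$-formality of $G$ one has $\TC_{\bo}\WW^1_s(G)=\RR^1_s(G)$.

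That said, two points are genuine gaps. First, the locus $\rho\vert_K=\bo$ is not really ``handled by direct inspection'' --- it is where the stated inclusion is most delicate. For $\rho\ne\bo$ factoring through $Q$ the $E^2$-term vanishes and there is nothing to prove, but at $\rho=\bo$ one has $b_1(G)=b_1(K)+\rank Q$, so $\bo\in\VV^1_s(G)\setminus\VV^1_s(K)$ for $b_1(K)<s\le b_1(G)$; you should at least acknowledge that the inclusion has content only in the range $s\le b_1(K)$ (and likewise for the resonance version --- note the paper's own Example \ref{ex:bb-path4}, where $\bz\in\RR^1_4(G)$ but $\RR^1_4(K)=\emptyset$). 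Second, your route to the surjectivity in part \eqref{cj3} is circular as stated: you first assert that a surjective morphism of tori ``carries tangent cones onto tangent cones,'' which is false in general (images of subvarieties can acquire larger tangent cones), and it is only after invoking $1$-formality of $G$ that $\WW^1_1(G)$ near $\bo$ is a union of subtori, making the image computation valid --- but that yields only $\iota^*(\RR^1_1(G))=\TC_{\bo}\WW^1_1(K)$, and you then correctly note the remaining identification $\TC_{\bo}\WW^1_1(K)=\RR^1_1(K)$ would require a Tangent Cone Theorem for $K$, hence effectively $1$-formality of $K$. Establishing that $1$-formality descends from $G$ to $K$ along an infinite cyclic quotient with trivial rational monodromy is itself a non-trivial theorem (the analogous statement for \emph{finite} covers is Lemma \ref{lem:formal covers}), and your proposal leaves it as a conjecture. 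The actual proof in the cited source works instead through the Alexander invariant $B_G=G'/G''\otimes\C$ and, on the infinitesimal side, the holonomy Lie algebra, exploiting the isomorphism $\gr_{\ge2}(G)\otimes\Q\cong\gr_{\ge2}(K)\otimes\Q$ (cf.\ Theorem \ref{thm:trivial-mono-q}) rather than a spectral-sequence-plus-tangent-cone bootstrap; that route bypasses the formality-descent issue entirely.
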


All these results are sharp. For instance, regarding part \eqref{cj3}, 
we make the following observation: In depth $s>1$, the map 
$\iota^* \colon \RR^1_s(G)\to \RR^1_s(K)$ is 
not necessarily a surjection, while in depth $s=1$ it is not necessarily 
an isomorphism. We illustrate both assertions with an example 
(see \cite{PS-jlms07, PS-adv09} for the necessary background).

\begin{example}
\label{ex:bb-path4}
Let $G=\langle a_1,\dots, a_4 \mid [a_1,a_2]=[a_2,a_3]=[a_3,a_4]=1\rangle$ be 
the right-angled Artin group associated to a path $\Gamma$ on $4$ vertices, and 
let $K$ be the corresponding Bestvina--Brady group. We then have 
an exact sequence $1\to K\xrightarrow{\iota} G \xrightarrow{\nu} \Z \to 1$, 
where $\nu$ is the homomorphism sending each generator $a_i$ to $1$. 
Since $\Gamma$ is a tree, the group $K$ is free (of rank $3$), 
and so $\RR^1_1(K)=\RR^1_2(K)=\C^3$. 
On the other hand, $\RR^1_1(G)=\{x_2=0\}\cup \{x_3=0\}$ and 
$\RR^1_2(G)=\{x_2=x_3=x_4=0\}\cup \{x_1=x_2=x_3=0\}$. 
Thus, the map $\iota^* \colon \RR^1_s(G)\to \RR^1_s(K)$ is 
not a surjection for $s=2$ and is not an isomorphism for $s=1$.
\end{example}

\subsection{Cohomology jump loci of Milnor fibers}
\label{subsec:cjl-milnor}

As before, let $(\A,\bm)$ be a multi-arrange\-ment. 
Denote by $\iota_{\bm}\colon F_{\bm}\inj M$ the inclusion map 
of the Milnor fiber $F_{\bm}=F_{\bm}(\A)$ into the complement $M=M(\A)$ 
and by $\sigma_{\bm}=\pi\circ \iota_{\bm}\colon F_{\bm} \to U$ the restriction 
of the Hopf map $\pi\colon M\to U=\P(M)$ to $F_{\bm}$. 
Applying Proposition \ref{prop:jumpcov-1} to the finite, regular cover 
$\sigma_{\bm}\colon F_{\bm} \to U$,  
we obtain the following immediate corollary. 

\begin{corollary}
\label{cor:cv-mf}
For all $q, s\ge 1$, the following hold.
\begin{enumerate}[itemsep=2pt, topsep=-1pt]
\item  \label{res-f-gen}
The induced morphism $\sigma_{\bm}^*\colon H^1(U;\C)\inj H^1(F_{\bm};\C)$ 
restricts to maps $\RR^q_s(U)\inj \RR^q_s(F_{\bm})$.
\item \label{cv-f-geb}
The morphism $\sigma_{\bm}^*\colon H^1(U;\C^*)\to H^1(F_{\bm};\C^*)$  
restricts to maps $\VV^q_s(U)\to \VV^q_s(F_{\bm})$.
\end{enumerate}
\end{corollary}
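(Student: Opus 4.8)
The statement to prove is Corollary~\ref{cor:cv-mf}, which asserts that for the finite regular cover $\sigma_{\bm}\colon F_{\bm}\to U$, the induced map on first cohomology is injective and carries resonance varieties into resonance varieties, while the induced map on the character torus carries characteristic varieties into characteristic varieties, in all degrees $q$ and depths $s$.

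My plan is simply to invoke Proposition~\ref{prop:jumpcov-1} with the specific cover at hand. The key observation is that $\sigma_{\bm}\colon F_{\bm}\to U$ is a \emph{finite regular cover}: as explained in \S\ref{subsec:mf cover}, the monodromy diffeomorphism $h$ generates a free $\Z_N$-action on $F_{\bm}$ with quotient $U=\P(M)$, and $\sigma_{\bm}$ is the associated $N$-fold covering projection, classified by the epimorphism $\chi_{\bm}\colon \pi_1(U)\twoheadrightarrow \Z_N$ of \eqref{eq:chi}. Both $F_{\bm}$ and $U$ are connected finite-type CW-complexes (each has the homotopy type of a finite CW-complex, being a Stein domain, respectively an arrangement-type complement), so the hypotheses of Proposition~\ref{prop:jumpcov-1} are met.

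First I would state that $\sigma_{\bm}$ is a finite regular cover with deck group $\Z_N$. Then part~\eqref{res-f-gen} is immediate from Proposition~\ref{prop:jumpcov-1}\eqref{cov1}: the pullback $\sigma_{\bm}^*\colon H^1(U;\C)\to H^1(F_{\bm};\C)$ is injective and restricts to maps $\RR^q_s(U)\to\RR^q_s(F_{\bm})$ for all $q\ge 0$ and $s\ge 1$, which since $\sigma_{\bm}^*$ is injective are monomorphisms onto their images. Likewise part~\eqref{cv-f-geb} follows from Proposition~\ref{prop:jumpcov-1}\eqref{cov2}, which gives $\sigma_{\bm}^*\colon \VV^q_s(U)\to\VV^q_s(F_{\bm})$ for all $q\ge 0$ and $s\ge 1$.

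There is really no obstacle here: this is a direct specialization of a cited proposition, and the only thing worth spelling out is the identification of $\sigma_{\bm}$ as a finite regular cover, which has already been established in \S\ref{subsec:mf cover}. Accordingly the proof is a one-line deduction.

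\begin{proof}
As recalled in \S\ref{subsec:mf cover}, the monodromy diffeomorphism $h$ generates a free action of $\Z_N$ on the Milnor fiber $F_{\bm}$ with orbit space $U=\P(M)$, and the quotient map $\sigma_{\bm}\colon F_{\bm}\to U$ is the corresponding regular $N$-fold cyclic cover, classified by the epimorphism $\chi_{\bm}\colon \pi_1(U)\twoheadrightarrow \Z_N$ of \eqref{eq:chi}. Both spaces have the homotopy type of a finite CW-complex, so $\sigma_{\bm}$ satisfies the hypotheses of Proposition~\ref{prop:jumpcov-1}. Part~\eqref{cov1} of that proposition gives that $\sigma_{\bm}^*\colon H^1(U;\C)\to H^1(F_{\bm};\C)$ is injective and restricts to maps $\RR^q_s(U)\to\RR^q_s(F_{\bm})$ for all $q\ge 0$, $s\ge 1$; since $\sigma_{\bm}^*$ is injective, these restrictions are monomorphisms, which proves~\eqref{res-f-gen}. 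Part~\eqref{cov2} gives that $\sigma_{\bm}^*\colon H^1(U;\C^*)\to H^1(F_{\bm};\C^*)$ restricts to maps $\VV^q_s(U)\to\VV^q_s(F_{\bm})$ for all $q\ge 0$, $s\ge 1$, which proves~\eqref{cv-f-geb}.
\end{proof}
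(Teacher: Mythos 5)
Your proof is correct and takes exactly the same route as the paper: the paper states that the corollary follows by "applying Proposition~\ref{prop:jumpcov-1} to the finite, regular cover $\sigma_{\bm}\colon F_{\bm}\to U$," which is precisely your argument. Your version merely spells out why $\sigma_{\bm}$ qualifies as a finite regular cover, which is a reasonable addition but not a different approach.
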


Consider now the usual Milnor fiber, $F=F(\A)$, and the 
finite cyclic cover $\sigma\colon F\to U$. In general, 
the morphism $\sigma^*\colon \V^1_1(U) \to \V^1_1(F)$ from 
Corollary \ref{cor:cv-mf}, part \eqref{cv-f-geb} is not surjective. 
For instance, suppose $\A$ admits a non-trivial, reduced multinet, 
and let $T$ be the corresponding component of $\V^1_1(U)$. It is 
then shown in \cite[Cor.~3.3]{DP-pisa} that $\V^1_1(F)$ has a component 
$W$ passing through the identity and containing $\sigma^*(T)$ 
as a proper subset. We illustrate this phenomenon with a concrete 
example.

\begin{example}
\label{ex:cv mf braid}
Let $\A$ be the braid arrangement from Example \ref{ex:mf braid arr}.  
Recall that $\VV^1_1(U)\subset (\C^*)^5$ has four local components, 
$T_1,\dots, T_4$, corresponding to the four triple points of $\bar{\A}$, 
and an essential, $2$-dimensional component $T$, corresponding to 
the $(3,2)$-net depicted in Figure~\ref{fig:braid}.  Let 
$\psi\colon U\to S=\Sigma_{0,3}$ 
be the pencil defined by this net, so that $T=\psi^*(H^1(S;\C^*))$. 
Note that $S=U(\B)$, where $\B$ is the arrangement in $\C^2$ 
defined by the polynomial $xy(x-y)$; therefore, the Milnor fiber 
of this arrangement, $\hat{S}=F(\B)$, may be identified with 
$\Sigma_{1,3}=S^1\times S^1 \mysetminus \{\text{$3$ points}\}$ 
(see Example \ref{ex:mf pencil}). Let $\nu \colon \hat{S} \to S$ 
be the corresponding $\Z_3$-cover, and consider the pull-back diagram,
\begin{equation}
\label{eq:3-fold-braid}
\begin{tikzcd}[column sep=32pt, row sep=28pt]
\hat{U}\ar[d, "\tau"]  \ar[r, "\hat{\psi}"] & \hat{S}\phantom{.} \ar[d, "\nu"]
\\
U \ar[r, "\psi"] & S .
\end{tikzcd}
\end{equation}

In the above, $\tau\colon \hat{U}\to U$ is the pull-back along $\psi$ 
of the cover $\nu \colon \hat{S} \to S$. By construction, $\tau$ is the  
$\Z_3$-cover defined by the diagonal homomorphism 
$\pi_1(U)\to \Z_3$. It is readily seen  
that $H_1(\hat{U};\Z)=\Z^7$. By \cite[Prop.~2]{Zu}, the map 
$\hat{\psi}$ is an (irrational) pencil on $\hat{U}$; therefore, the $4$-dimensional 
torus $W_0=\hat{\psi}^*(H^1(\hat{S};\C^*))$ is a component of the 
characteristic variety $\VV^1_1(\hat{U})\subset (\C^*)^7$. 

Finally, let $F=F(\A)$ be the Milnor fiber of $\A$. 
Note that the $\Z_6$-cover $\sigma\colon F\to U$ factors as the composite 
$F\xrightarrow{\kappa} \hat{U}\xrightarrow{\tau} U$, where $\kappa$ is a 
$2$-fold cover. Therefore, the characteristic variety $\VV^1_1(F)$ 
has four $2$-dimensional components, $\sigma^*(T_1), \dots ,\sigma^*(T_4)$, 
as well as a $4$-dimensional component, 
$W=\kappa^*(W_0)$, which strictly contains $\sigma^*(T)$.  
Direct computation shows that $\VV^1_1(F)$ has no other irreducible components. 
\end{example}

\subsection{Arrangements with trivial algebraic monodromy}
\label{subsec:cjl-mf-triv}
We return now to the general case of a multi-arrangement $(\A,\bm)$. 
As usual, let $F_{\bm}$ be the Milnor fiber of the multi-arrangement, 
and let $\sigma_{\bm}\colon F\to U$ be the corresponding $\Z_N$-cover, 
where $N=\sum_{H\in \A} m_H$. Using the machinery developed above, 
we obtain the following theorem, which sharpens results from \cite{DP-pisa} 
in a way that will be needed later on.

\begin{theorem}
\label{thm:cjl-mf-trivialmono}
Suppose the monodromy $h\colon F_{\bm}\to F_{\bm}$ 
induces the identity on $H_1(F_{\bm};\Q)$.  Then, 
\begin{enumerate}[itemsep=2pt, topsep=-1pt]
\item  \label{res-f-bis}
The induced homomorphism $\sigma^*_{\bm}\colon H^1(U;\C)\to H^1(F_{\bm};\C)$ 
is an isomorphism that identifies $\RR^1_s(U)$ with $\RR^1_s(F_{\bm})$, for all $s\ge 1$.
\item \label{cv-f-bis}
The induced homomorphism  $\sigma^*_{\bm}\colon H^1(U;\C^*)\to H^1(F_{\bm};\C^*)^0$ 
is a surjection with kernel isomorphic to $\Z_N$. Moreover, 
\begin{enumerate}
\item \label{pa}
For each $s\ge 1$, the map $\sigma^*_{\bm}$ establishes a bijection 
between the sets of irreducible components of $\VV^1_s(U)$ and 
$\WW^1_s(F_{\bm})$ that pass through the identity. 
\item \label{pb}
The map 
$\sigma^*_{\bm} \colon \VV^1_1(U)\to \WW^1_1(F_{\bm})$ is a surjection.
\end{enumerate}
\end{enumerate}
\end{theorem}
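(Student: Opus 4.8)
The plan is to deduce Theorem \ref{thm:cjl-mf-trivialmono} by combining the cover-theoretic results of Dimca--Papadima (Propositions \ref{prop:jumpcov-1} and \ref{prop:jumpcov-2}) with the split-extension machinery of Theorem \ref{thm:cv-extensions}, applied in a way that exploits two structural facts about the Milnor fibration: that $\sigma_{\bm}\colon F_{\bm}\to U$ is a regular $\Z_N$-cover classified by the epimorphism $\chi_{\bm}\colon \pi_1(U)\surj \Z_N$ of \eqref{eq:chi}, and that $U$ (hence $F_{\bm}$, being a finite regular cover of a formal space) is $1$-formal. The hypothesis that $h_*$ is the identity on $H_1(F_{\bm};\Q)$ says precisely that the deck group $\Z_N$ acts trivially on $H_1(F_{\bm};\Q)$, so Proposition \ref{prop:jumpcov-2} applies directly to $\sigma_{\bm}$.

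First I would settle part \eqref{res-f-bis}. By Proposition \ref{prop:jumpcov-2}\eqref{dp1}, the map $\sigma_{\bm}^*\colon H^1(U;\C)\to H^1(F_{\bm};\C)$ is an isomorphism identifying $\RR^1_s(U)$ with $\RR^1_s(F_{\bm})$ for all $s\ge 1$; nothing more is needed here. Next, for the first assertion of \eqref{cv-f-bis}, I would identify the kernel of $\sigma_{\bm}^*\colon H^1(U;\C^*)^0\to H^1(F_{\bm};\C^*)^0$ explicitly. For any connected regular cover $p\colon Y\to X$ with deck group $Q$ acting trivially on $H_1(Y;\Q)$, the kernel of $p^*$ on identity components is the image of $\widehat{Q}=\Hom(Q,\C^*)$ under the map dual to $\pi_1(X)\surj Q$; in our case $Q=\Z_N$ and the classifying epimorphism is $\chi_{\bm}$, which is surjective, so $\chi_{\bm}^*\colon \widehat{\Z_N}\inj H^1(U;\C^*)$ is injective and $\ker(\sigma_{\bm}^*)\cong\Z_N$. (This is essentially the discussion around \eqref{eq:decomp}, where $\im(\chi^*)\cong\Z_N$ and the restriction of $\sigma_{\bm}^*$ to it is trivial by the transfer identification of invariants with $H_*(U)$.) Surjectivity onto $H^1(F_{\bm};\C^*)^0$ is the content of Proposition \ref{prop:jumpcov-2}\eqref{dp2}.

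Part \eqref{pa} then follows from the second statement of Proposition \ref{prop:jumpcov-2}\eqref{dp2}, applied with $X=U$ (which is $1$-formal) and $Y=F_{\bm}$: the map $\sigma_{\bm}^*$ establishes a bijection between the irreducible components of $\WW^1_s(U)$ and of $\WW^1_s(F_{\bm})$ passing through the identity. Since $U=\P(M)$ is quasi-projective, the tangent cone and formality results cited in the introduction give $\WW^1_s(U)=\VV^1_s(U)\cap H^1(U;\C^*)^0$, and one must check that, near the identity, passing to $U$'s components passing through $\bo$ loses nothing relative to all of $\VV^1_s(U)$ passing through $\bo$; but components of $\VV^1_s(U)$ through $\bo$ are subtori, hence lie in $H^1(U;\C^*)^0$, so $\VV^1_s(U)$ and $\WW^1_s(U)$ have the same components through the identity, and \eqref{pa} is exactly the cited bijection. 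Finally, for \eqref{pb}, the cleanest route is to realize $\sigma_{\bm}\colon F_{\bm}\to U$ at the level of fundamental groups as a split extension. Writing $K=\pi_1(F_{\bm})$, $G=\pi_1(U)$, and $Q=\Z_N$, the sequence $1\to K\to G\xrightarrow{\chi_{\bm}} \Z_N\to 1$ is exact; and although $\Z_N$ is not torsion-free, part \eqref{cj1} of Theorem \ref{thm:cv-extensions} requires only that $Q$ be abelian and act trivially on $H_1(K;\Z)$ — here it acts trivially on $H_1(K;\Q)$ by hypothesis, so to invoke \eqref{cj1} verbatim one needs triviality over $\Z$. The honest move is therefore to work with $\iota_{\bm}^*$ directly: by Proposition \ref{prop:jumpcov-1}\eqref{cov2} the restriction $\sigma_{\bm}^*\colon \VV^1_1(U)\to\VV^1_1(F_{\bm})$ has image in $\WW^1_1(F_{\bm})$ (since $\sigma_{\bm}^*$ lands in the identity component by \eqref{cv-f-bis}), and surjectivity of $\sigma_{\bm}^*\colon \VV^1_1(U)\to\WW^1_1(F_{\bm})$ follows from the surjectivity statement in Proposition \ref{prop:jumpcov-2}\eqref{dp2} together with the fact that every component of $\VV^1_1(F_{\bm})$ meeting $H^1(F_{\bm};\C^*)^0$ already does so — i.e., $\WW^1_1(F_{\bm})=\VV^1_1(F_{\bm})\cap H^1(F_{\bm};\C^*)^0$ is covered by components of $\VV^1_1(U)$ via the component bijection of \eqref{pa} in depth $s=1$, after noting that a depth-$1$ character on $F_{\bm}$ in the identity component pulls back from a depth-$1$ character on $U$ by \eqref{eq:decomp}.

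\emph{The main obstacle} I anticipate is the bookkeeping in \eqref{pb}: the component bijection of \eqref{pa} controls only components through the identity, whereas $\WW^1_1(F_{\bm})$ may a priori have components not passing through $\bo$, and one must rule these out (or show they too are hit by $\sigma_{\bm}^*$). The resolution is the homology-of-cyclic-covers decomposition \eqref{eq:decomp}: a character $\xi\in\WW^1_1(F_{\bm})$ in the identity component with $\depth_1(\xi)\ge 1$ corresponds, under the eigenspace decomposition of $H_1(F_{\bm};\C)$ as a $\C[\Z_N]$-module, to a character $\eta$ on $U$ with $\eta\in\VV^1_1(U)$ and $\sigma_{\bm}^*(\eta)$ in the same component as $\xi$ — so surjectivity is forced. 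Making this correspondence precise, and checking it is compatible with the identifications in \eqref{eq:vux} and \eqref{eq:mono-map}, is the one genuinely technical point; everything else is an assembly of the quoted propositions.
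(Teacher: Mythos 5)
Parts \eqref{res-f-bis}, the kernel computation, and \eqref{pa} of your argument are correct and match the paper's approach: \eqref{res-f-bis} and \eqref{pa} are read off from Proposition \ref{prop:jumpcov-2} (using that $U$ is formal for the latter), and the kernel of $\sigma_{\bm}^*$ is $\chi_{\bm}^*(\Hom(\Z_N,\C^*))\cong\Z_N$ since $\pi_1(F_{\bm})=\ker\chi_{\bm}$ and $\chi_{\bm}$ is surjective; that $H^1(U;\C^*)$ is already connected and maps into the identity component of $H^1(F_{\bm};\C^*)$ makes the bookkeeping about identity components harmless.

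The genuine gap is in \eqref{pb}, and you have in fact correctly diagnosed it yourself: the component bijection of \eqref{pa} says nothing about components of $\WW^1_1(F_{\bm})$ that do not pass through $\bo$, and your attempted rescue does not close it. Appealing to Proposition \ref{prop:jumpcov-2}\eqref{dp2} gives surjectivity of $\sigma_{\bm}^*$ on identity components of the character groups, not surjectivity of the restriction to characteristic varieties. And the decomposition \eqref{eq:decomp} concerns the eigenspace structure of $H_*(F_{\bm};\C)$ at the trivial local system, i.e.\ the powers of the single character $\rho_{\bm}$; it does not produce, for an arbitrary $\xi\in\WW^1_1(F_{\bm})$, a preimage in $\VV^1_1(U)$. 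Your instinct to invoke Theorem \ref{thm:cv-extensions} was the right one, but you tried the wrong extension: with $Q=\Z_N$ neither hypothesis of \eqref{cj1} nor \eqref{cj2} is met (no $\Z$-triviality, and $\Z_N$ is not torsion-free). The paper instead uses the split exact sequence $1\to\pi_1(F_{\bm})\to\pi_1(M)\xrightarrow{\mu_\bm}\Z\to 1$ from diagram \eqref{eq:fmz} — quotient $\Z$, which \emph{is} torsion-free — so Theorem \ref{thm:cv-extensions}\eqref{cj2} applies under the $\Q$-triviality hypothesis and yields a surjection $\iota_{\bm}^*\colon\VV^1_1(M)\surj\WW^1_1(F_{\bm})$ (note $\WW^1_1(M)=\VV^1_1(M)$ since $H_1(M;\Z)$ is torsion-free). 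Precomposing with the isomorphism $\pi^*\colon\VV^1_1(U)\isom\VV^1_1(M)$ of Proposition \ref{prop:cjl-MU}\eqref{mu2}, and using $\sigma_{\bm}=\pi\circ\iota_{\bm}$, gives \eqref{pb}. The key idea you were missing is to route the argument through $M$ rather than through the $\Z_N$-extension of $\pi_1(U)$.
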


\begin{proof}
We start with some preliminary observations. From the discussion in 
Section \ref{subsec:mf cover}, we know that the map 
$\sigma_{\bm}\colon F_{\bm} \to U$ is a regular $\Z_N$-cover, 
corresponding to the exact sequence
\begin{equation}
\label{eq:ses-fu}
\begin{tikzcd}[column sep=34pt]
1  \ar[r] &[-10pt] \pi_1(F_{\bm}) \ar[r, "(\sigma_{\bm})_{\sharp}"]
& \pi_1(U) \ar[r, "\chi_\bm"] & \Z_N\ar[r] &[-10pt] 1 .
\end{tikzcd}
\end{equation}
As noted in Section \ref{subsec:tam}, the assumption that 
$h\colon F_{\bm}\to F_{\bm}$ induces the identity on 
$H_1(F_{\bm};\Q)$ is equivalent to 
$H_1(F_{\bm};\Q)\cong H_1(U;\Q)$. It follows that 
we have an exact sequence,
\begin{equation}
\label{eq:ses-fu-tm}
\begin{tikzcd}[column sep=34pt]
0  \ar[r] &[-10pt] H_1(F_{\bm};\Z)/\Tors \ar[r, "(\sigma_{\bm})_{*}"]
& H_1(U;\Z) \ar[r, "(\chi_\bm)_*"] & \Z_N\ar[r] &[-10pt] 0 .
\end{tikzcd}
\end{equation}

We now proceed with the proof. 
Claim \eqref{res-f-bis} follows directly from Proposition \ref{prop:jumpcov-2}, 
part \eqref{dp1}. To prove the first assertion of Claim \eqref{cv-f-bis}, we apply 
the functor $H^1(-;\C^*)=\Hom(-,\C^*)$ to the exact sequence \eqref{eq:ses-fu-tm}. 
Since the abelian groups $\C^*$ is divisible, and thus an injective $\Z$-module, 
we obtain an exact sequence, 
\begin{equation}
\label{eq:ses-fu-tm-dual}
\begin{tikzcd}[column sep=32pt]
0  &[-10pt]  \ar[l] H^1(F_{\bm};\C^*)^0
& H^1(U;\C^*)  \ar[l, "\sigma_{\bm}^{*}" ' , pos=0.4]  
& H^1(\Z_N;\C^*) \ar[l, "\chi_\bm^*" ', pos=0.4] &[-10pt] \ar[l] 0 .
\end{tikzcd}
\end{equation}
Identifying the group $H^1(\Z_N;\C^*)$ with its Pontryagin dual, $\Z_N$, 
completes the proof of the first part of Claim \eqref{cv-f-bis}. 

Since the space $U$ is formal, Claim  \eqref{pa} 
follows from Proposition \ref{prop:jumpcov-2}, part \eqref{dp2}.

Finally, recall from diagram \eqref{eq:fmz} that we have a (split) exact sequence, 
\begin{equation}
\label{eq:ses-fm}
\begin{tikzcd}[column sep=34pt]
1  \ar[r] &[-10pt] \pi_1(F_{\bm}) \ar[r, "(\iota_{\bm})_{\sharp}"]
& \pi_1(M) \ar[r, "\mu_\bm"] & \Z\ar[r] &[-10pt] 1 .
\end{tikzcd}
\end{equation}
Our hypothesis on the monodromy $h$ says that $\Z$ acts 
trivially on $H_1(F_{\bm};\Q)$. Thus, we may 
apply Theorem \ref{thm:cv-extensions} and conclude 
that the morphism $\iota_{\bm}^*\colon H^1(M;\C^*) \to H^1(F_{\bm};\C^*)^0$ 
restricts to a surjection, $\iota_{\bm}^* \colon \VV^1_1(M)\surj \WW^1_1(F_{\bm})$.
On the other hand, as shown in Proposition \ref{prop:cjl-MU}, part \eqref{mu2}, 
the map  $\pi^*\colon H^1(U;\C^*)\to H^1(M;\C^*)$ restricts to an isomorphism,  
$\pi^*\colon \VV^1_1(U)\isom \VV^1_1(M)$. Since $\sigma_{\bm}=\pi\circ \iota_{\bm}$, 
Claim \eqref{pb} follows, and the proof is complete.
\end{proof}

\section{Abelian duality and propagation of cohomology jump loci}
\label{sect:abel-prop}

\subsection{Abelian duality spaces}
\label{subsec:abel-dual}
Let $X$ be a space having the homotopy type of a connected, finite CW-complex 
of dimension $m$. Without loss of generality, we may assume $X$ has a 
single $0$-cell, say, $x_0$. Letting $G=\pi_1(X,x_0)$ be 
the fundamental group of $X$, the group ring of its abelianization, 
$R=\Z[G_{\ab}]$, may be viewed as a module over $\Z{G}$ via 
extension of scalars. Inspired by the classical notion of ``duality group" due 
to Bieri and Eckmann, the following concept was introduced 
in \cite{DSY17}.

We say that $X$ is an {\em abelian duality space}\/ (for short, 
{\em $\ab$-duality space}) of dimension $m$ if $H^q(X;R)=0$ for $q\ne m$ 
and $H^m(X,R)$ is non-zero and torsion-free. In that case, for all (left) 
$R$-modules $A$ and all $q\ge 0$, we have  isomorphisms 
\begin{equation}
\label{eq:abdualityiso}
H^q(X;A)\cong \Tor^{R}_{m-q}(D;A)\cong H_{m-q}(G_{\ab}; D\otimes_{\Z} A),
\end{equation}
where $D=H^m(X;R)$, viewed as an $R$-module. 
Consequently, if $Y\to X$ is a connected, regular abelian cover, classified 
by an epimorphism $G \xrightarrow{\ab} G_{\ab} \xrightarrow{\chi} H$, 
where $H$ is a (finitely generated) abelian group, then   
$H_{q}(Y;\Z) \cong \Ext_{R}^{m-q}(D,H)$, for all $q\ge 0$.

Motivated by our work in \cite{Su-abexact}, we adapt this definition 
to a related context. Let $G_{\abf}=G_{\ab}/\Tors$ 
be the maximal torsion-free abelian quotient of $G$. 
We say that $X$ is a {\em torsion-free abelian duality space}\/ 
(for short, {\em $\abf$-duality space}) 
of dimension $m$ if the above conditions are satisfied with 
$R=\Z[G_{\ab}]$ replaced by $\Z[G_{\abf}]$. 
Clearly, if $X$ is an abelian duality space and $G_{\ab}=H_1(X;\Z)$ is 
torsion-free, then $X$ is a torsion-free abelian duality space.

\subsection{Formality}
\label{subsec:mf-formal} 
Recall that both an arrangement complement, $M=M(\A)$, and its 
projectivization, $U=\P(M)$, are (rationally) formal spaces. Moreover, 
for every choice of multiplicities $\bm$ on $\A$, the Milnor fiber 
$F_{\bm}$ is a cyclic, regular cover of $U$. This raises the question 
of whether these Milnor fibers are also formal spaces---or, 
at least $q$-formal, for some $q\ge 1$. The following lemma 
gives a sufficient condition for this to happen.

\begin{lemma}[\cite{DP-pisa}]
\label{lem:formal covers}
Let $Y \to X$ be a finite, regular cover, and 
suppose the group of deck-transformations acts trivially 
on $H_i(Y; \Q)$, for all $i\le q$. 
Then $Y$ is $q$-formal if and only if $X$ is $q$-formal.
\end{lemma}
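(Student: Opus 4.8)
The plan is to prove both directions of the equivalence at once by exhibiting a zig-zag of CDGA maps comparing the Sullivan models of $X$ and $Y$ in the relevant range, using the transfer map available for a finite regular cover. First I would recall the standard fact that for a finite, regular cover $p\colon Y\to X$ with deck group $\Gamma$ of order $k$, there is a transfer (or averaging) map $\tau\colon A^*_{\PL}(Y)\to A^*_{\PL}(X)$ on rational polynomial forms with the property that $\tau\circ p^*=\id$ on $A^*_{\PL}(X)$ (after dividing by $k$, which is legitimate over $\Q$). More precisely, $p^*$ identifies $A^*_{\PL}(X)$ with the subalgebra $A^*_{\PL}(Y)^{\Gamma}$ of $\Gamma$-invariant forms, and the averaging operator $a\mapsto \frac{1}{k}\sum_{g\in\Gamma} g^*a$ is a CDGA retraction onto this invariant subalgebra. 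Passing to cohomology, $p^*\colon H^*(X;\Q)\to H^*(Y;\Q)^{\Gamma}$ is an isomorphism; the hypothesis that $\Gamma$ acts trivially on $H_i(Y;\Q)$ for $i\le q$ says precisely that $H^i(Y;\Q)^{\Gamma}=H^i(Y;\Q)$ in that range, so $p^*\colon H^i(X;\Q)\to H^i(Y;\Q)$ is an isomorphism for $i\le q$ and, by the same token together with the universal-coefficient/transfer bookkeeping, a monomorphism in degree $q+1$.

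Next I would run the two implications. Suppose $X$ is $q$-formal: then there is a zig-zag of CDGA quasi-isomorphisms in degrees $\le q$ (and monomorphisms in degree $q+1$) connecting $A^*_{\PL}(X)$ to $H^*(X;\Q)$. Applying $p^*$ to the entire zig-zag and pre/post-composing with the inclusion $p^*\colon A^*_{\PL}(X)\hookrightarrow A^*_{\PL}(Y)$ and with $p^*\colon H^*(X;\Q)\to H^*(Y;\Q)$ produces a zig-zag from $A^*_{\PL}(Y)$ to $H^*(Y;\Q)$; one checks, using the cohomology statement from the previous paragraph, that each map in the augmented zig-zag is a $q$-equivalence, so $Y$ is $q$-formal. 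Conversely, suppose $Y$ is $q$-formal, with a $q$-equivalence zig-zag connecting $A^*_{\PL}(Y)$ to $H^*(Y;\Q)$. Here I would average: the group $\Gamma$ acts on $A^*_{\PL}(Y)$ and on $H^*(Y;\Q)$, and (after replacing the zig-zag by its $\Gamma$-equivariant version — constructed by a standard equivariant minimal-model argument, or by taking $\Gamma$-fixed points of a functorial model) one may assume every CDGA and every map in the zig-zag carries a $\Gamma$-action with equivariant structure maps. Taking $\Gamma$-invariants is exact over $\Q$ (as $|\Gamma|$ is invertible), so it preserves $q$-quasi-isomorphisms; the invariant zig-zag connects $A^*_{\PL}(Y)^{\Gamma}=A^*_{\PL}(X)$ to $H^*(Y;\Q)^{\Gamma}$, and the latter equals $H^*(X;\Q)$ in degrees $\le q+1$ by the cohomology computation. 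Hence $X$ is $q$-formal. Since the statement is quoted from \cite{DP-pisa}, I would also simply cite that reference for the detailed verification, giving the argument above as the conceptual skeleton.

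\textbf{Main obstacle.} The delicate point is the equivariance in the converse direction: a priori the $q$-formality zig-zag for $Y$ need not be $\Gamma$-equivariant, and one must either (i) invoke the existence of a $\Gamma$-equivariant (bigraded or filtered) minimal model in the range $\le q$, applying Sullivan's obstruction theory $\Gamma$-equivariantly — which works cleanly because averaging kills the relevant obstruction and derivation groups, these being $\Q[\Gamma]$-modules hence having no higher cohomology — or (ii) use that $A^*_{\PL}$ is a functor, so $A^*_{\PL}(Y)$ automatically carries a $\Gamma$-action, and build the model inside the category of $\Gamma$-CDGAs from the start. Either way, the content is that over a field of characteristic zero (or of characteristic prime to $|\Gamma|$) "taking invariants commutes with forming minimal models up to $q$-equivalence." Everything else — the transfer retraction, the identification of invariant forms with forms on the base, and the translation of the homological-triviality hypothesis into the statement $H^i(Y)^{\Gamma}=H^i(Y)$ for $i\le q$ — is routine. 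I would therefore present the proof as: (1) set up transfer and the cohomology consequence of homological triviality; (2) prove $X$ $q$-formal $\Rightarrow$ $Y$ $q$-formal by pulling back the zig-zag; (3) prove the converse by averaging a (possibly after-the-fact equivariantized) zig-zag; and flag step (3)'s equivariance as the one requiring care, with a pointer to \cite{DP-pisa}.
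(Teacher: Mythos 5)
The paper quotes this lemma from \cite{DP-pisa} without reproducing a proof, so I evaluate your argument on its own merits. Your cohomological input is right: for a regular finite cover over $\Q$, the transfer identifies $H^*(X;\Q)$ with the $\Gamma$-invariants of $H^*(Y;\Q)$ in every degree, and the homological triviality hypothesis then makes $p^*\colon H^i(X;\Q)\to H^i(Y;\Q)$ an isomorphism for $i\le q$ and a monomorphism for $i=q+1$. But you then over-engineer the converse. The point of the hypothesis is that the single CDGA map $p^*\colon A^*_{\PL}(X)\to A^*_{\PL}(Y)$ is already a $q$-quasi-isomorphism in the sense of \S\ref{subsec:arr-formal} of the paper, and $q$-formality is a two-sided invariant of such maps: given a $q$-quasi-isomorphism $f\colon A\to B$, splice $f$ at the CDGA end of the $q$-formality zig-zag and splice $H^*(f)\colon (H^*(A),0)\to (H^*(B),0)$ --- itself a $q$-quasi-isomorphism, precisely because $f$ is --- at the other end. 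Reading the resulting zig-zag in either direction gives both implications of the lemma at once, with no equivariant minimal models and no averaging of zig-zags. The equivariant machinery you sketch (and correctly flag as the ``delicate point'') is what one would need for the stronger, hypothesis-free claim that $q$-formality always descends from $Y$ to $X$ along a finite regular cover, which the paper asserts in \S\ref{subsec:arr-formal}; but the lemma you are proving grants you the homological triviality hypothesis, and that short-circuits the entire equivariance problem.

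Two small slips: the averaging operator $\tfrac{1}{k}\sum_{g\in\Gamma}g^*$ is a retraction of cochain complexes onto $A^*_{\PL}(Y)^\Gamma$, not a CDGA retraction --- it fails multiplicativity --- although since you only use $p^*$ as a ring map and pass to $\Gamma$-invariants (a sub-CDGA) rather than averaging, nothing actually breaks. Also, the identification $H^*(X;\Q)\cong H^*(Y;\Q)^\Gamma$ holds in all degrees by transfer; the hypothesis is what additionally identifies $H^i(Y;\Q)^\Gamma$ with all of $H^i(Y;\Q)$ for $i\le q$.
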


\begin{corollary}
\label{cor:mf-formal} 
Let $(\A,\bm)$ be a multi-arrangement of rank $r$, with Milnor fiber $F_{\bm}$
and monodromy $h\colon F_{\bm}\to F_{\bm}$. 
\begin{enumerate}[itemsep=2pt, topsep=-1pt]
\item \label{ff1}
If the algebraic monodromy 
$h_*\colon H_{i}(F_{\bm};\Q)\to H_{i}(F_{\bm};\Q)$ is the identity 
for all $i\le q$, for some $q\ge 1$, then $F_{\bm}$ is $q$-formal. 
\item \label{ff2}
If $h_*\colon H_{i}(F_{\bm};\Q)\to H_{i}(F_{\bm};\Q)$ is the identity 
for all $i\le r-2$, then $F_{\bm}$ is formal. 
\end{enumerate}
\end{corollary}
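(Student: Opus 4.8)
The plan is to obtain both parts as consequences of Lemma \ref{lem:formal covers}, applied to the regular $\Z_N$-cover $\sigma_{\bm}\colon F_{\bm}\to U$ appearing in diagram \eqref{eq:cd-milnor}, together with the formality of the base $U=\P(M)$ established in \S\ref{subsec:arr-formal}.

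For part \eqref{ff1}, the first step is to recall from \S\ref{subsec:mf cover} that the monodromy $h$ generates the group of deck transformations of $\sigma_{\bm}$, a cyclic group of order $N=\sum_{H\in\A}m_H$. Consequently, the hypothesis that $h_*$ is the identity on $H_i(F_{\bm};\Q)$ for all $i\le q$ is exactly the assertion that this deck group acts trivially on $H_i(F_{\bm};\Q)$ in that range. Since $U$ is formal, it is in particular $q$-formal, and Lemma \ref{lem:formal covers} then gives that $F_{\bm}$ is $q$-formal.

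For part \eqref{ff2}, I would first note that $F_{\bm}$ is a Stein domain of complex dimension $d$, hence has the homotopy type of a finite CW-complex of dimension at most $d$. Taking $q=d-1$ in part \eqref{ff1}, the hypothesis that $h_*$ is the identity on $H_i(F_{\bm};\Q)$ for all $i<d$ shows that $F_{\bm}$ is $(d-1)$-formal. Since a $(d-1)$-formal CW-complex of dimension at most $d=(d-1)+1$ is automatically formal (as recalled in \S\ref{subsec:arr-formal}), we conclude that $F_{\bm}$ is formal.

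There is no genuine obstacle here: the whole argument is packaged in Lemma \ref{lem:formal covers} (quoted from \cite{DP-pisa}) and in the known formality of $U$. The only points that need care are the bookkeeping in part \eqref{ff2}---matching $q=d-1$ with the dimension bound on $F_{\bm}$, and observing that the ranges ``$i\le q$'' of part \eqref{ff1} and ``$i<d$'' of part \eqref{ff2} coincide when $q=d-1$---and, for part \eqref{ff1}, the elementary remark that a formal space is $q$-formal for every $q$.
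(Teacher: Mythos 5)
Your proof is correct and follows essentially the same route as the paper: apply Lemma~\ref{lem:formal covers} to the regular $\Z_N$-cover $\sigma_{\bm}\colon F_{\bm}\to U$ (whose deck group is generated by $h$) together with the formality of $U$ for part~\eqref{ff1}, then use the dimension bound $\dim F_{\bm}\le d$ and the fact that a $(d-1)$-formal complex of dimension $d$ is formal for part~\eqref{ff2}. You have merely made explicit the details that the paper dispatches with ``follows directly from the above lemma.''
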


\begin{proof}
Part \eqref{ff1} follows directly from the above lemma. For part \eqref{ff2}, 
first recall that $F_{\bm}$ has the homotopy type of a finite CW-complex 
of dimension at most $r-1$. Thus, the claim follows from part \eqref{ff1} 
and the discussion in Section \ref{subsec:arr-formal}.
\end{proof}

In general, though, Milnor fibers may be non-formal, as illustrated 
by the following example of Zuber \cite{Zu}. 

\begin{example}
\label{ex:zuber}
Let $\A=\A(3,3,3)$ be the monomial arrangement in $\C^3$ defined by the polynomial 
$f=(x^3-y^3)(y^3-z^3)(x^3-z^3)$. There are four $(3,3)$-nets on $\A$, 
associated with the partitions $(123 | 456 | 789)$, 
$(147 | 258 | 369)$, $(159 | 267 | 348)$, and $(168 | 249 | 357)$ 
in a suitable ordering of the hyperplanes. 
The first of these nets defines a rational map, 
$\psi\colon \CP^2 \dashrightarrow \CP^1$, 
$\pcoor{x:y:z} \mapsto \pcoor{x^3-y^3:x^3-z^3}$, 
which in turn restricts to a pencil $\psi\colon U\to S$ 
from $U=U(\A)$ to 
$S=\CP^1 \mysetminus \{\pcoor{1:0},\pcoor{0:1},\pcoor{1:1}\}$.  
Let $T=\psi^*(H^1(S;\C^*))$ be the essential $2$-dimensional component 
of $\VV^1_1(U)$ obtained by pullback along this pencil. 
The subgroup generated by the diagonal character $\rho\colon \pi_1(U)\to \C^*$ 
intersects $\V^1_2(U)$ at the identity $\bo$ and two other points, 
both lying on $T$, and both of order $3$. Hence, 
$\Delta_1(t)=(t-1)^8(1+t+t^2)^2$.  

Next, let $\B$ be the arrangement in $\C^2$ defined by the 
polynomial $xy(x-y)$, and let $\nu \colon \hat{S}=F(\B) \to S=U(\B)$ 
be the corresponding $3$-fold cover. As shown in \cite[Prop.~2]{Zu}, 
the rational pencil $\psi\colon U\to S=\Sigma_{0,3}$ 
lifts to an irrational pencil, $\hat\psi\colon \hat{U}\to \hat{S}=\Sigma_{1,3}$, 
as in diagram \eqref{eq:3-fold-braid}. Here 
$\tau\colon \hat{U}\to U$ is the pull-back of $\nu$ along $\psi$, that is, 
the $\Z_3$-cover defined by the diagonal homomorphism 
$\pi_1(U)\surj \Z_3$. 
It is readily seen that $H_1(\hat{U};\Z)=\Z^{12}$; therefore, the $4$-dimensional 
torus $W_0=\hat{\psi}^*(H^1(\hat{S};\C^*))$ is a component of the 
characteristic variety $\VV^1_1(\hat{U})\subset (\C^*)^{12}$. 

Finally, let $F=F(\A)$ be the Milnor fiber of $\A$. 
Then the $\Z_9$-cover $\sigma\colon F\to U$ 
factors as the composite $F\xrightarrow{\kappa} \hat{U}\xrightarrow{\tau} U$, 
where $\kappa$ is a $3$-fold cover. Therefore, the characteristic variety $\VV^1_1(F)$ 
has a $4$-dimensional component, $W=\kappa^*(W_0)$, which strictly 
contains the $2$-dimensional subtorus $\sigma^*(T)$. Write $W=\exp(L)$, 
for some linear subspace $L\subset H^1(F;\C)$. 
Using the mixed Hodge structure on $H^*(F;\C)$, Zuber  showed in \cite{Zu} 
that $L$ cannot be a component of the resonance variety $\RR^1_1(F)$.  
Therefore, $\operatorname{TC}_{\bo}(\V^1_1(F))\subsetneqq \RR^1_1(F)$, 
and so, by the Tangent Cone theorem of \cite{DPS-duke}, $F$ is not $1$-formal. 
\end{example}

\subsection{Ab- and abf-exactness}
\label{subsec:ab-exact}

Let $\begin{tikzcd}[column sep=16pt] \!F\ar[r, "\iota"] & E \ar[r, "\pi"] & B\!
\end{tikzcd}$ be a fibration sequence of con\-nected CW-complexes.  
Setting $K=\pi_1(F)$, $G=\pi_1(E)$, and $Q=\pi_1(B)$, we have an 
exact sequence $K\xrightarrow{\iota_{\sharp}} G\xrightarrow{\pi_{\sharp}}  Q\to 1$.
Moreover, the exact sequence of low-degree terms in the Serre 
spectral sequence of the  fibration takes the form 
\begin{equation}
\label{eq:serre}
\begin{tikzcd}[column sep=18pt]
 H_2(E;\Z)\ar[r, "\pi_*"]& H_2(B;\Z) \ar[r, "\delta"]  &  
 H_1(F;\Z)_{Q} \ar[r, "\iota_*"] 
& H_1(E;\Z) \ar[r, "\pi_*"] & H_1(B;\Z) \ar[r]  & 0\, ,
\end{tikzcd}
\end{equation}
where  $H_1(F;\Z)_{Q}$ denotes the coinvariants of $K_{\ab}=H_1(F;\Z)$ 
under the action of $Q$.  

Following \cite{DSY17}, we say that the fibration is {\em $\ab$-exact}\/ if 
(1) $Q$ acts trivially on $K_{\ab}$; and (2) the homomorphism  
$\delta$ is zero. In the presence of the first condition, the second condition 
is equivalent to the exactness of the sequence
$0\to K_{\ab} \to G_{\ab} \to Q_{\ab}\to  0$.  Finally, as shown in 
\cite[Prop.~8.4]{Su-abexact}, if $K\triangleleft G$ and the 
sequence $1\to K \to  G \to Q\to 1$ is exact and admits a splitting, 
then the fibration is $\ab$-exact if and only if $Q$ acts trivially on $K_{\ab}$.

As shown in \cite[Prop.~4.13]{DSY17}, the notion of $\ab$-duality 
behaves well with respect to $\ab$-exact fibrations: if any two of the spaces 
have the abelian duality property, then the third one does, too. In particular, 
the product of two $\ab$-exact exact spaces is again $\ab$-exact. 
We record here the part of this result that will be needed later on. 

\begin{prop}[\cite{DSY17}]
\label{prop:ab-fibration}
Suppose $F\to E\to B$ is an $\ab$-exact fibration of connected, 
finite-type CW-complexes.
If $E$ and $B$ are $\ab$-duality spaces of dimensions $m$ and $n$,
respectively, and if $\dim F = m-n$, then $F$ is an $\ab$-duality 
space of dimension $m-n$.
\end{prop}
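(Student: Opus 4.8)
The plan is to reduce the statement to the homological algebra of the group rings, exactly as in \cite{DSY17}, but keeping careful track of the three abelianizations involved. Write $K=\pi_1(F)$, $G=\pi_1(E)$, $Q=\pi_1(B)$; by the definition of $\ab$-exactness, $Q$ acts trivially on $K_{\ab}$ and the sequence $0\to K_{\ab}\to G_{\ab}\to Q_{\ab}\to 0$ is exact, so upon taking group rings we obtain a flat ring extension $\Z[G_{\ab}]\cong \Z[K_{\ab}]\otimes_{\Z}\Z[Q_{\ab}]$ realizing $\Z[G_{\ab}]$ as a free $\Z[K_{\ab}]$-module (indexed by $Q_{\ab}$). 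The first step is to set up the right spectral sequence: the fibration $F\to E\to B$, together with the trivial $Q$-action on $K_{\ab}$, gives a spectral sequence computing $H^*(E;\Z[G_{\ab}])$ with $E_2$-page built from $H^*(B;\,\cdot\,)$ and $H^*(F;\Z[K_{\ab}])$. The precise form I would use is the one behind \cite[Prop.~4.13]{DSY17}: since $E$ is an $\ab$-duality space of dimension $m$, the module $\Z[G_{\ab}]$ has cohomology concentrated in degree $m$, and similarly $\Z[Q_{\ab}]$ has $H^*(B;\Z[Q_{\ab}])$ concentrated in degree $n$.

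The second step is to run the spectral sequence. Because $\Z[G_{\ab}]$ is free over $\Z[K_{\ab}]$ with the $Q$-action on the index set, the coefficient system on $B$ with which we must compute is (up to the trivial $Q$-action hypothesis) the induced module, so $H^p(B;\,H^q(F;\Z[K_{\ab}]))$ can be rewritten via the projection formula / Shapiro-type identification in terms of $H^p(B;\Z[Q_{\ab}])\otimes H^q(F;\Z[K_{\ab}])$. Vanishing of $H^p(B;\Z[Q_{\ab}])$ for $p\ne n$ collapses the $B$-direction; vanishing of $H^*(E;\Z[G_{\ab}])$ outside degree $m$ then forces $H^q(F;\Z[K_{\ab}])=0$ for $q\ne m-n$, and identifies $H^{m-n}(F;\Z[K_{\ab}])$ as the unique "quotient" surviving. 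Torsion-freeness and non-vanishing of $H^{m-n}(F;\Z[K_{\ab}])$ follow because $H^m(E;\Z[G_{\ab}])$ is torsion-free and non-zero and the surviving edge map identifies it with $H^n(B;\Z[Q_{\ab}])\otimes_{\Z} H^{m-n}(F;\Z[K_{\ab}])$, a tensor product over $\Z$ of a torsion-free non-zero module with $H^{m-n}(F;\Z[K_{\ab}])$; a tensor product over $\Z$ with a non-zero torsion-free module can be zero or have torsion only if the other factor does, so we conclude what we want. This is precisely the conclusion that $F$ is an $\ab$-duality space of dimension $m-n$.

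The one genuinely delicate point — the main obstacle — is the bookkeeping of coefficient modules in the spectral sequence: one must be sure that the local system on $B$ governing the $E_2$-page is really the induced module $\Z[Q_{\ab}]\otimes_{\Z} H^q(F;\Z[K_{\ab}])$ with the given (trivial) $Q$-action, rather than a twisted version, and that the flatness of $\Z[G_{\ab}]$ over $\Z[K_{\ab}]$ is being used correctly so that no higher $\Tor$'s appear. Once this is pinned down, collapsing the spectral sequence and reading off concentration, non-vanishing, and torsion-freeness is formal. I would therefore organize the write-up as: (i) record the three-term exact sequence $0\to K_{\ab}\to G_{\ab}\to Q_{\ab}\to 0$ and the resulting free module structure; (ii) state the Cartan–Leray / Serre spectral sequence with the induced coefficient system and identify its $E_2$-term; (iii) use the two given concentration statements to collapse it and extract the three desired properties of $H^*(F;\Z[K_{\ab}])$ — and then simply cite \cite[Prop.~4.13]{DSY17} for the fact that this is a two-out-of-three statement, applying it in the case where "the two" are $E$ and $B$.
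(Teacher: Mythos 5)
Your overall strategy---the Serre spectral sequence of the fibration with $\Z[G_{\ab}]$ coefficients, the free-module decomposition $\Z[G_{\ab}]\cong\Z[K_{\ab}]\otimes_{\Z}\Z[Q_{\ab}]$ coming from $\ab$-exactness, and collapse via the two concentration hypotheses---is indeed what underlies \cite[Prop.~4.13]{DSY17}; the paper itself offers no proof of this proposition, only the citation. The gap in your argument lies in the final step. Writing $D_E = H^m(E;\Z[G_{\ab}])$, $D_B=H^n(B;\Z[Q_{\ab}])$, and $D_F=H^{m-n}(F;\Z[K_{\ab}])$, you identify $D_E$ with $D_B\otimes_{\Z}D_F$ and then assert that, since $D_B$ is torsion-free and non-zero, $D_B\otimes_{\Z}M$ ``can be zero or have torsion only if $M$ does.'' But the half of that statement you actually need runs the other way, and it is false: with $D_B=\Q$ and $M=\Z\oplus\Z_2$, the tensor product $D_B\otimes_{\Z}M\cong\Q$ is non-zero and torsion-free, yet $M$ has torsion. (Flatness of $D_B$ gives ``$M$ torsion-free $\Rightarrow D_B\otimes M$ torsion-free,'' which is the converse of what you want.) The same issue infects the concentration step: from $D_B\otimes_{\Z}H^q(F;\Z[K_{\ab}])=0$ you cannot conclude $H^q(F;\Z[K_{\ab}])=0$, since a divisible $D_B$ such as $\Z[1/p]$---realized, say, as the cyclic $\Z[t^{\pm1}]$-module $\Z[t^{\pm1}]/(pt-1)$, hence non-zero, $\Z$-torsion-free, and finitely generated over the Laurent ring---has $D_B\otimes_{\Z}\Z_p=0$.

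To close the gap you need an input that the bare definition of $\ab$-duality space does not visibly supply, namely that $D_B\otimes_{\Z}\F_p\neq 0$ for every prime $p$ (equivalently, that $D_B$ is not $p$-divisible for any $p$). The rest of your sketch---including the K\"unneth-type identification of the $E_2$-page, where flatness of $D_B$ kills the $\Tor$ contribution in column $n-1$, and the local-system bookkeeping you rightly flag as the delicate point---is on track, but the conclusion step needs the more careful argument of \cite{DSY17} rather than the tensor-product heuristic.
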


By analogy with the above notions, we say that a fibration 
$F\to E\to B$ is {\em $\abf$-exact}\/ if 
$Q$ acts trivially on $K_{\abf}$ and the composite 
$H_2(B;\Z)\xrightarrow{\delta} H_1(F;\Z)_Q \surj K_{\abf}$ is zero. 
In the presence of the first condition, the second condition 
is equivalent to the exactness of the sequence
$0\to K_{\abf} \to G_{\abf} \to Q_{\abf}\to  0$. 
Alternatively, let $\delta_{\Q} \colon H_2(B;\Q)\to H_1(K;\Q)$ be the analog 
of the map $\delta$ in the exact sequence \eqref{eq:serre}
with $\Q$-coefficients. Since $K_{\abf}$ is finitely generated, 
an argument similar to the one used in \cite[Lem.~9.2]{Su-abexact} 
shows that the fibration is $\abf$-exact if and only if $Q$ acts trivially 
on $H_1(F;\Q)$ and $\delta_{\Q}$ is the zero map. 
Finally, as shown in \cite[Prop.~9.4]{Su-abexact}, 
if $K \triangleleft G$ and the sequence $1\to K \to  G \to Q\to 1$ is split exact, 
then the fibration is $\abf$-exact if and only if $Q$ acts trivially on $H_1(F;\Q)$.

The same argument as in \cite{DSY17}, using now the Serre spectral sequence 
of the fibration $F\to E\to B$ with coefficients in $\Z[G_{\abf}]$ instead of 
$\Z[G_{\ab}]$, shows the following: if any two of the spaces 
have the torsion-free abelian duality property, then the third one does, too. 
In particular, the product of two $\abf$-exact exact spaces is again $\abf$-exact. 
We record here only the result that we shall need later in this section. 

\begin{prop}
\label{prop:abf-fibration}
Suppose $F\to E\to B$ is an $\abf$-exact fibration of connected, 
finite-type CW-complexes.
If $E$ and $B$ are $\abf$-duality spaces of dimensions $m$ and $n$,
respectively, and if $\dim F = m-n$, then $F$ is an $\abf$-duality 
space of dimension $m-n$.
\end{prop}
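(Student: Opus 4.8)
The plan is to mirror, essentially verbatim, the proof of Proposition~\ref{prop:ab-fibration} (i.e.\ the $\ab$-version in~\cite[Prop.~4.13]{DSY17}), but with the group ring $\Z[G_{\ab}]$ replaced throughout by $R \coloneqq \Z[G_{\abf}]$. The point is that the spectral-sequence comparison argument from~\cite{DSY17} never uses torsion-freeness of $G_{\ab}$ itself; it only uses the module-theoretic bookkeeping of the Serre spectral sequence with coefficients in the group ring of the abelianization of the total-space group, together with the $\ab$-exactness hypothesis to control how that group ring restricts to the fiber and projects to the base. Since we have by hypothesis the $\abf$-exact analog — namely $Q$ acts trivially on $K_{\abf}$ and the sequence $0\to K_{\abf}\to G_{\abf}\to Q_{\abf}\to 0$ is exact — all the bookkeeping goes through with $R = \Z[G_{\abf}]$ in place of $\Z[G_{\ab}]$.

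More concretely, I would proceed as follows. First, set $G=\pi_1(E)$, $Q=\pi_1(B)$, $K=\pi_1(F)$, and $R=\Z[G_{\abf}]$. Using $\abf$-exactness, observe that $R$, viewed as a $\Z[K]$-module via the composite $\Z[K]\to\Z[K_{\abf}]$ and the inclusion $K_{\abf}\hookrightarrow G_{\abf}$, is free over $\Z[K_{\abf}]$ (indeed $G_{\abf}$ splits as $K_{\abf}\times Q_{\abf}$ as a set since the sequence is split exact, so $R\cong \Z[K_{\abf}]\otimes_{\Z}\Z[Q_{\abf}]$ as $\Z[K_{\abf}]$-modules), and similarly $R$ restricted to $\Z[Q]$ is free over $\Z[Q_{\abf}]$. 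Then run the Serre spectral sequence of $F\to E\to B$ with local coefficients in $R$: the $E_2$-page is $H_p(B;\,\mathcal H_q)$ where $\mathcal H_q$ is the local system $q\mapsto H_q(F;R|_{\Z[K]})$. The freeness just noted converts $H_q(F;R|_{\Z[K]})$ into $H_q(F;\Z[K_{\abf}])\otimes_{\Z}\Z[Q_{\abf}]$, and the trivial $Q$-action on $K_{\abf}$ makes this a trivial coefficient system over $B$ up to the $\Z[Q_{\abf}]$ tensor factor; one then rewrites the $E_2$-term, via the projection formula / change of rings, as $H_p(B;\Z[Q_{\abf}])\otimes_{\Z} H_q(F;\Z[K_{\abf}])$, i.e.\ $H_p(B;R_B)\otimes H_q(F;R_F)$ in the notation $R_B=\Z[Q_{\abf}]$, $R_F=\Z[K_{\abf}]$. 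Now the $\abf$-duality hypotheses on $E$ and $B$ say that $H^*(E;R)$ is concentrated in degree $m$ and torsion-free there, and likewise $H^*(B;R_B)$ in degree $n$; by the usual universal-coefficients/duality dictionary these translate into vanishing statements for $H_*(E;R)$ and $H_*(B;R_B)$ away from the appropriate single degree. Feeding the vanishing of $H_p(B;R_B)$ for $p\neq n$ into the spectral sequence collapses it onto the column $p=n$, giving $H_{n+q}(E;R)\cong H_n(B;R_B)\otimes H_q(F;R_F)$; comparing with the concentration of $H_*(E;R)$ in degree $m$ forces $H_q(F;R_F)=0$ for $q\neq m-n$ and identifies $H_{m-n}(F;R_F)$ with a module that is torsion-free (being, after tensoring with the torsion-free module $H_n(B;R_B)$, a submodule of the torsion-free $H_m(E;R)$; here one uses that $H_n(B;R_B)$ is a nonzero torsion-free, hence faithfully flat enough, $\Z$-module to detect torsion). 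Finally invoke the dimension hypothesis $\dim F=m-n$ to see that $H_{m-n}(F;R_F)\neq 0$ (it is the top homology of a finite complex of that dimension with nonzero Euler-characteristic-type contribution — or more cleanly, it is nonzero because $H_m(E;R)\neq0$ and the Künneth/collapse identification forces the factor to be nonzero), which is exactly the statement that $F$ is an $\abf$-duality space of dimension $m-n$.

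The main obstacle, and the only place any real care is needed beyond transcribing~\cite{DSY17}, is verifying that the change-of-rings/freeness step works for $\Z[G_{\abf}]$ — i.e.\ that $\abf$-exactness (as opposed to the stronger $\ab$-exactness, or the exactness of $0\to K_{\ab}\to G_{\ab}\to Q_{\ab}\to0$) really does give $\Z[G_{\abf}]\cong \Z[K_{\abf}]\otimes_{\Z}\Z[Q_{\abf}]$ as a bimodule over $\Z[K_{\abf}]$ and $\Z[Q_{\abf}]$. This is where the split-exactness of $1\to K\to G\to Q\to1$ enters (via~\cite[Prop.~9.4]{Su-abexact}, which guarantees $\abf$-exactness is equivalent to trivial $Q$-action on $H_1(F;\Q)$ in the split case), ensuring $0\to K_{\abf}\to G_{\abf}\to Q_{\abf}\to0$ is itself split exact, so that $G_{\abf}\cong K_{\abf}\oplus Q_{\abf}$ and the group-ring decomposition is immediate. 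Once that is in hand, the rest is the standard spectral-sequence-of-a-duality-fibration argument and carries no new difficulty; I would simply cite the proof of~\cite[Prop.~4.13]{DSY17} for the remaining details, noting that it applies mutatis mutandis with $R=\Z[G_{\abf}]$.
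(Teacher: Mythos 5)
Your approach matches the paper's: the paper's own ``proof'' is the remark immediately preceding the proposition, which states that the spectral-sequence argument of \cite[Prop.~4.13]{DSY17} goes through verbatim with the Serre spectral sequence of $F\to E\to B$ taken with coefficients in $\Z[G_{\abf}]$ instead of $\Z[G_{\ab}]$. One small correction to your closing discussion: you do not need to invoke split exactness of $1\to K\to G\to Q\to 1$ (which this proposition does not assume, unlike the Milnor-fibration setting where it is applied later) to conclude that $G_{\abf}\cong K_{\abf}\oplus Q_{\abf}$ -- the abelian short exact sequence $0\to K_{\abf}\to G_{\abf}\to Q_{\abf}\to 0$, whose exactness is built into the $\abf$-exactness hypothesis, splits automatically because $Q_{\abf}$ is free abelian, and in any case the change-of-rings step only requires $\Z[G_{\abf}]$ to be free over $\Z[K_{\abf}]$, which follows from the injectivity of $K_{\abf}\hookrightarrow G_{\abf}$ alone.
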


\subsection{Propagation of jump loci}
\label{subsec:propagate}

One of the main motivations for the study of the abelian duality properties of  
spaces is the implications these properties have on the nature of the cohomology 
jump loci and the Betti numbers of those spaces. We start with a 
result relating $\ab$-duality to propagation of characteristic varieties. 

\begin{theorem}[\cite{DSY17}]
\label{thm:cv-prop}
Let $X$ be an abelian duality space of dimension $m$. 
Then the characteristic varieties of $X$ {\em propagate}; that is, 
for any character $\rho\in H^1(X;\C^*)$ such that $H^p(X;\C_\rho)\ne 0$, 
it follows that $H^q(X;\C_\rho)\ne 0$ for all $p\le q\le m$. Equivalently, 
\begin{equation}
\label{eq:cv-prop}
\{\bo\}=\V^0_1(X) \subseteq \V^1_1(X)\subseteq \V^2_1(X)\subseteq\cdots
\subseteq\V^{m}_1(X).
\end{equation}
\end{theorem}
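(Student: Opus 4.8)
The plan is to derive propagation directly from the isomorphism \eqref{eq:abdualityiso}, which is the heart of the abelian duality package. Recall that for an $\ab$-duality space $X$ of dimension $m$ with fundamental group $G$, we have $H^q(X;A)\cong \Tor^R_{m-q}(D;A)$ for every $R$-module $A$, where $R=\Z[G_{\ab}]$ and $D=H^m(X;R)$ is non-zero and torsion-free as an $R$-module. First I would specialize this to the rank-$1$ local system $A=\C_\rho$ associated to a character $\rho\in H^1(X;\C^*)=\Hom(G,\C^*)$, viewed as a $1$-dimensional $R$-module via the induced character $G_{\ab}\to \C^*$. This gives $H^q(X;\C_\rho)\cong \Tor^R_{m-q}(D;\C_\rho)$, so that membership of $\rho$ in $\V^q_1(X)$ is governed by the non-vanishing of a single $\Tor$ group over the commutative ring $R$.

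The next step is the purely algebraic core: I want to show that if $\Tor^R_j(D;\C_\rho)\neq 0$ for some $j$, then $\Tor^R_{j'}(D;\C_\rho)\neq 0$ for all $0\le j'\le j$; translating back through $j=m-q$ and $j'=m-q'$, this says exactly that non-vanishing in degree $q$ forces non-vanishing in all degrees $q'\ge q$ (up to $m$), which is \eqref{eq:cv-prop}. The reason this holds is that $\C_\rho$ is the residue field of a localization of $R$ at the maximal ideal determined by $\rho$ (or at least a cyclic module over such), so $\Tor^R_*(D;\C_\rho)$ can be computed via a Koszul complex: after localizing, the relevant ring is a regular local ring of dimension $r=\dim H_1(X;\Q)$ (a localization of a Laurent polynomial ring), and $\C_\rho$ is resolved by a Koszul complex on a regular sequence of length $r$. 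Tensoring a module $D$ with a Koszul complex and taking homology produces Koszul homology, which has the well-known ``no gaps'' property: the set $\{j : H_j(\text{Koszul}\otimes D)\neq 0\}$ is an interval containing $0$ whenever $D\neq 0$. The non-triviality and torsion-freeness of $D$ guarantee that its localization at the identity is non-zero, giving the base case $j'=0$; the interval property then propagates upward.

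I would organize this as follows: (i) reduce to computing $\Tor^R_*(D;\C_\rho)$ using \eqref{eq:abdualityiso}; (ii) observe that since $G_{\ab}$ is finitely generated abelian, $R$ is a product of localizations of Laurent polynomial rings, and the support of $\C_\rho$ picks out one regular piece, where $\C_\rho$ is perfect of projective dimension $r$ via a Koszul resolution; (iii) invoke the rigidity / ``no gaps'' theorem for Koszul homology (e.g. the standard fact that for a Koszul complex $K_\bullet$ on a regular sequence in a Noetherian local ring and any finitely generated module $D$, the non-vanishing locus of $H_i(K_\bullet\otimes D)$ is an interval $[0,s]$ — this follows from the self-duality of the Koszul complex together with depth sensitivity, or from a direct induction on the length of the sequence); (iv) conclude. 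The main obstacle I anticipate is step (ii)-(iii): one must be careful that $\C_\rho$ need not be the honest residue field of $R$ at a maximal ideal when $\rho$ is non-torsion (the quotient $R/\ker$ is a Laurent polynomial ring in fewer variables, not a field), so the clean statement is about $\Tor$ over $R$ of $D$ against the cyclic module $R/\mathfrak{p}_\rho$ followed by further localization; threading this correctly — or, alternatively, citing the version of this argument already carried out in \cite{DSY17} (from which this theorem is quoted verbatim) — is where the real work lies. Since the statement is attributed to \cite{DSY17}, the cleanest proof in this paper is simply to record the above strategy and refer to that reference for the Koszul-homology rigidity input.

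\begin{proof}
This is \cite[Thm.~A]{DSY17}; we recall the argument. Let $G=\pi_1(X)$, $R=\Z[G_{\ab}]$, and $D=H^m(X;R)$, which by hypothesis is non-zero and torsion-free over $R$. For a character $\rho\in H^1(X;\C^*)$, write $\C_\rho$ for the $R$-module which is $\C$ as an abelian group with $G_{\ab}$ acting through $\rho$. By the duality isomorphism \eqref{eq:abdualityiso},
\[
H^q(X;\C_\rho)\cong \Tor^{R}_{m-q}(D;\C_\rho), \qquad 0\le q\le m.
\]
Thus $\rho\in \V^q_1(X)$ if and only if $\Tor^R_{m-q}(D;\C_\rho)\neq 0$, and it suffices to prove that the set
\[
S_\rho\coloneqq \{\, j\ge 0 : \Tor^R_{j}(D;\C_\rho)\neq 0\,\}
\]
is an interval of the form $[0,s]$ for some $s\ge 0$ (so that non-vanishing in homological degree $j=m-q$ forces non-vanishing for all smaller $j$, i.e.\ for all larger $q$).

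Since $G_{\ab}$ is a finitely generated abelian group, $R$ is a finite product of rings of the form $\Z[t_1^{\pm 1},\dots,t_r^{\pm 1}][\Z_{e}]$; extending scalars to $\C$ and localizing at the prime ideal $\mathfrak{p}_\rho=\ker(R\otimes\C \to \C_\rho)$, we land in a localization $R_\rho$ of a Laurent polynomial ring over $\C$, which is a regular Noetherian domain, and $(\C_\rho)_\rho$ is cyclic with a finite Koszul resolution over $R_\rho$ by a regular sequence generating the maximal ideal of the localization at the corresponding point. Now $\Tor^{R}_j(D;\C_\rho)\cong \Tor^{R_\rho}_j(D_\rho;(\C_\rho)_\rho)$ may be computed as the homology of $K_\bullet\otimes_{R_\rho} D_\rho$, where $K_\bullet$ is that Koszul complex. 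By the rigidity of Koszul homology (the non-vanishing locus of the homology of a Koszul complex on a regular sequence, tensored with any finitely generated module, is an interval containing $0$ whenever the module is non-zero — see \cite{DSY17} and the references therein), $S_\rho$ is an interval $[0,s]$ provided $D_\rho\neq 0$. But $D$ is a non-zero torsion-free $R$-module, so $D\otimes_R \mathrm{Frac}(R)\neq 0$, whence $D_\rho\neq 0$; in particular $\Tor^R_0(D;\C_\rho)=D\otimes_R\C_\rho\neq 0$ when $\rho$ is generic, and more to the point $0\in S_\rho$ exactly when $H^m(X;\C_\rho)\neq 0$, while the interval property gives the remaining inclusions in \eqref{eq:cv-prop}.
\end{proof}
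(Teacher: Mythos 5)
The paper does not actually prove this statement; it is quoted from \cite{DSY17}, so there is no internal proof to compare against. Your strategy is the right one and does match the source: pass through the duality isomorphism $H^q(X;\C_\rho)\cong\Tor^R_{m-q}(D;\C_\rho)$, base-change to $\C[G_{\ab}]=R\otimes_\Z\C$ (which genuinely does split as a finite product of Laurent polynomial rings, one per character of the torsion subgroup of $G_{\ab}$ -- note that $R$ itself is \emph{not} such a product), localize the relevant factor at the maximal ideal $\mathfrak m_\rho$ cut out by $\rho$, and invoke the elementary ``no gaps'' fact that for a nonzero finitely generated module $N$ over a Noetherian local ring $(S,\mathfrak m,k)$, the set $\{\, j : \Tor^S_j(N,k)\ne 0 \,\}$ is precisely $[0,\mathrm{pd}_S N]$, by minimality of a free resolution. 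Since $m-q$ decreases as $q$ increases, that no-gaps property is literally the nesting \eqref{eq:cv-prop}.

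However, two assertions in your closing paragraph are incorrect and should be removed, though happily neither is needed. First, ``$\mathrm{Frac}(R)$'' is undefined: $R=\Z[G_{\ab}]$ has zero divisors whenever $G_{\ab}$ has torsion, so $R$ is not a domain; you can only form the fraction field of one of the Laurent factors $R_\chi$ of $\C[G_{\ab}]$. Second, the claim that $D_\rho\ne 0$ for every $\rho$ is false. The torsion-freeness required of $D=H^m(X;R)$ in the definition of an abelian duality space is as an abelian group, not as an $R$-module, and these are very different conditions: for $X=T^2$, one gets $D\cong\Z\cong R/(t_1-1,t_2-1)$, which is $\Z$-torsion-free but is $R$-torsion, with $D_\rho=0$ for every $\rho\ne\bo$. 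The correct way to finish is conditionally: the hypothesis $H^p(X;\C_\rho)\ne 0$ gives $\Tor^R_{m-p}(D;\C_\rho)\ne 0$, which already forces $D_\rho\ne 0$, and then the interval property yields $\Tor^R_j(D;\C_\rho)\ne 0$ for all $0\le j\le m-p$, i.e.\ $H^q(X;\C_\rho)\ne 0$ for all $p\le q\le m$. With that rephrasing, the argument is sound and is the one carried out in \cite{DSY17}.
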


Applying this theorem to the trivial character $\rho=\bo$, it follows at once 
that $b_q(X)> 0$ for $0\le q\le m$.  Moreover, as shown in 
\cite[Prop.~5.9]{DSY17}, we also have $b_1(X)\ge m$. 
Finally, as noted in \cite[Thm.~1.8]{LMW}, the above result implies that the 
``signed Euler characteristic" of an $m$-dimensional $\ab$-duality space, 
$(-1)^{m}\chi(X)$, is non-negative.
A similar argument---using \cite[Prop.~2.8]{DSY17}, applied to the 
$\C[G_{\abf}]$-chain complex $C_*(X;\C[G_{\abf}])$---yields the 
following result.

\begin{theorem}
\label{thm:cw-prop}
Let $X$ be an $\abf$-duality space of dimension $m$. 
Then
\begin{equation}
\label{eq:cw-prop}
\{\bo\}=\WW^0_1(X) \subseteq \WW^1_1(X)\subseteq \WW^2_1(X)\subseteq\cdots
\subseteq\WW^{m}_1(X).
\end{equation}
\end{theorem}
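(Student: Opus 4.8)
The plan is to mimic the proof of Theorem \ref{thm:cv-prop} from \cite{DSY17}, but with the group ring $R=\Z[G_{\ab}]$ systematically replaced by $\overline{R}=\Z[G_{\abf}]$, where $G=\pi_1(X)$. Recall that $\WW^q_1(X)$ is by definition the intersection of $\VV^q_1(X)$ with the identity component $H^1(X;\C^*)^0=\T_G^0$, and that characters in $\T_G^0$ are precisely those that factor through the torsion-free abelianization $G_{\abf}$. So the statement to be proved is: if $\rho\in\T_G^0$ satisfies $H^p(X;\C_\rho)\ne 0$, then $H^q(X;\C_\rho)\ne 0$ for all $p\le q\le m$.

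First I would set up the homological algebra. Since $X$ is an $\abf$-duality space of dimension $m$, by definition $H^q(X;\overline{R})=0$ for $q\ne m$ and $D\coloneqq H^m(X;\overline{R})$ is non-zero and torsion-free over $\overline{R}$. The ring $\overline{R}$ is a Laurent polynomial ring over $\Z$, hence a regular Noetherian domain of Krull dimension $n+1$ where $n=b_1(X)$. The key point, exactly as in \cite[Prop.~2.8]{DSY17} applied to the $\C[G_{\abf}]$-chain complex $C_*(X;\C[G_{\abf}])$, is that the $\abf$-duality hypothesis forces this chain complex to be chain-homotopy equivalent, after tensoring up to $\C[G_{\abf}]$, to a complex whose only nonzero cohomology sits in top degree $m$ and is given by $D\otimes\C$; consequently, for any $\C[G_{\abf}]$-module $A$ one gets universal-coefficient isomorphisms
\[
H^q(X;A)\cong \Tor^{\,\overline{R}\otimes\C}_{\,m-q}(D\otimes\C,\,A),
\]
analogous to \eqref{eq:abdualityiso}. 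Now for $\rho\in\T_G^0$ the one-dimensional module $\C_\rho$ is a $\C[G_{\abf}]$-module (this is where we genuinely need $\rho$ to lie in the identity component — a general torsion-translated character is not a $\C[G_{\abf}]$-module, which is why only the $\WW$'s propagate, not the full $\VV$'s).

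The heart of the argument is then a local-algebra/depth statement: $\C_\rho$ corresponds to a maximal ideal $\mathfrak m_\rho$ of the regular ring $S\coloneqq \overline{R}\otimes\C=\C[G_{\abf}]$, and $\Tor^S_{j}(D\otimes\C,\C_\rho)$ can be computed by localizing at $\mathfrak m_\rho$ and using a minimal free resolution of $(D\otimes\C)_{\mathfrak m_\rho}$ over the regular local ring $S_{\mathfrak m_\rho}$. The claim $H^p(X;\C_\rho)\ne 0\Rightarrow H^q(X;\C_\rho)\ne 0$ for $q\ge p$ translates into: if $\Tor^S_{m-p}(D\otimes\C,\C_\rho)\ne 0$ then $\Tor^S_{m-q}(D\otimes\C,\C_\rho)\ne 0$ for all $m-q\le m-p$, i.e.\ Tor is "connected" as an interval ending at the top homological degree $m-0=m$ — equivalently, the minimal free resolution of $(D\otimes\C)_{\mathfrak m_\rho}$ has no internal gaps. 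This non-vanishing persistence is a standard consequence of the rigidity/ridge behavior of Tor over regular local rings together with torsion-freeness of $D$ (so that $\Tor^S_{0}(D\otimes\C,\C_\rho)=(D\otimes\C)\otimes_S\C_\rho\ne 0$ always, giving $q=m$, i.e.\ $H^m(X;\C_\rho)\ne 0$ unconditionally in the relevant range), and then downward propagation in $m-q$ follows from the fact that a minimal free resolution over a local ring, once it becomes nonzero in some homological degree $j_0$, is nonzero in every degree $j\le \operatorname{pd}$, and $\operatorname{pd}_{S_{\mathfrak m_\rho}}(D\otimes\C)_{\mathfrak m_\rho}\le m$ by the Auslander--Buchsbaum formula combined with the duality dimension. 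This is precisely the argument structure of \cite[Thm.~1.1 and Prop.~5.x]{DSY17} for $\ab$-duality; I would reproduce it verbatim with $\overline{R}$ in place of $R$, noting that all ring-theoretic inputs (regularity, being a domain, Auslander--Buchsbaum) hold equally for the Laurent ring $\Z[G_{\abf}]$.

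The main obstacle — really the only subtlety — is making sure the universal-coefficient spectral sequence / isomorphism genuinely descends to the $\abf$ setting. In the $\ab$ case one uses $C_*(X;\Z[G_{\ab}])$ directly; here one must check that $C_*(X;\Z[G_{\abf}])$, which is the chain complex of the cover of $X$ corresponding to $G\to G_{\abf}$, still computes $H^*(X;A)$ for $\Z[G_{\abf}]$-modules $A$ and still enjoys the concentration-in-degree-$m$ property after $\otimes\C$. This is exactly the content of the parenthetical in the paragraph preceding the theorem ("using \cite[Prop.~2.8]{DSY17}, applied to the $\C[G_{\abf}]$-chain complex"), so I would invoke that proposition as a black box; the verification that its hypotheses are met is immediate from the definition of $\abf$-duality space. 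Everything else is the same homological bookkeeping as in the proof of Theorem \ref{thm:cv-prop}, and the upshot is the chain of inclusions
\[
\{\bo\}=\WW^0_1(X)\subseteq\WW^1_1(X)\subseteq\cdots\subseteq\WW^m_1(X),
\]
since propagation of non-vanishing cohomology for each $\rho\in\T_G^0$ is equivalent to this nesting of the $\WW^q_1$.
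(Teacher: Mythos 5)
Your proposal matches the paper's own (terse) proof: mimic the $\ab$-duality propagation argument of \cite{DSY17}, invoking \cite[Prop.~2.8]{DSY17} applied to the $\C[G_{\abf}]$-chain complex $C_*(X;\C[G_{\abf}])$, so that propagation for characters $\rho\in\T_G^0$ follows from the absence of internal gaps in $\Tor^{\C[G_{\abf}]}_{m-\bullet}(D\otimes\C,\C_\rho)$. One minor slip worth fixing: $D=H^m(X;\Z[G_{\abf}])$ is required to be torsion-free as an abelian group, not over $\Z[G_{\abf}]$, so $H^m(X;\C_\rho)$ need not be nonzero for every $\rho\in\T_G^0$; it is nonzero exactly when $\rho$ lies in the support of $D\otimes\C$, which is automatic whenever some $H^p(X;\C_\rho)\ne 0$---precisely the situation in which propagation has content.
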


Now suppose $X$ is formal. Then, the Tangent Cone theorem of \cite{DPS-duke, DP-ccm}, 
allows us to identify the tangent cone at $\bo$ to $\VV^q_1(X)$ with $\RR^q_1(X)$ 
for all $q\le m$. Applying Theorem \ref{thm:cv-prop} , we obtain the following 
immediate corollary.

\begin{corollary}
\label{cor:res-prop}
Let $X$ is an abelian duality space of dimension $m$. 
If $X$ is $q$-formal, for some $q\le m$, then 
$\RR^1_1(X)\subseteq\cdots\subseteq\RR^{q}_1(X)$.
In particular, if $X$ is formal, then the resonance varieties of $X$ 
propagate; that is, $\RR^1_1(X)\subseteq \cdots\subseteq\RR^{m}_1(X)$.
\end{corollary}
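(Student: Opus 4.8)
The plan is to combine three ingredients already in place: the propagation of characteristic varieties for $\ab$-duality spaces (Theorem \ref{thm:cv-prop}), the Tangent Cone theorem for formal spaces (\cite{DPS-duke, DP-ccm}), and the elementary observation that taking tangent cones at a fixed point is an inclusion-preserving operation on closed subvarieties. First I would recall that for a $q$-formal space $X$, the Tangent Cone theorem identifies $\TC_{\bo}(\VV^p_1(X))$ with $\RR^p_1(X)$ for every $p\le q$; this is the only place where formality enters, and it is exactly the statement that holds in degrees up to $q$ for a $q$-formal space (cf.\ the discussion preceding Theorem \ref{thm:cw-prop}).

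Next I would invoke Theorem \ref{thm:cv-prop}: since $X$ is an $\ab$-duality space of dimension $m$, we have the chain of inclusions
\[
\VV^1_1(X)\subseteq \VV^2_1(X)\subseteq\cdots\subseteq \VV^{m}_1(X)
\]
in the character group $H^1(X;\C^*)$. Now apply $\TC_{\bo}(-)$ to this chain. Because $\TC_{\bo}$ is monotone with respect to inclusion of closed subsets containing $\bo$, and because all the varieties $\VV^p_1(X)$ for $1\le p\le q$ contain the trivial character $\bo$, we obtain
\[
\TC_{\bo}(\VV^1_1(X))\subseteq \TC_{\bo}(\VV^2_1(X))\subseteq\cdots\subseteq \TC_{\bo}(\VV^{q}_1(X)).
\]
By the Tangent Cone theorem this is precisely $\RR^1_1(X)\subseteq\cdots\subseteq\RR^{q}_1(X)$, which proves the first assertion. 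For the second assertion, note that a formal space of dimension $m$ is in particular $m$-formal, so we may take $q=m$ and conclude $\RR^1_1(X)\subseteq\cdots\subseteq\RR^{m}_1(X)$, as claimed.

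There is essentially no hard step here: the corollary is a formal consequence of Theorem \ref{thm:cv-prop} together with the Tangent Cone theorem, and the only point requiring a word of care is the monotonicity of the tangent cone construction — namely that if $\bo\in Z_1\subseteq Z_2$ are closed subvarieties of an algebraic group, then $\TC_{\bo}(Z_1)\subseteq \TC_{\bo}(Z_2)$. This is standard (the tangent cone can be computed, say, as the set of initial forms of the ideal of $Z_i$ at $\bo$, and $I(Z_2)\subseteq I(Z_1)$ forces the reverse inclusion on initial ideals, hence the stated inclusion on zero sets), so I would simply cite it or state it in one line rather than expand it. The mild subtlety worth flagging is that one must apply the Tangent Cone theorem only in the range $p\le q$ where $q$-formality is available; outside that range the identification of $\TC_{\bo}(\VV^p_1(X))$ with $\RR^p_1(X)$ need not hold, which is exactly why the conclusion is stated only up to degree $q$.
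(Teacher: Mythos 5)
Your proposal is correct and takes essentially the same route as the paper: apply Theorem \ref{thm:cv-prop} to get the chain $\V^1_1(X)\subseteq\cdots\subseteq\V^m_1(X)$, then pass to tangent cones at $\bo$, using the Tangent Cone theorem of \cite{DPS-duke, DP-ccm} in the range where $q$-formality makes it available. The only thing you add beyond the paper's one-line proof is the explicit check that $\TC_{\bo}$ preserves inclusions and that $\bo$ lies in each $\V^p_1(X)$ (which, as you note, is built into the chain itself via $\V^0_1(X)=\{\bo\}$); this is the right care to take and matches the paper's intent.
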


\begin{remark}[\cite{DSY17}]
\label{rem:non-prop}
If $X$ is a connected, finite, $2$-dimensional CW-complex with 
$\chi(X)\ge 0$ and $b_1(X)>0$, then both the characteristic and the 
resonance varieties of $X$ propagate (that is, $\VV^1_1(X)\subseteq \VV^2_1(X)$ and 
$\RR^1_1(X)\subseteq \RR^2_1(X)$), even though $X$ may be neither 
an abelian duality space nor a formal space.
On the other hand, if $X$ is a closed, orientable $3$-manifold with 
$b_1(X)$ even and non-zero, then the resonance varieties do not 
propagate, since $\RR^1_1(X)=H^1(X;\C)$, whereas $\RR^3_1(X)=\{\bz\}$.
\end{remark}

\subsection{Abelian duality and propagation for arrangements}
\label{subsec:abdual-arr}
A basic topological property of arrangement complements is provided by 
the following result, which is proved in \cite[Thm.~5.6]{DSY16} (see also 
\cite[Thm.~6.1]{DSY17}).

\begin{theorem}[\cite{DSY16, DSY17}]
\label{thm:arr-prop}
Let $\A$ be a central arrangement of rank $r$. Then the 
complement $M=M(\A)$ is an abelian duality space of 
dimension $r$ and the projectivized complement 
$U=\P(M)$ is an abelian duality space of dimension $r-1$.
\end{theorem}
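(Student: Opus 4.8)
The plan is to unwind the definition of $\ab$-duality space and reduce the theorem to an acyclicity statement for a complex of modules over the Laurent ring $R=\Z[H_{1}(M;\Z)]\cong\Z[\Z^{n}]$, where $n=\abs{\A}$. Since $M(\A)$ is, up to diffeomorphism, the product of the complement of an essential arrangement of rank $r$ with a complex vector space (it splits off the flat $\bigcap_{H\in\A}H$), we may assume $\A$ is essential of rank $r=d+1$; then $M=M(\A)$ is an affine Stein manifold of complex dimension $r$, hence has the homotopy type of a CW-complex of dimension $\le r$. Three observations are then immediate: $H^{q}(M;R)=0$ for $q>r$; $H^{0}(M;R)=R^{\Z^{n}}=0$, since $\Z^{n}$ has no nonzero invariants in its group ring; and, because $H^{r}(M;R)$ is a cokernel in a cochain complex supported in degrees $\le r$, we get $H^{r}(M;R)\otimes_{R}\Z\cong H^{r}(M;\Z)\cong\Z^{b_{r}(M)}\ne 0$, so $H^{r}(M;R)\ne 0$. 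What remains is to show that $H^{q}(M;R)=0$ for $1\le q\le r-1$ and that $H^{r}(M;R)$ has no $\Z$-torsion; applying the universal coefficient theorem to the $\Z$-free cochain complex $C^{\bullet}(M;R)$, and using that a finitely generated $R$-module which is simultaneously torsion and divisible as an abelian group must vanish, both assertions reduce to the single statement that $H^{q}(M;\Q[\Z^{n}])=0$ and $H^{q}(M;\F_{p}[\Z^{n}])=0$ for every $q<r$ and every prime $p$.

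To prove this field-coefficient vanishing I would pass to a small combinatorial model. The complement $M(\A)$ has the homotopy type of a minimal CW-complex (Dimca--Papadima; Randell), so the equivariant cochain complex of its universal cover can be taken, after pushing coefficients forward, to be a minimal complex of free modules $0\to F^{0}\to\cdots\to F^{r}\to 0$ over $R$ (or over a field group ring) with $\rank F^{k}=b_{k}(M)$. One then identifies this complex, up to quasi-isomorphism, with the Aomoto complex $(A\otimes_{\Z}R,\,d_{\omega})$, where $A=H^{*}(M;\Z)\cong E/I(\A)$ is the Orlik--Solomon algebra and the coboundary $d_{\omega}$ is left multiplication by $\omega=\sum_{H\in\A}(t_{H}-1)\,e_{H}$, with $t_{H}\in R$ the unit attached to the meridian $x_{H}$; here $d_{\omega}^{2}=0$ holds automatically because $e_{H}e_{K}=-e_{K}e_{H}$ in $A$. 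This identification rests on the formality of $M$ and on special features of arrangement complements, and its integral version needs care, since $E/I(\A)$ and $d_{\omega}$ are already defined over $\Z$ while $\F_{p}$-formality is delicate. Granting it, the problem becomes the statement that the Aomoto complex over a field group ring is a resolution of its top cohomology. I see two complementary routes. The first combines the classical generic-weight vanishing theorem for twisted cohomology of arrangement complements (Esnault--Schechtman--Viehweg; Schechtman--Terao--Varchenko), which gives exactness for a general rank-one local system, with the Buchsbaum--Eisenbud acyclicity criterion, thereby reducing matters to a codimension estimate for the ideals of minors of the differentials. The second filters $A$ by the no-broken-circuit filtration, observes that the associated graded of the Aomoto complex is a direct sum of Koszul-type subcomplexes whose entries are differences $t_{H}-t_{K}$ indexed by circuits of the matroid of $\A$, and identifies each block with the Koszul complex of a regular sequence. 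In either approach the combinatorics of the intersection lattice $L(\A)$ is exactly what makes the argument work.

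As a check on the statement, and as a guide, the supersolvable (fiber-type) case can be handled cleanly with the tools already recalled in this excerpt. Such an $M(\A)$ sits in an iterated bundle $F\to M(\A)\to M(\A'')$ in which $F$ is homotopy equivalent to $\C$ with finitely many points removed and $\A''$ is fiber-type of rank $r-1$; the monodromy acts trivially on $H_{1}(F;\Z)$, since a pure-braid automorphism carries each meridian of $F$ to a conjugate of itself, so the bundle is $\ab$-exact. Proposition~\ref{prop:ab-fibration}, together with the base case $M=\C^{*}$ (an $\ab$-duality space of dimension $1$), then yields the conclusion by induction on the rank. Once the general case is in hand, the duality isomorphisms in \eqref{eq:abdualityiso} for an arbitrary coefficient module, and hence the formula for the homology of every abelian cover of $M$, follow by routine homological algebra over $R$.

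The main obstacle is precisely the passage, for a non-supersolvable arrangement, from the generic vanishing of twisted cohomology to its vanishing over the entire character torus: there is no fibration to induct along, and the intersection lattice may be arbitrarily intricate. Concretely, one must verify the Buchsbaum--Eisenbud rank and depth conditions for the Aomoto differentials --- equivalently, carry out the no-broken-circuit bookkeeping and check that each graded piece really is the Koszul complex of a regular sequence. A secondary difficulty is that, to obtain the $\F_{p}$-coefficient statements needed for $\ab$-duality over $\Z$, this analysis must be performed uniformly in the coefficient ring rather than only in characteristic zero, so one cannot simply invoke rational formality of $M$.
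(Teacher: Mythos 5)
The paper does not prove this theorem; it is quoted from \cite[Thm.~5.6]{DSY16} and \cite[Thm.~6.1]{DSY17}. The proof given there is quite different from yours: it uses a \emph{combinatorial cover} of (a deformation retract of) the complement indexed by the intersection lattice $L(\A)$, feeds this into a Mayer--Vietoris-type spectral sequence whose $E_1$ page is built from order complexes of intervals in $L(\A)$ and local-system cohomology of the localizations $M(\A_X)$, and then uses Cohen--Macaulayness of the geometric lattice $L(\A)$ to force the collapse that yields the vanishing $H^q(M;\Z[G_{\ab}])=0$ for $q<r$. No Aomoto complex appears.

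Your preliminary reductions are sound: splitting off the coranked summand to reduce to the essential case; the three boundary observations ($H^q=0$ for $q>r$ by dimension, $H^0(M;R)=R^{\Z^n}=0$, $H^r(M;R)\otimes_R\Z\cong H^r(M;\Z)\ne 0$ by right-exactness of $\otimes$); and the universal-coefficient argument reducing torsion-freeness of $H^r$ and vanishing of $H^q$ for $1\le q<r$ to the single statement $H^q(M;\k[\Z^n])=0$ for $q<r$ and $\k\in\{\Q\}\cup\{\F_p\}$ (a finitely generated $R$-module that is $\Z$-torsion is killed by a single integer, so torsion plus divisible does imply zero). The supersolvable check via Proposition~\ref{prop:ab-fibration} and induction on rank is also correct.

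The genuine gap is the step where you replace the equivariant cochain complex $C^\bullet(\widetilde{M};\k)\otimes_{\k G}\k[\Z^n]$ by the Aomoto complex $(A\otimes_\Z\k[\Z^n],\,\delta_\omega)$, $\omega=\sum_H (t_H-1)e_H$, ``up to quasi-isomorphism.'' These two complexes of free $\k[\Z^n]$-modules have the same ranks and the same \emph{associated graded} with respect to the $I$-adic filtration (this is the content of \cite{PS-tams}), and they agree after specialization at a \emph{generic} character by the non-resonance theorems you cite; but they are not quasi-isomorphic over $\k[\Z^n]$, and no form of ($1$-)formality upgrades the associated-graded statement to an honest quasi-isomorphism. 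One concrete obstruction, discussed at length in this very paper (see \S\ref{subsec:tt-subtori} and \S\ref{subsec:del-b3}): the support of $H^1$ of the equivariant complex is the characteristic variety $\V^1_1(M)$, which for the deleted $\operatorname{B}_3$ arrangement contains a translated subtorus not passing through $\bo$, while the support of $H^1$ of the Aomoto complex is a combinatorial locus that, after taking tangent cones at $\bo$, only sees the resonance variety $\RR^1_1(M)$. More broadly, a quasi-isomorphism of the kind you need would render the characteristic varieties of arrangements combinatorially determined, which is an open problem. So even granting exactness of the Aomoto complex below top degree (which is indeed true and is the Eisenbud--Popescu--Yuzvinsky / Schechtman--Terao--Varchenko circle of ideas), this does not transfer to the exactness of $C^\bullet(M;\k[\Z^n])$, and the argument does not close. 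Replacing this step by the combinatorial-cover spectral sequence of \cite{DSY16} is exactly what that paper does to avoid the issue.
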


In particular, if $\A$ is a central, essential arrangement of hyperplanes in 
$\C^{d+1}$, then $M(\A)$ is an abelian duality space of dimension $d+1$ and 
$U(\A)$ is an abelian duality space of dimension $d$. 

\begin{remark}
\label{rem:stein}
More generally, let $M$ be a connected, smooth, complex 
quasi-projective variety of dimension $m$.   Suppose 
$M$ has a smooth compactification $\overline{M}$ for which the 
components of $\overline{M}\mysetminus M$ form 
a non-empty arrangement of hypersurfaces, $\A$, such that, 
for each submanifold $X$ in the intersection poset $L(\A)$,  
the complement of the restriction of $\A$ to $X$ is either 
empty or a Stein manifold. Then, by 
\cite[Thm.~1.1]{DeS-sigma}, $M$ is an abelian duality space 
of dimension $m$. Another generalization of Theorem \ref{thm:arr-prop}
is given in \cite[Thm.~1.10]{LMW}: If $M$  has a smooth compactification 
$\overline{M}$ with $b_1(\overline{M})=0$ and $M$ admits a proper, 
semi-small map to a complex algebraic torus, then the same conclusion holds. 
\end{remark}

Recall now that arrangement complements are also formal. It follows from 
Theorem \ref{thm:arr-prop} and Corollary \ref{cor:res-prop} that 
both their characteristic and resonance varieties propagate. More 
precisely, we have the following corollary.

\begin{corollary}
\label{arr:arr-prop}
Let $\A$ be a central arrangement of rank $r$, with complement $M=M(\A)$ 
and projectivized complement $U=\P(M)$. Then 
\begin{enumerate}[itemsep=2pt, topsep=-1pt]
\item \label{cjmu1}
$\V^1_1(M)\subseteq \cdots\subseteq\V^{r}_1(M)$ and  
$\RR^1_1(M)\subseteq  \cdots\subseteq\RR^{r}_1(M)$.
\item \label{cjmu2}
$\V^1_1(U)\subseteq \cdots\subseteq\V^{r-1}_1(U)$ and  
$\RR^1_1(U)\subseteq  \cdots\subseteq\RR^{r-1}_1(U)$.
\end{enumerate}
\end{corollary}

In view of part \eqref{cjmu2} of this result, Proposition \ref{prop:cjl-MU} yields 
the following immediate corollary. 

\begin{corollary}
\label{cor:cjl-MU-bis}
Let $\pi\colon M\to U$ be the restriction of the Hopf map. Then,
\begin{enumerate}[itemsep=2pt, topsep=-1pt]
\item \label{mu1-bis}
The induced homomorphism $\pi^*\colon H^1(U;\C)\inj H^1(M;\C)$ restricts 
to isomorphisms 
$\RR^q_1(U)\isom \RR^q_1(M)$ for all $q\ge 1$.
\item \label{mu2-bis}
The induced morphism $\pi^*\colon H^1(U;\C^*)\inj H^1(M;\C^*)$ 
restricts to isomorphisms  
$\VV^q_1(U)\isom \VV^q_1(M)$ for all $q\ge 1$.
\end{enumerate}
\end{corollary}

\subsection{Abelian duality and propagation for Milnor fibers}
\label{subsec:abdual-mf}
We now turn to the Milnor fibration $F_{\bm} \to M \to \C^*$ 
of a multi-arrangement $(\A,\bm)$. 
To start with, let us note that Corollary \ref{cor:cv-mf}, when used in 
conjunction with Proposition \ref{prop:cjl-MU} and Corollary \ref{cor:cjl-MU-bis}, 
has the following consequence.

\begin{corollary}
\label{cor:cv-mf-bis}
Let $\iota_{\bm}\colon F_{\bm}\inj M$ be the inclusion map 
of the Milnor fiber into the complement of $\A$.
\begin{enumerate}[itemsep=2pt, topsep=-1pt]
\item \label{iota-1}
The epimorphism $\iota_{\bm}^*\colon H^1(M;\C)\surj H^1(F_{\bm};\C)$  
restricts to maps $\RR^1_s(M)\to \RR^1_s(F_{\bm})$, for all $s\ge 1$, and 
$\RR^q_1(M) \to \RR^q_1(F_{\bm})$, for all $q\ge 1$.
\item \label{iota-2}
The epimorphism  $\iota_{\bm}^*\colon H^1(M;\C^*)\surj H^1(F_{\bm};\C^*)$  
restricts to maps $\VV^1_s(M)\to \VV^1_s(F_{\bm})$, for all $s\ge 1$, and 
$\VV^q_1(M)\to \VV^q_1(F_{\bm})$, for all $q\ge 1$.
\end{enumerate}
\end{corollary}

The next result strengthens \cite[Thm.~6.7]{DSY17}, 
where only part \eqref{mf-ab} is proved (in the particular case when 
$F=F(\A)$ is the usual Milnor fiber of an essential arrangement), 
but not part \eqref{mf-abf}. 

\begin{theorem}
\label{thm:mf-abel-duality}
Let $\A$ be a central arrangement of rank $r$ and let 
$F_{\bm}=F_{\bm}(\A)$ be the Milnor fiber associated to a multiplicity 
vector $\bm\colon \A\to \N$. 
\begin{enumerate}[itemsep=1pt]
\item \label{mf-ab}
If the monodromy action on $H_1(F_{\bm};\Z)$ is trivial, 
then $F_{\bm}$ is an $\ab$-duality space of dimension $r-1$. 
\item \label{mf-abf}
If the monodromy action on $H_1(F_{\bm};\Q)$ is trivial, 
then $F_{\bm}$ is an $\abf$-duality space 
of dimension $r-1$. 
\end{enumerate}
\end{theorem}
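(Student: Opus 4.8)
The plan is to deduce both parts from the general machinery on $\ab$-exact and $\abf$-exact fibrations (Propositions~\ref{prop:ab-fibration} and~\ref{prop:abf-fibration}), applied to the Milnor fibration $F_{\bm}\to M\to \C^*$, together with the known abelian duality of arrangement complements (Theorem~\ref{thm:arr-prop}). First I would record the two ingredients we already have: the complement $M=M(\A)$ is an $\ab$-duality space of dimension $r=\rank(\A)$ by Theorem~\ref{thm:arr-prop}, and the base $\C^*$ is an $\ab$-duality space of dimension $1$ (it is homotopy equivalent to $S^1$, and $H^q(S^1;\Z[\Z])$ vanishes for $q\ne 1$ and equals $\Z$ for $q=1$ — this is the standard computation for the infinite cyclic cover $\R\to S^1$). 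Since $F_{\bm}$ has the homotopy type of a finite CW-complex of dimension $\le d$ and in fact of dimension $r-1$ in the essential case (after splitting off the corank factor, which contributes trivially to everything in sight, one reduces to $r=d+1$), the dimension arithmetic $\dim F_{\bm} = r-1 = r - 1 = \dim M - \dim \C^*$ works out exactly as required by the hypotheses of Propositions~\ref{prop:ab-fibration} and~\ref{prop:abf-fibration}.

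The key remaining point is $\ab$-exactness (resp.\ $\abf$-exactness) of the fibration $F_{\bm}\to M\to \C^*$. By construction this fibration corresponds to the split exact sequence of fundamental groups in diagram~\eqref{eq:fmz}, namely $1\to \pi_1(F_{\bm})\to \pi_1(M)\xrightarrow{\mu_{\bm}}\Z\to 1$ (split because $M\cong U\times\C^*$). Here $Q=\Z$ is free abelian. For part~\eqref{mf-ab}: the hypothesis says $Q=\Z$ acts trivially on $H_1(F_{\bm};\Z)$, which is precisely the monodromy being homologically trivial over $\Z$; and since the sequence is split exact with $K\triangleleft G$, the result cited from \cite[Prop.~8.4]{Su-abexact} in \S\ref{subsec:ab-exact} says that a split exact sequence with trivial $Q$-action on $K_{\ab}$ automatically yields an $\ab$-exact fibration. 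For part~\eqref{mf-abf}: the hypothesis is triviality of the monodromy on $H_1(F_{\bm};\Q)$, and the analogous result \cite[Prop.~9.4]{Su-abexact} quoted in \S\ref{subsec:ab-exact} says that for a split exact sequence, $\abf$-exactness is equivalent to $Q$ acting trivially on $H_1(F_{\bm};\Q)$. So in both cases $\ab$-exactness (resp.\ $\abf$-exactness) is immediate from the splitting plus the monodromy hypothesis.

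Having verified $\ab$-exactness (resp.\ $\abf$-exactness) and the dimension constraint, I would apply Proposition~\ref{prop:ab-fibration} (resp.\ Proposition~\ref{prop:abf-fibration}) with $E=M$ of dimension $m=r$, $B=\C^*$ of dimension $n=1$, and $F=F_{\bm}$: since $M$ is an $\ab$-duality space ($\abf$-duality space — here one uses that $H_1(M;\Z)\cong\Z^n$ is torsion-free, so the two notions coincide for $M$) and $\C^*$ is an $\ab$-duality space (hence $\abf$-duality space, its $H_1$ being torsion-free), the proposition concludes that $F_{\bm}$ is an $\ab$-duality space (resp.\ $\abf$-duality space) of dimension $m-n = r-1$. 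For the non-essential case one observes that passing from $\A$ to an essential arrangement with the same $M$ up to a product with $\C^{\,\corank(\A)}$ changes neither the Milnor fiber up to a contractible factor nor the duality dimension appropriately; I would state this reduction at the outset. The main obstacle I anticipate is not conceptual but bookkeeping: making sure the dimension count is honestly $r-1$ (i.e.\ that $F_{\bm}$ really has a finite CW-structure of exactly that dimension, and that $H^{r-1}(F_{\bm};\Z[\cdot])$ comes out non-zero and torsion-free rather than merely matching the vanishing in other degrees) — but this non-vanishing and torsion-freeness is supplied for free by the "two out of three" form of Propositions~\ref{prop:ab-fibration} and~\ref{prop:abf-fibration}, so once exactness is in hand there is nothing left to check.
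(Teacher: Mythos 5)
Your proof matches the paper's argument: both deduce the result by applying Propositions~\ref{prop:ab-fibration} and~\ref{prop:abf-fibration} to the Milnor fibration $F_{\bm}\to M\to\C^*$, using Theorem~\ref{thm:arr-prop} for abelian duality of $M$, the fact that $\C^*\simeq S^1$ is an $\ab$- and $\abf$-duality space of dimension~$1$, and the criteria from \S\ref{subsec:ab-exact} (citing~\cite[Prop.~8.4, Prop.~9.4]{Su-abexact}) to translate the trivial-monodromy hypothesis into $\ab$- (resp.\ $\abf$-)exactness via the split exact sequence~\eqref{eq:ses-fm}. You spell out the exactness verification and the dimension bookkeeping (including the reduction to the essential case) more explicitly than the paper's terse proof, but the logical structure is the same.
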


\begin{proof}
From Theorem \ref{thm:arr-prop}, we know that the total space of the 
Milnor fibration, $M=M(\A)$, is an $\ab$-duality space of dimension $r$. 
Thus, $M$ is also and $\abf$-duality space of the same dimension, since 
$H_1(M;\Z)=\Z^{\abs{\A}}$ is torsion-free. Clearly, the base of the 
fibration, $B=\C^*$, is both an $\ab$- and $\abf$-duality space of 
dimension $1$. In view of our hypothesis on the monodromy of 
the fibration, the two claims regarding the fiber $F_{\bm}$ now follow 
directly from Propositions \ref{prop:ab-fibration} and \ref{prop:abf-fibration}, 
respectively.
\end{proof}

Applying this theorem, we obtain the following corollary regarding 
propagation of cohomology jump loci of Milnor fibers of arrangements 
with trivial algebraic monodromy.

\begin{corollary}
\label{cor:mf-propagate}
Let $\A$ be a central arrangement of rank $r$, and let $\bm\colon \A\to \N$ 
be a choice of multiplicities.  
\begin{enumerate}[itemsep=2pt]
\item \label{cjm1}
If the monodromy action on $H_1(F_{\bm};\Z)$ is trivial, then the 
characteristic varieties of $F_{\bm}$ propagate; that is,
$\V^1_1(F_{\bm})\subseteq \V^2_1(F_{\bm})
\subseteq\cdots\subseteq\V^{r-1}_1(F_{\bm})$.

\item \label{cjm2}
If the monodromy action on $H_1(F_{\bm};\Q)$ is trivial, then the 
restricted characteristic varieties of $F_{\bm}$ propagate; that is,
$\WW^1_1(F_{\bm})\subseteq \WW^2_1(F_{\bm})\subseteq
\cdots\subseteq\WW^{r-1}_1(F_{\bm})$.

\item \label{cjm3}
If the monodromy action on $H_i(F_{\bm};\Q)$ is trivial 
for $i\le q$, then the resonance varieties of $F_{\bm}$ propagate 
in that range; that is, 
$\RR^1_1(F_{\bm})\subseteq\cdots\subseteq \RR^{q}_1(F_{\bm})$.

\item \label{cjm4}
If the monodromy action on $H_i(F_{\bm};\Q)$ is trivial 
for $i\le r-2$, then the resonance varieties of $F_{\bm}$ propagate; that is, 
$\RR^1_1(F_{\bm})\subseteq\cdots\subseteq \RR^{r-1}_1(F_{\bm})$.
\end{enumerate}
\end{corollary}

\begin{proof}
Claim \eqref{cjm1} follows from Theorem \ref{thm:cv-prop}  and 
Theorem \ref{thm:mf-abel-duality}, part \eqref{mf-ab}, while 
Claim \eqref{cjm2} follows from Theorem \ref{thm:cw-prop}  and 
Theorem \ref{thm:mf-abel-duality}, part \eqref{mf-abf}.

Claims \eqref{cjm3} and \eqref{cjm4} follow from claim \eqref{cjm2} 
and the Tangent Cone theorem, using Corollary \ref{cor:mf-formal}, 
parts \eqref{ff1} and \eqref{ff2}, respectively.
\end{proof}

In particular, if $\A$ is a central, essential arrangement in $\C^3$ and 
the monodromy action on $H_1(F_{\bm};\Q)$ is trivial, then  
$\WW^1_1(F_{\bm})\subseteq \WW^2_1(F_{\bm})$ and 
$\RR^1_1(F_{\bm})\subseteq \RR^{2}_1(F_{\bm})$.

\begin{remark}
\label{rem:affine}
More generally, let $f\in \C[z_0,\dots ,z_d]$ be a homogeneous 
polynomial of degree $n$, and set $M=\C^{d+1}\setminus \{f=0\}$. 
We then have a (global) Milnor fibration, $f\colon M\to \C^*$, with fiber 
$F=f^{-1}(1)$ and monodromy $h\colon F\to F$ given by 
$h(z)=e^{2\pi \ii/n} z$. Now suppose $M$ satisfies one of the 
conditions laid out in Remark \ref{rem:stein}, so that $M$ is 
an abelian duality space of dimension $d+1$, and suppose 
further that $h_*\colon H_1(F;\Q)\to H_1(F;\Q)$ is the identity. 
Then similar proofs show that $F$ is an 
$\abf$-duality space of dimension $d$ and the 
restricted characteristic varieties of $F$ propagate, that is,
$\WW^1_1(F)\subseteq \cdots\subseteq\WW^{d}_1(F)$.
\end{remark}

\section{Trivial algebraic monodromy and lower central series}
\label{sect:lcs}
In this section, we investigate the lower central series ranks and the Chen ranks 
of the fundamental groups of Milnor fibers of arrangements for which the algebraic 
monodromy is trivial.

\subsection{Lower central series and nilpotent quotients}
\label{subsec:lcs}
The lower central series (LCS) of a group $G$ is defined inductively 
by setting $\gamma_1 (G)=G$ and 
$\gamma_{k+1}(G) =[G,\gamma_k (G)]$ for all $k\ge 1$. 
This is a central series (i.e., 
$[G,\gamma_k(G)]\subseteq \gamma_{k+1}(G)$ for all $k\ge 1$), 
and thus, a normal series (i.e., $\gamma_k(G)\triangleleft G$ 
for all $k\ge 1$). Consequently, each LCS quotient,
\begin{equation}
\label{eq:grG}
\gr_k(G) \coloneqq \gamma_k(G)/\gamma_{k+1}(G),
\end{equation}
lies in the center of $G/\gamma_{k+1}(G)$, and thus is an abelian 
group. The first such quotient, $\gr_1(G)=G/\gamma_{2}(G)$, 
coincides with the abelianization $G_{\ab}=H_1(G;\Z)$. 
The  associated graded Lie algebra of $G$ is the direct 
sum $\gr(G)=\bigoplus_{k\ge 1} \gr_k(G)$; the addition in $\gr(G)$ 
is induced from the group multiplication, 
while the Lie bracket (which is compatible with the grading) 
is induced from the group commutator. 
By construction, the Lie algebra $\gr(G)$ is generated by 
its degree $1$ piece. Thus, if $G_{\ab}$ is finitely generated, 
then so are the LCS quotients of $G$;  we let 
$\phi_k(G)\coloneqq \rank \gr_{k}(G)$ be ranks of those quotients. 

Replacing in this construction the group $G$ by its maximal 
metabelian quotient, $G/G''$, leads to the Chen Lie algebra  
$\gr(G/G'')$, and, in the case when $G_{\ab}$ is finitely generated, 
the Chen ranks $\theta_k(G)\coloneqq \rank \gr_k(G/G'')$. It is readily seen 
that $\theta_k(G)\le \phi_k(G)$ for all $k\ge 1$, with equality for $k\le 3$. 

For each $k\ge 1$, the group $G/\gamma_{k+1}(G)$ is nilpotent, 
and in fact, the maximal $k$-step nilpotent quotient of $G$.  
Letting $q_{k} \colon G/\gamma_{k+1}(G) \to G/\gamma_{k}(G)$ 
be the projection maps, we obtain a tower of nilpotent groups,    
starting at $G/\gamma_{2}(G)=G_{\ab}$. Moreover, at each 
stage in the tower, there is a central extension, 
\begin{equation}
\label{eq:extension}
\begin{tikzcd}[column sep=20pt]
0\ar[r]
	&\gr_{k}(G) \ar[r]
	& G/\gamma_{k+1}(G) \ar[r, "q_k"]
	& G/\gamma_k(G)\ar[r]
	& 0\, ,
\end{tikzcd}
\end{equation}
which is classified by an extension class (or, $k$-invariant), $\chi_k \colon 
H_2(  G/\gamma_k(G);\Z )\to \gr_{k}(G)$.

\subsection{Lower central series of arrangement groups}
\label{subsec:lcs-arr}

The LCS ranks, the Chen ranks, and the nilpotent quotients of arrangement 
groups have been much studied. The most basic example is 
that of the free group, $F_n=\pi_1(\C\mysetminus \{\text{$n$ points}\})$, 
of rank $n\ge 2$. 
Work of P.~Hall, W.~Magnus, and E.~Witt from the 1930s shows that, for each 
$k\ge 1$, the abelian group $\gr_k(F_n)$ is torsion-free, of rank equal to 
\begin{equation}
\label{eq:lcs-free}
\phi_k(F_n)=\tfrac{1}{k}\sum_{d\mid k} \mu(d) n^{k/d},
\end{equation}
where $\mu\colon \N\to \{0,\pm 1\}$ denotes the M\"{o}bius 
function. Furthermore, work of K.T.~Chen from 1951 shows 
that the group $\gr_k(F_n/F_n'')$ are also torsion-free, of rank 
equal to 
\begin{equation}
\label{eq:chen-free}
\theta_k(F_n)=(k-1) \binom{n+k-2}{k}\: \text{ for $k\ge 2$}. 
\end{equation}

Now let $M=M(\A)$ be any arrangement complement, and let $G=\pi_1(M)$ 
be its fundamental group. As mentioned previously, $M$ is formal, and hence 
the group is $G$ is $1$-formal. Classical results of Quillen and Sullivan in 
rational homotopy theory insure that the LCS ranks $\phi_k(G)$ are determined 
by the (truncated) cohomology algebra $H^{\le 2}(M;\Q)$. Since this algebra is 
determined by the (truncated) intersection lattice $L_{\le 2}(\A)$, it follows that 
the LCS ranks of $G$ are combinatorially determined. Explicit combinatorial formulas for 
these ranks are known in a few cases, e.g., when $\A$ is either supersolvable \cite{FR}  
or decomposable \cite{PS-cmh06}, but no such formula is known in general, 
even for $\phi_3(G)$. As shown in \cite{PS-imrn04}, the Chen ranks $\theta_k(G)$ 
are also combinatorially determined. An explicit combinatorial formula was 
conjectured in \cite{Su01}, expressing those ranks in terms of the dimensions 
of the irreducible components of $\RR^1_1(M)$, at least for $k$ large enough.
This formula has been verified by Cohen and Schenck in \cite{CSc-adv} (see also 
\cite{AFRS} for a more general setting). 

Turning to the nilpotent quotients of an arrangement group $G=G(\A)$, 
it was shown in \cite{PrS20} that all the quotients $G/\gamma_k(G)$ are 
combinatorially determined when $\A$ is decomposable (see Section 
\ref{subsec:decomp} below for more on this). On the other 
hand, Rybnikov \cite{Ryb} showed that the third nilpotent quotient, 
$G/\gamma_4(G)$, is {\em not}\/ combinatorially determined, in general. 
Nevertheless, the second nilpotent quotient, $G/\gamma_3(G)$, 
is always determined by $L_{\le 2}(\A)$. To see why, recall 
from Section \ref{subsec:OS} that $H^*(M;\Z)=E/I$, where $E=\bigwedge G_{\ab}$ 
and $I=I(\A)$ is the Orlik--Solomon ideal associated to $L(\A)$. 
As shown in \cite[Prop.~1.14]{MS00}, the abelian group 
$\gr_2(G)$ is the $\Z$-dual of $I^2$ (and thus, it is torsion-free), 
and the exact sequence \eqref{eq:extension} with $k=2$ 
is classified by the homomorphism 
$\chi_2 \colon H_2( G_{\ab};\Z )\to \gr_2(G)$ 
dual to the inclusion map $I^2\inj  E^2$. Set $n=\abs{\A}$ 
and let $F_n$ be the free group on generators $\{x_H : H\in \A\}$. 
It follows that $G/\gamma_3(G)$ is the quotient of the free, 
$2$-step nilpotent group $F_n/\gamma_3(F_n)$ by the normal subgroup 
generated by all commutation relations of the form 
\begin{equation}
\label{eq:nilp2-arr}
\Big[x_H , \prod_{\substack{K\in \A\\[2pt] K\supset X}} x_{K}\Big] \, ,
\end{equation}
indexed by all pairs of hyperplanes $H\in \A$ and flats $X\in L_2(\A)$ 
such that $H\supset X$.  
From this description, it is apparent that the second nilpotent quotient 
of an arrangement group is combinatorially determined; that is, if 
$L_2(\A)\cong L_2(\B)$, then $G(\A)/\gamma_3(G(\A)) \cong 
G(\B)/\gamma_3(G(\B))$.

\subsection{LCS and Chen ranks of Milnor fibers}
\label{subsec:lcs-trivial-mono}
Let $(\A,\bm)$ be a multi-arrangement, with complement $M=M(\A)$. 
Let $F_{\bm}=F_{\bm}(\A)$  be the Milnor fiber
and let $h\colon F_{\bm}\to F_{\bm}$ be the monodromy of the 
corresponding Milnor fibration. 

Denoting by $G=\pi_1(M)$ and $K=\pi_1(F_{\bm})$ the fundamental 
groups of the respective spaces, we have a (split) exact 
sequence, $1\to K\to G\to \Z\to 1$, so that the arrangement group 
splits as the semidirect product $G=K\rtimes_{\varphi} \Z$, where 
$\varphi=h_{\sharp}\in \Aut(K)$ is the automorphism of $K=\pi_1(F_{\bm})$ 
induced by $h$. Note that $\varphi_{\ab}\colon K_{\ab}\to K_{\ab}$ may 
be identified with the (integral) algebraic monodromy, 
$h_*\colon H_1(F_{\bm};\Z)\to H_1(F_{\bm};\Z)$.

\begin{theorem}
\label{thm:trivial-mono-z}
Suppose $h_*\colon H_1(F_{\bm};\Z)\to H_1(F_{\bm};\Z)$ 
is the identity map. We then have the following isomorphisms of graded 
Lie algebras.
\begin{enumerate}[itemsep=2.5pt, topsep=-1pt]
\item \label{z1}
$\gr(G) \cong \gr(K)\rtimes_{\bar{\varphi}} \Z$, where 
$\bar\varphi\colon \Z \to \Der(\gr(K))$ is the morphism of 
Lie algebras induced by the homomorphism 
$\varphi\colon \Z\to \Aut(K)$ sending $1$ to $h_{\sharp}$.
\item \label{z2}
$\gr_{\ge 2}(K) \cong \gr_{\ge 2}(G)$.
\item \label{z3}
$\gr_{\ge 2}(K/K'')  \cong  \gr_{\ge 2}(G/G'')$. 
\end{enumerate}
\end{theorem}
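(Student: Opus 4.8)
The plan is to set up everything via the split exact sequence $1\to K\to G\to\Z\to 1$, where $G=K\rtimes_\varphi\Z$ with $\varphi=h_\sharp$, and to leverage the triviality hypothesis $h_*=\id$ on $H_1(F_{\bm};\Z)=K_{\ab}$. First I would recall the standard fact that, for a split extension $1\to K\to G\to A\to 1$ with $A$ free abelian, the associated graded Lie algebra behaves well: since the sequence is split, $\gamma_k(G)\cap K$ relates to $\gamma_k(K)$ in a controlled way. The key structural input is part \eqref{z1}, which identifies $\gr(G)$ with the semidirect product $\gr(K)\rtimes_{\bar\varphi}\Z$ of graded Lie algebras, where $\bar\varphi\colon\Z\to\Der(\gr(K))$ is induced by $\varphi$. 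Granting \eqref{z1}, the derivation $\bar\varphi(1)$ is the associated-graded of the automorphism $\varphi$; because $h_*=\id$ on $K_{\ab}=\gr_1(K)$, and because $\gr(K)$ is generated in degree $1$, the induced derivation $\bar\varphi(1)$ acts trivially on all of $\gr(K)$. Indeed, a graded derivation of a Lie algebra generated in degree one is determined by its action on the degree-one part, and here that action is zero; hence $\bar\varphi$ is the trivial map and $\gr(G)\cong\gr(K)\times\Z$ as graded Lie algebras, with the $\Z$ factor sitting in degree $1$. Comparing graded pieces in degrees $\ge 2$ then gives $\gr_{\ge 2}(K)\cong\gr_{\ge 2}(G)$, which is \eqref{z2}.

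For \eqref{z3} I would run the same argument one level down, replacing $G$ by its maximal metabelian quotient $G/G''$ and $K$ by $K/K''$. The first point to check is that the split extension descends: the maximal metabelian quotient of $G=K\rtimes_\varphi\Z$ is $(K/K'')\rtimes_{\bar\varphi}\Z$ — this follows because $G''$ is generated by $K''$ together with the double commutators, and since $\Z$ is abelian the quotient by $K''$ of $G$ is metabelian precisely when we kill $(K/K'')'$ inside the $K$ part, so $G/G''\cong(K/K'')\rtimes\Z$ where the action is the one induced by $\varphi$. (One has to be slightly careful: one should verify $G''\cap K = K''$ modulo the relevant identifications, using that the extension splits.) Since $\varphi$ still acts trivially on $(K/K'')_{\ab}=K_{\ab}$, the same reasoning as above — the Chen Lie algebra $\gr(K/K'')$ is also generated in degree one, so a graded derivation trivial in degree one is trivial — shows $\gr(G/G'')\cong\gr(K/K'')\times\Z$, and comparing degrees $\ge 2$ yields \eqref{z3}.

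I expect the main obstacle to be the proof of part \eqref{z1} itself, i.e.\ establishing the semidirect-product decomposition $\gr(G)\cong\gr(K)\rtimes_{\bar\varphi}\Z$ at the level of graded Lie algebras. This requires showing that the lower central series of $G$ restricts compatibly to that of $K$ — concretely, that $\gamma_k(G)=\gamma_k(K)\cdot(\gamma_k(G)\cap\langle t\rangle)$ in an appropriate sense and that the resulting short exact sequences of abelian groups $0\to\gr_k(K)\to\gr_k(G)\to(\text{degree-}k\text{ part of }\Z)\to 0$ split compatibly with brackets. The cleanest route is probably to invoke the functoriality of the Malcev/Quillen functor or the known behavior of $\gr$ under split extensions with free-abelian quotient (the map $G\to\Z$ is split, so $\Z$ injects as a subgroup and one gets a retraction), combined with the observation that the generator $t$ of the $\Z$ factor is a non-trivial element of $\gr_1(G)$ that is central modulo higher terms exactly when $\varphi$ acts trivially on $\gr_1(K)$; in general it is not central, and the bracket $[t,-]$ on $\gr(K)$ is precisely the derivation $\bar\varphi(1)$. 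Once the decomposition \eqref{z1} is in hand, statements \eqref{z2} and \eqref{z3} are immediate consequences as sketched, and the numerical equalities $\phi_k(K)=\phi_k(G)$, $\theta_k(K)=\theta_k(G)$ for $k\ge 2$ follow by taking ranks.
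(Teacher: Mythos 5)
Your architecture matches the paper's: take part~\eqref{z1} as the key structural input (the paper cites Falk--Randell and \cite[Cor.~6.7]{Su-lcs-mono} for it), then deduce \eqref{z2} and \eqref{z3}. The deduction of \eqref{z2}, however, rests on a claim that is false as stated. You assert that since $h_*=\id$ on $K_{\ab}=\gr_1(K)$ and $\gr(K)$ is generated in degree~$1$, the derivation $\bar\varphi(1)$ vanishes, so $\gr(G)\cong\gr(K)\times\Z$. But $\bar\varphi(1)$ is not the degree-preserving map $\gr(\varphi)$ (which \emph{is} the identity under the hypothesis); it is the degree-$(+1)$ derivation sending the class of $x\in K$ to the class of $\varphi(x)x^{-1}$ one filtration step deeper, i.e.\ the bracket with the degree-$1$ generator coming from the chosen splitting. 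The hypothesis forces $\varphi(x)x^{-1}\in\gamma_2(K)$, so its image in $\gr_1(K)$ is zero, but its class in $\gr_2(K)$ can be nonzero. A test case: $K=F_2=\langle a,b\rangle$, $\varphi$ conjugation by $a$; then $\varphi_{\ab}=\id$ yet $\bar\varphi(1)(\bar b)=\overline{[b,a]}\ne 0$ in $\gr_2(K)$. So triviality of $\bar\varphi$ cannot be read off from $\varphi_{\ab}=\id$, and the intermediate conclusion $\gr(G)\cong\gr(K)\times\Z$ does not follow the way you argue.

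Fortunately, \eqref{z2} follows from \eqref{z1} with no appeal to $\bar\varphi$ being trivial. As a graded abelian group, $\gr(K)\rtimes_{\bar\varphi}\Z$ is $\gr(K)\oplus\Z$ with $\Z$ concentrated in degree $1$, so $\gr_k(G)=\gr_k(K)$ for every $k\ge 2$; and the semidirect twist only affects brackets involving the degree-$1$ generator of $\Z$, so the restricted bracket on $\gr_{\ge 2}$ is unchanged. (Concretely: $G/K\cong\Z$ is abelian, so $\gamma_2(G)\subseteq K$, which together with the Falk--Randell equality $\gamma_k(G)\cap K=\gamma_k(K)$ gives $\gamma_k(G)=\gamma_k(K)$ for all $k\ge 2$.) Your outline for \eqref{z3} is sound: the same observation gives $G'=\gamma_2(G)=\gamma_2(K)=K'$, hence $G''=K''$ and $G/G''\cong(K/K'')\rtimes\Z$ with the induced (still $\ab$-trivial) action, after which the argument reruns; this is in substance what the paper's citation to \cite[Cor.~8.10]{Su-abexact} packages. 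You would, of course, still need to supply a proof of \eqref{z1} itself, which you acknowledge leaving open.
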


\begin{proof}
Part \eqref{z1} follows from a well-known result of Falk and Randell \cite[Thm.~3.1]{FR}, 
as refined in \cite[Cor.~6.7]{Su-lcs-mono}. Part \eqref{z2} is a direct consequence of 
part \eqref{z1}. Finally, part \eqref{z3} follows from \cite[Cor.~8.10]{Su-abexact}.
\end{proof}

\begin{theorem}
\label{thm:trivial-mono-q}
Suppose $h_*\colon H_1(F_{\bm};\Q)\to H_1(F_{\bm};\Q)$ is the identity map.  
We then have the following isomorphisms of graded Lie algebras.
\begin{enumerate}[itemsep=2.5pt, topsep=-1pt]
\item \label{q1}
$\gr(G)\otimes \Q \cong (\gr(K) \rtimes_{\bar{\varphi}} \Z) \otimes \Q$. 
\item  \label{q2}
$\gr_{\ge 2}(K)\otimes \Q \cong \gr_{\ge 2}(G) \otimes \Q$.
\item  \label{q3}
$\gr_{\ge 2}(K/K'') \otimes \Q  \cong  \gr_{\ge 2}(G/G'') \otimes \Q$. 
\end{enumerate}
Consequently, $\phi_k(\pi_1(F_{\bm}))= \phi_k(\pi_1(M))$ and 
$\theta_k(\pi_1(F_{\bm}))= \theta_k(\pi_1(M))$ for all $k\ge 2$.
\end{theorem}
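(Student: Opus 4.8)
The plan is to deduce Theorem \ref{thm:trivial-mono-q} from Theorem \ref{thm:trivial-mono-z} by a rationalization argument, mirroring the integral case step by step but tensoring with $\Q$ throughout. The key observation is that the hypothesis $h_*=\id$ on $H_1(F_{\bm};\Q)$ means precisely that $\varphi_{\ab}\otimes\Q$ is the identity on $K_{\ab}\otimes\Q$, which is the rational analog of the monodromy-triviality hypothesis used in the integral theorem.

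First I would establish part \eqref{q1}: the semidirect product decomposition $\gr(G)\otimes\Q\cong(\gr(K)\rtimes_{\bar\varphi}\Z)\otimes\Q$. This follows from the rational version of the Falk--Randell result \cite[Thm.~3.1]{FR}, as refined in \cite[Cor.~6.7]{Su-lcs-mono}; the point is that when the monodromy acts trivially on $H_1(F_{\bm};\Q)$, the lower central series of $G$ behaves, rationally, as a split extension of that of $K$ by the central $\Z$-factor. Then part \eqref{q2} is immediate: since the $\Z$-factor sits in degree $1$ and $\bar\varphi$ acts trivially in degree $1$ (by hypothesis) hence, by the derivation property, one checks the action on $\gr_{\ge 2}(K)\otimes\Q$ contributes nothing to the graded pieces in degrees $\ge 2$, so $\gr_k(G)\otimes\Q\cong\gr_k(K)\otimes\Q$ for all $k\ge 2$. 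For part \eqref{q3}, I would invoke \cite[Cor.~8.10]{Su-abexact} in its rational form, or alternatively observe that passing to the maximal metabelian quotient commutes with the constructions above, so the same semidirect-product reasoning applied to $G/G''$ and $K/K''$ gives $\gr_{\ge 2}(K/K'')\otimes\Q\cong\gr_{\ge 2}(G/G'')\otimes\Q$.

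The final assertion about the ranks is then a formality: $\phi_k(\pi_1(F_{\bm}))=\dim_\Q\gr_k(K)\otimes\Q=\dim_\Q\gr_k(G)\otimes\Q=\phi_k(\pi_1(M))$ for $k\ge 2$ by part \eqref{q2}, and similarly $\theta_k(\pi_1(F_{\bm}))=\theta_k(\pi_1(M))$ for $k\ge 2$ by part \eqref{q3}, since $\theta_k(G)=\dim_\Q\gr_k(G/G'')\otimes\Q$ by definition.

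The main obstacle I anticipate is verifying carefully that the cited results \cite{FR, Su-lcs-mono, Su-abexact} apply with only the \emph{rational} triviality hypothesis rather than the integral one—that is, confirming that the proofs of the semidirect-product splitting of $\gr$ and of the metabelian statement genuinely only use that the monodromy is trivial after tensoring with $\Q$. In the integral setting (Theorem \ref{thm:trivial-mono-z}) one has honest isomorphisms of Lie algebras over $\Z$; here one must be content with isomorphisms after $\otimes\,\Q$, and the role of the split exact sequence $1\to K\to G\to\Z\to 1$ together with $1$-formality of $G$ (Corollary \ref{cor:mf-formal}) should be enough to push the argument through. Once that is in place the rest is bookkeeping.
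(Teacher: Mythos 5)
Your proposal follows essentially the same route as the paper: use the split exact sequence $1\to K\to G\to\Z\to 1$, identify $\gr(G)$ with a semidirect product $\gr(K)\rtimes_{\bar\varphi}\Z$ via a Falk--Randell--type result for split extensions, deduce part \eqref{q2} from part \eqref{q1} by dropping the degree-$1$ summand, and get part \eqref{q3} from \cite[Cor.~8.10]{Su-abexact}. The rank identities are then bookkeeping, as you say.

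The one place where your write-up differs from the paper is precisely the spot you flag as the ``main obstacle.'' You cite \cite[Cor.~6.7]{Su-lcs-mono} (the integral result used in Theorem~\ref{thm:trivial-mono-z}) and then note that one must verify the integral argument survives under the weaker hypothesis that $h_*$ is trivial only on $H_1(F_{\bm};\Q)$. That concern is genuine: when $H_1(F_{\bm};\Z)$ has torsion, $\varphi_{\ab}$ need not be the identity integrally, so the integral splitting of $\gr(G)$ cannot be invoked. The paper sidesteps this not by re-proving the integral result rationally, but by citing the separate, already-rational statements in \cite{Su-lcs-mono} (Proposition~7.5 and Theorem~9.5), which are formulated under exactly the $\Q$-triviality hypothesis. (Likewise, \cite[Cor.~8.10]{Su-abexact} as stated covers both cases, so no ``rational form'' needs to be manufactured.) Also note that $1$-formality of $G=\pi_1(M)$ is automatic from formality of the arrangement complement and is not supplied by Corollary~\ref{cor:mf-formal}, which concerns $F_{\bm}$; the formality of $G$ is indeed an input, but to the proof of the cited propositions, not something you need to re-derive here. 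With the correct citations in place, your argument is complete.
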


\begin{proof}
Parts \eqref{q1} and \eqref{q2} follow from Proposition 7.5 and 
Theorem 9.5 from \cite{Su-lcs-mono}, while 
part \eqref{q3} follows from \cite[Cor.~8.10]{Su-abexact}. 
The equality between the respective LCS and Chen ranks follows 
at once from parts \eqref{q2} and \eqref{q3}.
\end{proof}

Consequently, if the algebraic monodromy $h_*\colon H_1(F_{\bm};\Q)\to H_1(F_{\bm};\Q)$  
is trivial, then both the LCS ranks and the Chen ranks of $\pi_1(F_{\bm})$ are determined 
by $L_{\le 2}(\A)$. Moreover, letting $U=\P(M)$, we have that 
$\phi_k(\pi_1(F_{\bm}))= \phi_k(\pi_1(U))$ and 
$\theta_k(\pi_1(F_{\bm}))= \theta_k(\pi_1(U))$ for all $k\ge 1$.

\section{Constructions of arrangements with trivial algebraic monodromy}
\label{sect:constructions}

In this section, we describe several classes of hyperplane arrangements 
for which the Milnor fibration has trivial algebraic monodromy (in some range).

\subsection{Boolean arrangements}
\label{subsec:boolean}
Arguably the simplest kind of arrangements are the {\em Boolean 
arrangements}, $\B_n$, consisting of the coordinate hyperplanes 
$\set{z_i=0}$ in $\C^n$. The intersection lattice $L(\B_n)$ is the 
Boolean lattice of subsets of $\{0,1\}^n$, while the complement 
$M(\B_n)$ is the complex algebraic torus $(\C^*)^n$. 

Given a multiplicity function $\bm\colon \B_n\to \N$, the  
map $f_{\bm}\colon (\C^*)^n \to \C^*$, 
$z\mapsto z_1^{m_1}\cdots z_{n}^{m_n}$ is 
a morphism of complex algebraic groups. Hence, the 
Milnor fiber $F_{\bm}= \ker (f_{\bm})$ 
is an algebraic subgroup, realized as 
the disjoint union of $\gcd(\bm)$ copies of $(\C^*)^{n-1}$,   
with the monodromy automorphism,  
$h\colon F_{\bm}\to F_{\bm}$, permuting those 
copies in a circular fashion. 

Now suppose $\gcd(\bm)=1$. Then $F_{\bm}$ is  
an algebraic $(n-1)$-torus and $h$ is isotopic 
to the identity, through the isotopy 
$h_t(z) = e^{2\pi \ii t/N} z$.  Thus, the bundle 
$F_{\bm} \to M(\B_n) \to \C^*$ is trivial, 
and the algebraic monodromy,
$h_*\colon H_*(F_{\bm};\Z)\to H_*(F_{\bm};\Z)$,  
is equal to the identity map. Consequently, 
the characteristic polynomial of the algebraic monodromy 
is given by $\Delta_q(t)=(t-1)^{\binom{n-1}{q}}$ for $0<q<n$.

\subsection{Generic arrangements}
\label{subsec:generic}
Let $\A$ be a central arrangement of $n$ hyperplanes in $\C^{d+1}$, 
where $n>d+1>2$. We say $\A$ is  {\em generic}\/ if the intersection of 
every subset of $d+1$ distinct hyperplanes is the origin, in which case, 
$\A$ is the cone over an affine, general position arrangement 
$\A'$ of $n-1$ hyperplanes in $\C^d$, see \cite{OR, OT}. 

By a classical result of Hattori (\cite[Thm.~1]{Ha}), the complement of 
$\A'$ is homotopy equivalent to the $d$-skeleton of the real, 
$(n-1)$-dimensional torus $T^{n-1}$.  Since $U(\A)\cong M(A')$, 
it follows that $\pi_1(U(\A))=\Z^{n-1}$ and $b_q(U(\A))=\binom{n-1}{q}$ 
for $q\le d$. Moreover, if $\rho\colon \pi_1(U(\A))\to \C^*$ is a non-trivial 
character, then \cite[Thm.~4]{Ha} insures that $H_q(U(\A);\C_{\rho})=0$ 
for $q\ne d$ and $\dim_{\C} H_d(U(\A);\C_{\rho})=\binom{n-2}{d}$. 
It follows that the characteristic varieties of $U(\A)$ are given by 
\begin{equation}
\label{eq:cv-generic}
\V^q_s(U(\A))=\begin{cases} 
\{\bo\} &\text{for $q<d$ and $1\le s \le \binom{n-1}{q}$},
\\[3pt]
\C^{n-1} &\text{for $q=d$ and $1\le s\le \binom{n-2}{d}$}
\end{cases}
\end{equation}
and are empty otherwise.

Now let $\bm\colon \A\to \N$ be a choice of multiplicities, and let $F_{\bm}$ be 
the corresponding Milnor fiber.  Applying formula \eqref{eq:betti-cover}, we find that
$b_q(F_{\bm}) = \binom{n-1}{q} $ for $q \le d-1$ and 
$b_d(F_{\bm}) =  \binom{n-1}{d}+(n-1)\binom{n-2}{d }$.
Consequently, the algebraic monodromy $h_q\colon H_q(F_{\bm};\Q)\to H_q(F_{\bm};\Q)$ 
is equal to the identity if $q<d$, and the characteristic polynomial of $h_q$ 
takes the form
\begin{equation}
\label{eq:delta-generic}
\Delta_q(t)=\begin{cases} 
(t-1)^{\binom{n-1}{q} }&\text{if $q \le d-1$},
\\[2pt]
(t-1)^{\binom{n-2}{d-1} } (t^n-1)^{\binom{n-2}{d}}&\text{if $q =d$}.
\end{cases}
\end{equation}
In the case when $F_{\bm}=F(\A)$ is the usual Milnor fiber, this recovers a 
result of Orlik and Randell \cite{OR} (see also \cite{OT, CS95}).

\subsection{Decomposable arrangements}
\label{subsec:decomp}
Recall from Section \ref{subsec:pi1} that every flat $X\in L_2(\A)$ gives 
rise to a ``localized" sub-arrangement, $\A_{X}$, which consists of all hyperplanes 
$H\in \A$ that contain $X$. Furthermore, the inclusions $\A_{X}\subset \A$ 
yield inclusions of complements, $j_X\colon M(\A) \inj M(\A_{X})$, which 
assemble into a map 
\begin{equation}
\label{eq:jmap}
\begin{tikzcd}[column sep=18pt]
j=(j_X) \colon M \ar[r]& \prod_{X\in L_2(\A)} M(\A_X).
\end{tikzcd}
\end{equation}

Let $j_{\sharp}\colon G(\A) \to \prod_{X\in L_2(\A)} G(\A_X)$ 
be the induced homomorphism on fundamental groups.
It was shown in \cite{Fa89, PS-cmh06} that the morphism 
\begin{equation}
\label{eq:gr-jmap}
\begin{tikzcd}[column sep=18pt]
\gr(j_{\sharp})\colon 
\gr(G(\A)) \ar[r]& \prod_{X\in L_2(\A)}  \gr(G(\A_X))
\end{tikzcd}
\end{equation}
between the respective associated 
graded Lie algebras is an isomorphism in degree $2$ and, after tensoring with $\Q$, 
becomes surjective in all degrees greater than $2$. Since each of the groups 
$G(\A_X)$ is isomorphic to $F_{\mu(X)} \times \Z$, it follows that the LCS ranks of $G(\A)$ 
admit the lower bounds 
\begin{equation}
\label{eq:phis-min}
\phi_k(G(\A))\ge \sum_{X\in L_2(\A)}\phi_k(F_{\mu(X)}) 
\end{equation}
for all $k\ge 2$, with equality for $k=2$.

Following \cite{PS-cmh06}, we say that a hyperplane arrangement $\A$ is 
{\em decomposable}\/ (over $\Q$) if the third LCS rank of the group $G(\A)$ 
attains the lower bound from \eqref{eq:phis-min}; that is,
\begin{equation}
\label{eq:phi3-min}
\phi_3(G(\A))=\sum_{X\in L_2(\A)} \binom{\mu(X)}{2} .
\end{equation}

It is shown in \cite{PS-cmh06} that once this condition is satisfied, 
equality is attained in \eqref{eq:phis-min} for all $k\ge 2$; in fact, 
the morphism  $\gr(j_{\sharp})\otimes \Q$ restricts to an isomorphism 
of graded Lie algebras in degrees $\ge 2$. 

More generally, let $\h(\A)$ be the holonomy Lie algebra of $\A$, that 
is, the quotient of the free Lie algebra on generators $\{x_H : H\in \A\}$ 
by the ideal generated by the Lie brackets of the form 
\begin{equation}
\label{eq:holo}
\Big[x_H , \sum_{\substack{K\in \A\\[2pt] K\supset X}} x_{K}\Big] \, ,
\end{equation}
for all hyperplanes $H\in \A$ and $2$-flats $X\in L_2(\A)$ such that $H\supset X$. 
There is then an epimorphism $\h(\A)\surj \gr(G(\A))$ that becomes an 
isomorphism upon tensoring with $\Q$ (due to the $1$-formality of the 
arrangement group). The arrangement $\A$ is said to be 
decomposable over $\k$ (where $\k$ is either $\Z$ or a field) if 
$\h_3(\A) \otimes \k$ decomposes as the direct sum 
$\bigoplus_{X\in L_2(\A)} \h_3(\A_X) \otimes \k$. 
It is shown in \cite{PS-cmh06} that once this condition is satisfied, 
a similar decomposition holds in all degrees $k\ge 2$.
Furthermore, the following is shown in \cite[Thm.~8.8]{PrS20}: 
If $\A$ is decomposable over $\Z$, then all the nilpotent quotients 
$G(\A)/\gamma_k(G(\A))$ are determined by $L_{\le 2}(\A)$. 
The same proof works if $\A$ is decomposable over $\Q$, 
with the nilpotent quotients replaced by their rationalizations. 

Let $B(\A)=G(\A)'/G(\A)''$ be the Alexander invariant of an arrangement $\A$, 
viewed as module over the group ring $\Z[G(\A)_{\ab}]$, and endowed with 
the filtration by the powers of the augmentation ideal. 
An in-depth study of the Alexander invariant and of the 
Milnor fibrations of a decomposable arrangement is 
done in \cite{Su-decomp}. We record in the next theorem 
one of the main results of this study.

\begin{theorem}[\cite{Su-decomp}]
\label{thm:decomp-mono}
Let $\A$ be an arrangement of rank $3$ or higher. Suppose 
$\A$ is decomposable over $\Q$ and $B(\A)\otimes \Q$ is 
separated in the $I$-adic topology. Then, for any choice of multiplicities 
$\bm\colon \A\to \N$, the algebraic monodromy of the Milnor fibration, 
$h_*\colon H_1(F_{\bm};\Q)\to H_1(F_{\bm};\Q)$, is trivial.
\end{theorem}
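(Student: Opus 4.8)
The plan is to reduce the statement to a computation with resonance varieties and then invoke the Tangent Cone theorem. Recall that for a multi-arrangement $(\A,\bm)$, the Milnor fiber $F_{\bm}$ is a regular $\Z_N$-cover of $U=\P(M)$ via $\sigma_{\bm}$, classified by the character $\chi_{\bm}\colon \pi_1(U)\surj \Z_N$, $\overline{\gamma}_H\mapsto m_H$. By formula \eqref{eq:betti-mf}, triviality of $h_*$ on $H_1(F_{\bm};\Q)$ is equivalent to $\depth_1(\rho_{\bm}^{N/k})=0$ for every divisor $k>1$ of $N$, where $\rho_{\bm}=j\circ\chi_{\bm}$; equivalently, none of the torsion characters $\rho_{\bm}^{N/k}$ lies in $\VV^1_1(U)$. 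So the first step would be to translate the desired conclusion into the assertion that the cyclic subgroup $\langle \rho_{\bm}\rangle\subset H^1(U;\C^*)$ meets $\VV^1_1(U)$ only at the identity, for every choice of multiplicities $\bm$.

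The second step uses the hypothesis that $\A$ is decomposable over $\Q$. By \cite{PS-cmh06}, decomposability forces the resonance variety $\RR^1_1(M)$ — equivalently, by Proposition \ref{prop:cjl-MU}(1), $\RR^1_1(U)$ — to consist only of the ``local'' components coming from the rank-$2$ flats: for each $X\in L_2(\A)$ with $\mu(X)\ge 2$ there is a coordinate subspace $P_X=\{x_H=0 : H\not\supset X\}\cap\{\sum_{H\supset X}x_H=0\}$, and $\RR^1_1(U)=\bigcup_X P_X$. (No essential, i.e. non-coordinate, component can occur, since by the Falk--Yuzvinsky description \cite{FY} such a component would come from a multinet on a sub-arrangement, forcing extra LCS generators in degree $3$ and violating \eqref{eq:phi3-min}.) By the Tangent Cone theorem for arrangements (valid by formality of $U$), the positive-dimensional components of $\VV^1_1(U)$ through $\bo$ are the subtori $\exp(P_X)$, and since $\A$ is decomposable the characteristic variety $\VV^1_1(U)$ has \emph{no} components other than these — here one would cite the fact (again from the decomposable case, cf.\ \cite{Su-decomp}) that translated or isolated components also cannot appear. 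So $\VV^1_1(U)=\bigcup_{X\in L_2(\A)}\exp(P_X)$, a union of coordinate subtori, each supported on the set $\{H : H\supset X\}$.

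The third step is the key computation: show that a character $\xi\in\langle\rho_{\bm}\rangle$ lying in some $\exp(P_X)$ must be trivial. Such a $\xi$ is a power $\rho_{\bm}^{N/k}$, which sends $\overline\gamma_H$ to $e^{2\pi\ii m_H/k}$; for it to lie in $\exp(P_X)$ it must send every meridian $\overline\gamma_H$ with $H\not\supset X$ to $1$, i.e.\ $k\mid m_H$ for all $H\not\supset X$, and additionally satisfy $\prod_{H\supset X}\xi(\overline\gamma_H)=1$, i.e.\ $k\mid \sum_{H\supset X} m_H = n_X$. But $k\mid m_H$ for $H\not\supset X$ together with $k\mid n_X$ and $k\mid N=\sum_{\text{all }H}m_H$ forces $k\mid m_H$ for \emph{all} $H$; combined with $\gcd(\bm)=1$ this gives $k=1$, so $\xi=\bo$. (Here it is essential that $\A$ is essential of rank $\ge 3$: there is always a hyperplane $H\not\supset X$ for every $X\in L_2(\A)$, and the one-variable slices needed to reduce $\pi_1$-computations to $\C^3$ do not alter $L_{\le 2}(\A)$.) Hence $\langle\rho_{\bm}\rangle\cap\VV^1_1(U)=\{\bo\}$, so $b_1(F_{\bm})=b_1(U)=n-1$ by \eqref{eq:betti-mf}, and $h_*=\id$ on $H_1(F_{\bm};\Q)$.

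The main obstacle I expect is pinning down precisely the claim used in Step 2 — that decomposability over $\Q$ rules out \emph{all} extra components of $\VV^1_1(U)$, including translated subtori and isolated torsion points, not merely the positive-dimensional components through the origin (which the Tangent Cone theorem plus \eqref{eq:phi3-min} already handle). The resolution should be exactly the content of the in-depth analysis of Alexander invariants of decomposable arrangements in \cite{Su-decomp}: there the full Alexander invariant of $G(\A)$ is computed and shown to decompose as a direct sum of the local pieces, which forces $\VV^1_1(M)$ — and hence $\VV^1_1(U)$ — to be exactly the union of the local subtori. Granting that structural result, the argument above is just bookkeeping with cyclotomic exponents, and the theorem follows.
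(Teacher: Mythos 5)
The paper does not actually contain a proof of this statement: it is cited as Theorem~\ref{thm:decomp-mono} from \cite{Su-decomp}, listed in the bibliography as ``in preparation,'' so there is no argument here to compare against, and your proposal must stand on its own. Step~1 is fine, and Step~2 correctly isolates the structural input one would need from \cite{Su-decomp}, namely that decomposability forces $\VV^1_1(U)$ to be exactly the union of the local subtori $T_X = \exp(P_X)$, $X \in L_2(\A)$ with $\mu(X)\ge 2$. The problem is the arithmetic in Step~3. You assert that $k\mid m_H$ for all $H\not\supset X$, together with $k\mid n_X$ and $k\mid N$, forces $k\mid m_H$ for \emph{all} $H$. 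That implication is false: $k\mid n_X = \sum_{H\supset X} m_H$ carries no information about the individual $m_H$ with $H\supset X$. For instance, if $X$ is a flat of multiplicity $3$ with $m_H=1$ on the three hyperplanes through $X$, then $n_X = 3$ is divisible by $3$ while no individual $m_H$ is. (Note also that once $k\mid m_H$ for $H\not\supset X$ and $k\mid N$ are given, the condition $k\mid n_X$ is automatic, so you have no extra leverage there.)

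This is not a repairable slip within the strategy you outline, because the claim that $\langle\rho_{\bm}\rangle$ meets $\bigcup_X T_X$ only at $\bo$ appears to be genuinely false. Take the near-pencil $\A=\{x,\,y,\,x{+}y,\,z\}$ in $\C^3$: it is essential of rank~$3$, and $G(\A)\cong F_2\times\Z^2$ gives $\phi_3(G(\A))=\phi_3(F_2)=2$, which equals the local contribution $\phi_3(F_{\mu(X)})$ from the unique triple point $X=H_1\cap H_2\cap H_3$ (so $\A$ is decomposable; the formula $\sum_X\binom{\mu(X)}{2}$ in \eqref{eq:phi3-min} does not match the $k=3$ lower bound in \eqref{eq:phis-min} and appears to be a typo for $\sum_X\phi_3(F_{\mu(X)})$). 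Put $\bm=(1,1,1,3)$, so $N=6$. Then $\rho_{\bm}^{2}$ sends $\overline\gamma_{H_i}\mapsto e^{2\pi\ii m_i/3}$, so $\rho_{\bm}^{2}(\overline\gamma_{H_4})=1$ and $\prod_{H\supset X}\rho_{\bm}^{2}(\overline\gamma_H)=e^{2\pi\ii}=1$, hence $\rho_{\bm}^{2}\in T_X\setminus\{\bo\}$; since $T_X\subset\VV^1_1(U)$, formula \eqref{eq:betti-mf} gives $b_1(F_{\bm})\ge 3+\varphi(3)=5>3=b_1(U)$, i.e.\ nontrivial algebraic monodromy. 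The same mechanism applies to the Falk arrangement $\A$ of \S\ref{sect:mf-falk} with $\bm=(3,1,1,1,3,3)$ and $k=3$. So either the statement recorded here is broader than what \cite{Su-decomp} actually proves (perhaps it concerns only the usual Milnor fiber, or carries an additional hypothesis on $\bm$), or there is a subtle correction to the depth computation that neither of us is seeing; before pressing on, you should track down the precise statement and argument in \cite{Su-decomp}.
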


A large supply of decomposable arrangements may be constructed by taking 
suitable sections of products of (central) arrangements in $\C^2$. For such an 
arrangement $\A$, the group $G(\A)$ is a finite direct product of finitely generated 
free groups (see \cite{CDP} for a detailed study of such arrangements). We shall 
encounter two concrete examples of arrangements from this class in Section \ref{sect:mf-falk}. 

In general, though, there are decomposable arrangements for which the arrangement 
group is much more complicated. For instance, let $\A$ be the arrangement in $\C^3$ 
defined by the polynomial $f=xyz(x+y)(x-z)(2z+y)$. It is readily checked that $\A$ 
is decomposable (over $\Z$). Nevertheless, the group $G(\A)$ does not 
even have a finite-dimensional classifying space $K(G(\A),1)$, see \cite[Rem.~12.4]{Su-imrn}.

\subsection{Multiplicity conditions}
\label{subsec:multiplicities}
If $F=F(\A)$ is the Milnor fiber of a central arrangement $\A$ in $\C^{d+1}$, $d>1$, 
there are various combinatorial conditions insuring that the algebraic monodromy 
$h_*\colon H_1(F;\k)\to H_1(F;\k)$ over $\k=\Z$ or $\k$ a field is the identity, such 
as the ones given in \cite{CDO03,Li02,Wi,WS,LX}. 

In \cite{Wi}, Williams gave a very nice combinatorial upper bound on the 
first Betti number of $F$ and a criterion for triviality of the algebraic monodromy 
over $\Z$, stated in the case when $\A$ is the complexification of a real arrangement. 
A partial generalization was obtained in \cite{WS}, and the result was recently proved 
by Liu and Xie \cite{LX} in full generality. We summarize these results, as follows. 

\begin{theorem}[\cite{Wi, WS, LX}]
\label{thm:b1f-bound}
Let $\A$ be a central arrangement of $n$ hyperplanes. For each 
hyperplane $H\in \A$, set
\[
s_H=\sum_{\substack{X\in L_2(\A) \\[1pt] X\subset H}} (q_X-2)(\gcd(q_X,n)-1), 
\]
where $q_X=\abs{\A_X}$. Then,
\begin{enumerate}[itemsep=2pt, topsep=-1pt]
\item \label{w1} 
$\dim_{\k} H_1(F;\k) \le n-1 + \min \big\{ s_H : H\in \A \big\}$, for all fields $\k$.

\item \label{w2} 
$\Delta_1(t)=(t-1)^{n-1}p(t)$, 
for some $p(t)\in \C[t]$ dividing the polynomials 
\[
\left(\frac{t^{\gcd(q_X,n)}-1}{t-1}\right)^{q_X-2}
\]
for all $X\in L_2(\A)$. 

\item \label{w3} 
If there is a hyperplane $H\in \A$ such that $\gcd(q_X,n)=1$ for all $2$-flats $X$ 
with $q_X>2$ (for instance, if $n$ is a prime), then $H_1(F;\Z)=\Z^{n-1}$.
\end{enumerate}
\end{theorem}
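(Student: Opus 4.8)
The plan is to prove the three parts by following the ``key input'' structure: Williams's upper bound (part~\eqref{w1}) via a Mayer--Vietoris / resonance estimate, then deduce the shape of the characteristic polynomial (part~\eqref{w2}) from Theorem~\ref{thm:b1f-bound}'s bound combined with the local divisibility imposed by the pencils attached to $2$-flats, and finally derive part~\eqref{w3} as a combined numerical consequence of \eqref{w1} and an integral refinement.

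First I would establish \eqref{w1}. By the discussion in \S\ref{subsec:tam}, for any field $\k$ the action $h_*$ on $H_1(F;\k)$ has invariant part identified with $H_1(U;\k)=\k^{n-1}$, and the ``extra'' part is controlled by the characteristic varieties $\VV^1_s(U,\k)$ via the analogue of \eqref{eq:betti1F}: $\dim_\k H_1(F;\k)= (n-1) + \sum_{s\ge 1}\bigl(\abs{\im(\chi^*)\cap \VV^1_s(U,\k)} - \delta_{s,?}\bigr)$, where $\chi$ is the diagonal character $\overline\gamma_H\mapsto 1$. The point of Williams's argument is a \emph{local} count: a nontrivial component of $\VV^1_1(U)$ containing a power of the diagonal character must come (by Falk--Yuzvinsky, \S\ref{subsec:multinets}) from a multinet, and each $2$-flat $X$ of multiplicity $q_X$ contributes (through the pencil on the localization $\A_X$, which is a pencil of $q_X$ lines) characters of order dividing $\gcd(q_X,n)$ with depth-jump of size $(q_X-2)$ along a $1$-dimensional subtorus translated $\gcd(q_X,n)-1$ times off the identity. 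Fixing the hyperplane $H$ minimizing $s_H$ and using the injectivity of restriction $H^1(M(\A);\k)\inj \prod_X H^1(M(\A_X);\k)$ in degree $1$, one bounds the total overcount by $\sum_{X\subset H}(q_X-2)(\gcd(q_X,n)-1)= s_H$. This is the step I expect to be the main obstacle: making the Mayer--Vietoris / stratified count rigorous over an \emph{arbitrary} field $\k$ (the characteristic-$p$ resonance of \cite{Su-bock} enters here), and ensuring the bound is exactly $s_H$ rather than a cruder sum over all flats.

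Next, \eqref{w2} refines this at the level of the characteristic polynomial over $\C$. By formula \eqref{eq:charpoly} applied to the $\Z_n$-cover $\sigma\colon F\to U$, we have $\Delta_1(t)=(t-1)^{b_1(U)}\prod_{1<k\mid n}\Phi_k(t)^{\depth_1(\rho^{n/k})}$ where $b_1(U)=n-1$ and $\rho$ is the diagonal character. So $p(t)=\prod_{1<k\mid n}\Phi_k(t)^{\depth_1(\rho^{n/k})}$, and it remains to bound each $\depth_1(\rho^{n/k})$. A character $\rho^{n/k}$ has order exactly $k$; if it lies in $\VV^1_s(U)$ it must, by the structure theory of \S\ref{subsec:cv-arr}--\S\ref{subsec:tt-subtori}, lie on a positive-dimensional component, hence on a torus $\psi^*(H^1(S;\C^*))$ coming from an orbifold pencil. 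The key divisibility constraint is that such a torus passing through a character of order $k$ forces $k\mid \gcd(q_X,n)$ for the relevant $2$-flat $X$ (the multiple fibers of the pencil restricted to $\A_X$ have multiplicity a divisor of $q_X$, and the character's order divides $n$), and the depth along it is at most $q_X-2$. Since the number of divisors $k$ contributing $\Phi_k$ to $\bigl(\tfrac{t^{\gcd(q_X,n)}-1}{t-1}\bigr)^{q_X-2}= \prod_{1<k\mid \gcd(q_X,n)}\Phi_k(t)^{q_X-2}$ matches exactly, summing the local contributions shows $p(t)$ divides each $\bigl(\tfrac{t^{\gcd(q_X,n)}-1}{t-1}\bigr)^{q_X-2}$.

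Finally, \eqref{w3} is immediate. If $H\in\A$ satisfies $\gcd(q_X,n)=1$ for every $2$-flat $X$ with $q_X>2$, then $s_H=\sum_{X\subset H}(q_X-2)(\gcd(q_X,n)-1)=0$, so part~\eqref{w1} gives $\dim_\k H_1(F;\k)=n-1$ for \emph{every} field $\k$. Over $\Q$ this says $b_1(F)=n-1$; combined with the fact (from \S\ref{subsec:OS} and the Wang sequence) that $H_1(F;\Z)$ has rank $n-1$ and the equality of $\dim_{\F_p}H_1(F;\F_p)=n-1$ for all primes $p$ forces $H_1(F;\Z)$ to be torsion-free by the universal coefficient theorem, we conclude $H_1(F;\Z)\cong\Z^{n-1}$. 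The parenthetical case ($n$ prime) follows since then $\gcd(q_X,n)\in\{1,n\}$, and $\gcd(q_X,n)=n$ would force $q_X=n$, i.e.\ all hyperplanes through $X$, so picking any $H$ not through such $X$ (or noting there is at most one such pencil) yields $s_H=0$.
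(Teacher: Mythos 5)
First, a structural point: the paper does \emph{not} prove Theorem~\ref{thm:b1f-bound}. It is stated as a summary of results cited from \cite{Wi, WS, LX}, so there is no ``paper's own proof'' to compare against. Those references establish the result via a direct analysis of the chain complex of the Milnor fiber (minimal CW structures, Fox calculus, a rank/filtration argument along a fixed hyperplane $H$), not via a jump-locus argument.

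Your proposed route for part \eqref{w1} has a gap you yourself flag as ``the main obstacle,'' and it is in fact fatal as written. The identity \eqref{eq:betti-cover}, and hence \eqref{eq:betti1F}, rests on the eigenspace decomposition of $H_1(F;\C)$ under the $\Z_n$-action, which requires $\C[\Z_n]$ (equivalently $\k[\Z_n]$ with $\ch\k\nmid n$) to be semisimple. There is no ``analogue over $\k$'' of this formula when $\ch\k\mid n$; your display even carries an unresolved placeholder $\delta_{s,?}$. Since the whole point of \eqref{w1} is that it holds for \emph{all} fields, the modular case cannot be dodged. Moreover, the ``local count'' heuristic is off: the local subtorus $\psi_X^*\big(H^1(S_X;\C^*)\big)$ attached to a $2$-flat $X$ consists of characters that are trivial on every $\bar\gamma_H$ with $H\not\supset X$, so it does \emph{not} contain the diagonal powers $\rho^{n/k}$ for $k>1$ once $\A\ne\A_X$; those powers must be caught indirectly, not by the local components themselves. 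For \eqref{w2}, the step ``a torsion character of order $k$ in $\VV^1_s(U)$ must lie on a positive-dimensional pencil component'' is false in general: isolated torsion points can and do occur (the paper explicitly notes that their combinatorial nature is open), so the depth bound cannot be read off from pencils alone. Part~\eqref{w3}'s core deduction (upper bound $n-1$ over all fields $+$ rank $n-1$ $+$ UCT $\Rightarrow$ $H_1(F;\Z)\cong\Z^{n-1}$) is correct \emph{granted} \eqref{w1}; but the parenthetical ``$n$ prime'' requires excluding the pencil case $q_X=n$, where you cannot ``pick $H$ not through $X$'' because every hyperplane contains $X$, and indeed the conclusion fails for a pencil of $n$ lines ($H_1(F;\Z)\cong\Z^{(n-1)^2}$). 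In short: the scaffolding is reasonable but the load-bearing step \eqref{w1} needs a genuinely different, chain-level argument, and \eqref{w2} cannot be extracted from the pencil picture alone.
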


\subsection{The double point graph}
\label{subsec:node-graph}
Let $\A$ be a central arrangement of planes in $\C^3$, and let $\bar{\A}=\P(\A)$ 
be the corresponding arrangement of projective lines in $\CP^2$. 
The {\em double point graph}\/ associated to $\A$ is the graph $\Gamma$ 
with vertex set $\A$ and with an edge joining two hyperplanes $H,K \in \A$ 
if $\bar{H} \cap \bar{K}$ is a double point (see \cite{Bt14, SS17}). 
The components of $\Gamma$ define a partition of $\A$ 
which is a refinement of all partitions induced by multinets on $\A$. 

Now suppose $\Gamma$ is connected. Using results from \cite{PS-plms17}, 
Bailet showed in \cite{Bt14} that the algebraic monodromy of the 
Milnor fibration, $h_*\colon H_1(F;\C)\to H_1(F;\C)$, is the 
identity map, provided $\abs{\A_X}\le 9$ for all $X\in L_2(\A)$ 
and  either $6\nmid \abs{\A}$, or there exists a hyperplane $H\in A$ 
such that $\abs{\A_X}\ne 6$, for all $X\subset H$. 
Under the same connectivity assumption on $\Gamma$, 
Salvetti and Serventi \cite{SS17} show that $\A$ admits no multinet. 
Furthermore, they show that $h_*=\id$ if $\Gamma$ 
admits a ``good" spanning tree, and conjecture that this holds for arbitrary connected graphs. 
In \cite{Ve} Venturelli establishes this conjecture under the assumption that 
$\bar{\A}$ has two multiple points, $P_1$ and $P_2$, such that every line in $\bar{\A}$ 
passes through either $P_1$ or $P_2$; in \cite{Su-decomp}, we give another proof 
of this result, in a more general setting. 

\section{The Falk arrangements}
\label{sect:mf-falk}

\subsection{A pair of arrangements and their complements}
\label{subsec:falk-arrs}
In this section, we analyze in detail a pair of hyperplane arrangements 
introduced by Falk in \cite{Fa93} and further studied in \cite{Su-revroum}.  
The two arrangements, $\A$ and $\hat\A$, are central arrangements of 
$6$ planes in $\C^3$, defined by the polynomials 
\begin{gather}
\label{eq:Falk-arrs}
\begin{aligned}
f&= z(x-y)y(x+y)(x-z)(x+z),\\
\hat{f}&=z(x+z)(x-z)(y+z)(y-z)(x-y+z).
\end{aligned}
\end{gather}

\begin{figure}
 \centering
 \begin{minipage}[t]{.4\textwidth}
  \centering
\begin{tikzpicture}[scale=0.75]
\draw[style=thick] (0,0) circle (3.2);
\draw[style=thick](-1,-2.1) -- (-1,2.5);
\draw[style=thick] (1,-2.1) -- (1,2.5);
\draw[style=thick] (-2.5,0) -- (2.5,0);
\draw[style=thick]  (-2,-2) -- (2,2);
\draw[style=thick](-2,2) -- (2,-2);
\node at (-3.5,0.5) {$\ell_0$};
\node at (2.1,1.55) {$\ell_1$};
\node at (2.85,0) {$\ell_2$};
\node at (2.1,-1.5) {$\ell_3$};
\node at (1,-2.5) {$\ell_4$};
\node at (-1,-2.5) {$\ell_5$};
\end{tikzpicture}
 \end{minipage}%
 \quad
  \begin{minipage}[t]{.4\textwidth}
   \centering
  \begin{tikzpicture}[scale=0.75]
\draw[style=thick] (0,0) circle (3.2);
\draw[style=thick](-1,-2.1) -- (-1,2.5);
\draw[style=thick] (1,-2.1) -- (1,2.5);
\draw[style=thick] (-2.5,1) -- (2.5,1);
\draw[style=thick] (-2.5,-1) -- (2.5,-1);
\draw[style=thick]  (-2.4,-1.5) -- (1.5,2.4);
\node at (-3.5,0.5) {$\hat\ell_0$};
\node at (2.1,1.45) {$\hat\ell_4$};
\node at (-0.15,0.15) {$\hat\ell_5$};
\node at (2.1,-0.55) {$\hat\ell_3$};
\node at (1,-2.5) {$\hat\ell_2$};
\node at (-1,-2.5) {$\hat\ell_1$};
\end{tikzpicture}
 \end{minipage}
\caption{The Falk arrangements $\A$ and $\hat\A$}
\label{fig:falk}
\end{figure}
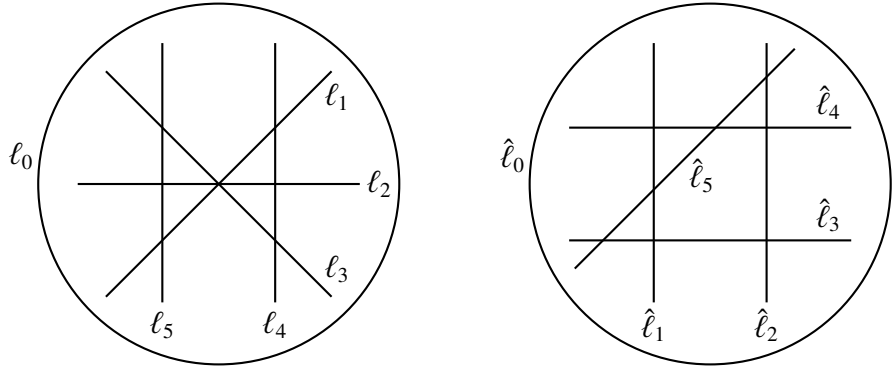

The projectivizations of $\A$ and $\hat\A$ are depicted in Figure \ref{fig:falk}; 
the numbering of the lines corresponds to the ordering of the linear factors in the 
respective defining polynomials. 
Both $\P(\A)$ and $\P(\hat\A)$ have $2$ triple points and $9$ double points, 
yet the two intersection lattices are non-isomorphic:  the two triple 
points of $\P(\A)$ do not lie on a common line, whereas the triple points 
of $\P(\hat{\A})$ lie on a common line (namely, $\hat\ell_0$). 
Nevertheless, as shown by Falk in \cite{Fa93}, the two projective 
complements, $U=\P(M)$ and $\hat{U}=\P(\hat{M})$, are homotopy equivalent. 
Let us note that $\P(\hat\A)$ has a line (namely, $\hat\ell_5$) in general position 
with the others. A well-known result of Oka and Sakamoto \cite{OkaS} then 
implies that $\pi_1(\hat{U})$ splits off a $\Z$ factor; 
it easily follows that both groups are isomorphic to $F_2\times F_2\times \Z$. 

The cohomology rings $A=H^*(U;\Z)$ and $\hat{A}=H^*(\hat{U};\Z)$ are the quotients 
of the exterior algebra $E=\bwedge(e_1,\dots,e_5)$ by the ideals 
$I=(\partial e_{123}, e_{45})$ and $\hat{I}=(e_{12}, e_{34})$, 
respectively. The automorphism $E\isom E$ given by 
$e_1\mapsto e_1-e_3$, $e_2\mapsto e_2-e_3$, 
$e_3\mapsto e_4$, $e_3\mapsto e_5$, and 
$e_5\mapsto e_1+e_2+e_3$ 
induces an isomorphism $\hat{A}\isom A$. 
It is readily verified that the only multinets supported on sub-arrangements 
of either $\A$ or $\hat{\A}$ are those coming from the triple points, and that 
the respective characteristic varieties are given by
\begin{gather}
\label{eq:Falk-cvs}
\begin{aligned}
\VV^1_1(U)&=\{t_1t_2t_3=t_4=t_5=1\} \cup \{  t_1=t_2=t_3=1\}, 
\\
\VV^1_1(\hat{U})&= \{  t_3=t_4=t_5=1\} \cup \{  t_1=t_2=t_5=1\}.
\end{aligned}
\end{gather}

\subsection{The Milnor fibers of the Falk arrangements}
\label{subsec:falk-mf}

Let $F=F(\A)$ and $\hat{F}=F(\hat\A)$ be the fibers of the Milnor fibrations 
$f\colon M\to \C^*$ and $\hat{f}\colon \hat{M}\to \C^*$. 
Since both $\P(\A)$ and $\P(\hat\A)$ have only double and triple points, 
and since neither of the two arrangements supports a $3$-net,  
Theorem \ref{thm:ps-triple} shows that the characteristic polynomial 
of the algebraic monodromy acting on either $H_1(F;\Q)$ or 
$H_1(\hat{F};\Q)$ is $(t-1)^5$. Alternatively, it is easily verified   
that both arrangements are decomposable (over $\Z$); therefore, 
Theorem \ref{thm:decomp-mono} shows once again that their  
algebraic monodromy is trivial in degree $1$. It now follows 
from Corollary \ref{cor:mf-formal} that both $F$ and $\hat{F}$ 
are formal spaces.  

Since $\hat\A$ contains a line meeting the other ones only in double points, 
Theorem \ref{thm:b1f-bound}, part \eqref{w3} implies that $H_1(\hat F;\Z)=\Z^5$. 
Direct computation shows that $H_1(F;\Z)=\Z^5$, too, and so the monodromy 
action on both these groups is trivial. 
Moreover, both Milnor fibers have Euler characteristic $6\cdot 4=24$, 
and thus $H_2(F;\Z)=H_2(\hat F;\Z)=\Z^{28}$.  Let $\zeta$ be a primitive $6$th 
root of unity, and let $\mathcal{H}_k$ be the $\zeta^k$-eigenspace of 
the monodromy action on $H_2(F;\C)$.  Then, by \cite{CS95}, 
we have that $\dim_{\C} \mathcal{H}_k =4$ for $1\le k\le 5$ and 
$\dim_{\C} \mathcal{H}_0 =8$. 

Let $K=\pi_1(F)$ and $\hat K=\pi_1(\hat F)$ be the fundamental groups 
of the two Milnor fibers and let $G=\pi_1(M)\cong\pi_1(\hat{M})$. 
Applying Theorem \ref{thm:trivial-mono-z}, we  
find that the associated graded Lie algebras, respectively, the Chen Lie algebras 
of all these groups are isomorphic in degrees $2$ and more:
\begin{gather}
\label{eq:grlie-mff}
\begin{aligned}
&\gr_{\ge 2} (K)\cong \gr_{\ge 2} (\hat K) \cong \gr_{\ge 2} (G),\\[1pt]
&\gr_{\ge 2} (K/K'')\cong \gr_{\ge 2} (\hat K/\hat K'')  \cong \gr_{\ge 2} (G/G'') .
\end{aligned}
\end{gather}
From the discussion in Section \ref{subsec:falk-arrs}, we have that $G\cong F_2^2\times \Z^2$. 
Therefore, all the LCS quotients and Chen groups of $K$ and $\hat K$ are torsion-free, 
with ranks in degrees $k\ge 2$ given by
\begin{gather}
\label{eq:Falk-lcs-chen}
\begin{aligned}
&\phi_k(K)= \phi_k (\hat K)= \tfrac{2}{k}\sum_{d\mid k} \mu(d) 2^{k/d}, 
\\
&\theta_k (K)=\theta_k (\hat K)  = 2(k-1) .
\end{aligned}
\end{gather}

Although all these homological and group-theoretic invariants of $F$ and $\hat F$ 
agree, the two Milnor fibers are {\em not}\/ homotopy equivalent, as the next result shows.

\begin{prop}
\label{prop:mf-falk}
Let $F$ and $\hat F$ be the Milnor fibers of the two Falk arrangements, and 
let $K$ and $\hat K$ be their fundamental groups. Then, 
\begin{enumerate}[itemsep=2.5pt, topsep=-1pt]
\item \label{mff1} 
$K/K'' \not\cong \hat K/\hat K''$.
\item \label{mff2} 
$K/\gamma_3(K)\not\cong \hat K/\gamma_3(\hat K)$.
\end{enumerate}
Consequently, $\pi_1(F)\not\cong \pi_1(\hat F)$.  
\end{prop}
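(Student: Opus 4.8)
The plan is to distinguish the two metabelian quotients $K/K''$ and $\hat K/\hat K''$ and the two second nilpotent quotients $K/\gamma_3(K)$ and $\hat K/\gamma_3(\hat K)$ by computing finer invariants than the ones (LCS ranks, Chen ranks, cohomology) that already agree. Since $\pi_1(F)\cong\pi_1(\hat F)$ would force both pairs of quotients to be isomorphic, establishing either \eqref{mff1} or \eqref{mff2} suffices for the final statement; I would prove both, as the paper advertises two different ``separating'' mechanisms (higher-depth characteristic varieties and Schur multipliers).

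For part \eqref{mff1}, the key observation is that $\VV^1_s(K)$ depends only on $K/K''$, so it is enough to exhibit a discrepancy in the degree-$1$, depth-$2$ characteristic varieties. First I would use Theorem \ref{thm:cjl-mf-trivialmono}\eqref{pb} together with Theorem \ref{thm:acm} to locate torsion points in $\WW^1_2(F)$ and $\WW^1_2(\hat F)$: the two local components of $\VV^1_1(U)$ recorded in \eqref{eq:Falk-cvs} come from the two triple points, and one asks whether they meet away from $\bo$. In the $\hat\A$ case the two triple points lie on the common line $\hat\ell_0$, so the corresponding subtori $\{t_3=t_4=t_5=1\}$ and $\{t_1=t_2=t_5=1\}$ intersect in the positive-dimensional locus $\{t_1=t_2=t_3=t_4=t_5=1\}$... wait, more carefully: they share the coordinate $t_5=1$, and their intersection is the $1$-subtorus given by all five coordinates constrained except one — this produces, after pulling back to $\hat F$ and translating by the relevant order-$6$ character, an extra component or extra torsion point in $\WW^1_2(\hat F)$ not present for $\A$, whose two triple points are in ``general'' position and whose subtori meet only at $\bo$. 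I would then invoke Corollary \ref{cor:mf-propagate}\eqref{cjm2} (propagation) and Theorem \ref{thm:acm} to convert the intersection data into an explicit difference between $\WW^1_2(F)$ and $\WW^1_2(\hat F)$, and conclude $K/K''\not\cong\hat K/\hat K''$.

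For part \eqref{mff2}, since $G/\gamma_3(G)$ is combinatorially determined by $L_{\le 2}(\A)$ (as reviewed in \S\ref{subsec:lcs-arr}) but $L_{\le 2}(\A)\not\cong L_{\le 2}(\hat\A)$, one expects $K/\gamma_3(K)$ and $\hat K/\gamma_3(\hat K)$ to differ even though their associated graded pieces agree in all degrees by \eqref{eq:grlie-mff}. The right invariant is the Schur multiplier $H_2(-;\Z)$ of the second nilpotent quotient, equivalently the isomorphism type of the central extension \eqref{eq:extension} with $k=2$ and of its $k$-invariant $\chi_2$. Concretely, $K/\gamma_3(K)$ is the quotient of the free $2$-step nilpotent group on five generators by the relations \eqref{eq:nilp2-arr} indexed by the flags $H\supset X$, $X\in L_2(\A)$; the incidence data of the two triple points (do they share a line or not) changes the rank and torsion of $H_2$ of this nilpotent group. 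I would compute $H_2(K/\gamma_3(K);\Z)$ and $H_2(\hat K/\gamma_3(\hat K);\Z)$ directly from the presentations — a finite linear-algebra computation over $\Z$ — and show they are non-isomorphic abelian groups.

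The main obstacle I anticipate is part \eqref{mff1}: transferring the intersection pattern of the two triple-point subtori from $\VV^1_1(U)$ up to $\WW^1_1(F)$ and then detecting depth $2$ requires care, because Theorem \ref{thm:cjl-mf-trivialmono}\eqref{pa} only gives a bijection of components through the identity in each fixed depth, and one must check that the translated-by-$\Z_N$ copies do not accidentally restore symmetry between the two cases; one has to pin down exactly which torsion character $\rho_{\bm}^{N/k}$ lands in which subtorus and verify via Theorem \ref{thm:acm} that a point of $\WW^1_2(\hat F)$ genuinely has no counterpart in $\WW^1_2(F)$. If that turns out to be delicate, part \eqref{mff2} via Schur multipliers is the more robust route and by itself already yields $\pi_1(F)\not\cong\pi_1(\hat F)$, so I would treat \eqref{mff2} as the load-bearing argument and \eqref{mff1} as the conceptually cleaner but technically fussier complement.
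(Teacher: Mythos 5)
Your overall architecture matches the paper's---distinguish $K/K''$ via depth-$2$ characteristic varieties and distinguish $K/\gamma_3(K)$ via Schur multipliers---but both sub-arguments contain genuine errors.

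For part~\eqref{mff1}, the geometric heuristic is wrong in two ways. First, the two local subtori of $\VV^1_1(\hat U)$ you quote, $\{t_3=t_4=t_5=1\}$ and $\{t_1=t_2=t_5=1\}$, intersect only in $\{\bo\}$, not in a $1$-subtorus; sharing the constraint $t_5=1$ makes the intersection \emph{smaller}, not larger. The same is true of the two tori in $\VV^1_1(U)$ for $\A$. Hence the phenomenon you are trying to detect is invisible at the level of $U$: in both cases $\VV^1_2(U)=\VV^1_2(\hat U)=\{\bo\}$. The extra torsion appears only after pushing through $\sigma^*_{\bm}$ (equivalently $\iota^*$), because $\sigma^*$ has a $\Z_N$ kernel and the images of the two tori can overlap in more than the origin. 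Second, you have the conclusion reversed: the paper finds $\VV^1_2(F)$ consists of \emph{three} points (the origin plus the two cube roots of unity in the $u_3$-coordinate), while $\VV^1_2(\hat F)=\{\bo\}$. So it is the arrangement $\A$---whose triple points do \emph{not} share a line---that picks up extra torsion in $\VV^1_2$, not $\hat\A$. The correct argument is not Theorem~\ref{thm:acm} applied to $U$, but a direct computation: write down the two tori in $\VV^1_1(M)\subset(\C^*)^6$, push them forward by the explicit monomial map $\iota^*$ of \eqref{eq:iota-star}, and intersect the images in $(\C^*)^5$. Then Theorem~\ref{thm:acm} is invoked inside the Milnor fiber to promote those intersection points to $\VV^1_2(F)$.

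For part~\eqref{mff2}, the computational plan (compute $H_2$ of a finitely generated $2$-step nilpotent group by linear algebra over $\Z$) matches the paper, which runs the LHS spectral sequence of the central extension $0\to\gr_2\to K/\gamma_3(K)\to K_{\ab}\to 0$. But your motivating reasoning has a gap. The relations \eqref{eq:nilp2-arr} present $G(\A)/\gamma_3(G(\A))$, not $K/\gamma_3(K)$; and since $U\simeq\hat U$ (Falk's theorem), the groups $G$ and $\hat G$---and hence $G/\gamma_3(G)$ and $\hat G/\gamma_3(\hat G)$, and their Schur multipliers, both $\Z^{12}$---are isomorphic despite $L_{\le 2}(\A)\not\cong L_{\le 2}(\hat\A)$. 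So ``the lattices differ, therefore the nilpotent quotients differ'' is not sound. What actually breaks the symmetry is that the cocycle classifying the extension for $K/\gamma_3(K)$ is obtained from the Orlik--Solomon cocycle for $\pi_1(U)$ by composing with $\sigma^*$, and $\sigma^*$ distorts the integer coefficients (sending the row $(1,-1,1)$ of \eqref{eq:chi-2} to $(3,-2,1)$, etc.). It is precisely this distortion that puts a $\Z_3$ into $\coker(d^2_{3,0})$ on the $\A$ side and not on the $\hat\A$ side, giving $H_2(K/\gamma_3(K);\Z)\cong\Z^{12}\oplus\Z_3$ versus $H_2(\hat K/\gamma_3(\hat K);\Z)\cong\Z^{12}$. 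Your proposal, as written, computes the wrong group and would find the two multipliers equal.
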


A proof of this proposition will be given in the next two subsections. 

\subsection{The characteristic varieties of $F$ and $\hat F$}
\label{subsec:falk-mf-cv}
The (degree $1$) characteristic varieties of the Milnor fibers of the 
two Falk arrangements were first computed in \cite{Su-revroum}. 
Since that computation was based on a machine calculation, we 
redo it here by hand, using a method which works 
for any arrangement with trivial algebraic monodromy in degree $1$.

We start with the Milnor fiber $F=F(\A)$. As remarked above, 
$H_1(F;\Z)=\Z^5$. The inclusion map $\iota\colon F\to M$ induces 
a morphism $\iota^*\colon H^1(M;\C^*)\to H^1(F;\C^*)$ on character 
tori, given in coordinates by 
\begin{equation}
\label{eq:iota-star}
\iota^*(z_1,\dots , z_6) = (z_1/z_2,z_2/z_3,z_3/z_4,z_4/z_5,z_5/z_6). 
\end{equation}
It follows from Theorem \ref{thm:cjl-mf-trivialmono}, part \eqref{pb}, 
that the characteristic variety $\VV^1_1(F)\subset H^1(F;\C^*)$ is the 
image under the map $\iota^*$ of $\VV^1_1(M)\subset H^1(M;\C^*)$. Therefore,
\begin{align*}
\VV^1_1(F)&=\iota^*\big(\big\{\big( z_1,z_2, \tfrac{1}{z_1z_2},1,1,1\big) \mid z_1,z_2\in \C^*\big\}\big) \cup 
\iota^*\big(\big\{(1,1,1, z_4,z_5, \tfrac{1}{z_4z_5}) \mid z_4,z_5\in \C^*\big\}\big)
\\
&= \big\{\big( \tfrac{z_1}{z_2},z_1z^2_2, \tfrac{1}{z_1z_2},1,1\big) \mid z_1,z_2\in \C^*\big\} \cup 
\big\{\big(1,1, \tfrac{1}{z_4}, \tfrac{z_4}{z_5},z_4z_5^2\big) \mid z_4,z_5\in \C^*\big\}, 
\end{align*}
and so $\VV^1_1(F)\subset (\C^*)^5$ is the union of two $2$-dimensional subtori, 
$T_1=\{ u \in (\C^*)^5 \mid  u_1u_2^2u_3^3 = u_4=u_5=1\}$ and  
$T_2=\{ u \in (\C^*)^5 \mid   u_1 = u_2 =u_3^3 u_4^2 u_5 =1 \}$. 
Notice that 
\begin{equation}
\label{eq:v12-f}
T_1\cap T_2 = \{\bo , (1,1,\omega, 1, 1), (1,1,\omega^{2}, 1, 1)\},
\end{equation}
where $\omega=\exp(2\pi \ii /3)$. By Theorem \ref{thm:acm}, the torsion 
characters comprising $T_1\cap T_2$ lie in $\VV^1_2(F)$. In fact, direct 
computation reveals that $\VV^1_2(F)=T_1\cap T_2$. 

Proceeding in the same manner with the Milnor fiber of the second Falk arrangement, 
$\hat F=F(\hat\A)$, we obtain: 
\begin{align*}
\VV^1_1(\hat F)&=\iota^*\big(\big\{\big( z_1,z_2,1,1,1, \tfrac{1}{z_1z_2}\big) \mid z_1,z_2\in \C^*\big\}\big) \cup 
 \iota^*\big(\big\{(1,1, z_3,z_4,1, \tfrac{1}{z_3z_4}) \mid z_3,z_4\in \C^*\big\}\big)
\\
&= \big\{\big( \tfrac{z_1}{z_2},z_2,1,1,z_1z_2\big) \mid z_1,z_2\in \C^*\big\} \cup 
\big\{\big(1, \tfrac{1}{z_3}, \tfrac{z_3}{z_4},z_4,z_3z_4\big) \mid z_3,z_4\in \C^*\big\},
\end{align*}
and so $\VV^1_1(\hat F)=\hat T_1\cup \hat T_2$, where 
$\hat T_1=\{ u \in (\C^*)^5 \mid  u_1 u_2^2 u_5^{-1} = u_3 =u_4=1 \}$ and 
$\hat T_2=\{ u \in (\C^*)^5 \mid  u_1=u_2 u_3 u_4 = u_2 u_4^{-1}u_5 = 1\}$. 
Notice that these two subtori intersect only at the origin; in fact, 
direct computation shows that 
\begin{equation}
\label{eq:v12-f-prime}
\VV^1_2(\hat F)=\hat T_1\cap \hat T_2 =\{\bo\}.
\end{equation}

The above computations show that $\VV^1_2(F)\not\cong \VV^1_2(\hat F)$:  
the first variety consists of $3$ points, while the second consists of a single point. 
Finally, recall from Section \ref{subsec:cv} that the characteristic varieties $\VV^1_s(G)$ of a 
(finitely generated) group $G$ depend only on its maximal metabelian quotient, 
$G/G''$. Therefore, we have shown that $K/K'' \not\cong \hat K/{\hat K}''$, 
thereby completing the proof of part \eqref{mff1} of Proposition \ref{prop:mf-falk}.\hfill\qed

\begin{remark}
\label{rem:res-mff}
Since both Milnor fibers are formal, the tangent cones to their first 
characteristic varieties coincide with the first resonance varieties. 
Using either this observation, together with the computations 
from above, or Theorem \ref{thm:cjl-mf-trivialmono}, 
part \eqref{res-f-bis}, we find that
\begin{align*}
\RR^1_1(F)&=\{x_1+2x_2+3x_3=x_4=x_5=0 \} \cup 
 \{x_1= x_2=3x_3+2x_4+x_5=0\},
\\
\RR^1_1(\hat F)&=\{x_1+2x_2-x_5=x_3=x_4=0 \}\cup 
\{x_1=x_2+x_3+x_4=x_2-x_4+x_5=0 \},
\end{align*}
while $\RR^1_2(F)= \RR^1_2(\hat F) =\{\bz\}$.  Thus, the resonance varieties 
do not distinguish between $\pi_1(F)$ and $\pi_1(\hat F)$.
\end{remark}
 
\subsection{The second nilpotent quotients of $K$ and $\hat K$}
\label{subsec:falk-mf-nilp}
We now give a proof of Proposition \ref{prop:mf-falk}, part \eqref{mff2}. 
First consider the projectivized complement $U=U(\A)$ and its fundamental group, 
$\overline{G}=\pi_1(U)$. Recall that $H^*(U;\Z)=E^*/I^*$, where 
$E=\bwedge (e_1,\dots, e_5)$ and $I=(\partial e_{123},e_{45})$. 
Writing $E_r=(E^r)^{\vee}$ and $I_r=(I^r)^{\vee}$ for the $\Z$-dual groups, 
the second nilpotent quotient $\overline{G}/\gamma_3(\overline{G})$ is the central 
extension of $\gr_1(\overline{G})=E_1\cong \Z^5$ by $\gr_2(\overline{G})=I_2\cong \Z^2$ 
classified by the cocycle $\chi_2\colon E_2 \surj I_2$ given by the matrix 
\begin{equation}
\label{eq:chi-2}
\chi^{\intercal}_2=
 \bordermatrix{
&{\scriptstyle 12} & {\scriptstyle 13} & {\scriptstyle 23} 
&{\scriptstyle 14} & {\scriptstyle 24} & {\scriptstyle 34} 
&{\scriptstyle 15} & {\scriptstyle 25} & {\scriptstyle 35} & {\scriptstyle 45} 
\cr
&1&-1&1&0&0&0&0&0&0&0
\cr
&0&0&0&0&0&0&0&0&0&1
}.
\end{equation} 

To compute the Schur multiplier $H_2(\overline{G}/\gamma_3(\overline{G});\Z)$, we use 
an approach similar to the one used in the proof of \cite[Thm.~4.1]{PrS20}. 
Consider the homology spectral sequence of the central extension 
$0\to I_2 \to \overline{G}/\gamma_3(\overline{G})  \xrightarrow{\ab} E_1 \to 0$, 
\begin{equation}
\label{eq:lhs-ss}
E^2_{p,q}=H_p( E_1 ;H_q( I_2 ;\Z)) \Rightarrow 
H_{p+q} ( \overline{G}/\gamma_3(\overline{G});\Z ).
\end{equation}
Since the $(E_2,d^2)$ page of the cohomology spectral sequence 
is a CDGA, and since its $\Z$-dual is $(E^2,d_2)$---due to lack of torsion on 
either of these two pages---the differentials $d_2\colon E^2_{p,q}\to E^2_{p-2,q+1}$ 
in diagram \eqref{eq:serre-ss} are determined by the map $d^2_{2,0}=\chi_2$.
\begin{equation}
\label{eq:serre-ss}
\begin{gathered}
\begin{sseqpage}[ homological Serre grading, 
xscale = 1.8, yscale = 1.5 ,  classes = { draw = none } ]
\class["\Z"](0,0)
\class["\Z^2"](0,1)
\class["\Z"](0,2)
\class["\Z^5"](1,0)
\class["\Z^{10}"](2,0)
\class["\Z^{10}"](3,0)
\class["\Z^{10}"](1,1)
\class["\Z^{20}"](2,1)
\d [->>, "d^{\,2}_{2,0}" { pos = 0.75,  xshift = -2pt, yshift = -25pt} ] 2 (2,0)
\d ["d^{\,2}_{3,0}" { pos = 0.5, xshift = 6pt, yshift = -8pt } ] 2 (3,0)
\d [->>, "d^{\,2}_{2,1}" ]  2 (2,1)
\end{sseqpage}
\end{gathered}
\end{equation}

Clearly, $E^3_{2,0}=\ker(d^2_{2,0})=\Z^8$.  The differential $d^2_{3,0}$ is 
dual to the composite $E^1\otimes I^2\inj E^1\otimes E^2\surj E^3$, 
whose kernel is generated by the elements 
$u_1= (e_1-e_2)\otimes  \partial e_{123}$, $u_2=(e_2-e_3)  \otimes \partial e_{123}$, 
$u_3=e_4 \otimes  e_{45}$,  and $u_4=e_5 \otimes e_{45}$. Taking transposes, we 
see that $E^3_{1,1}=\coker(d^2_{3,0})$ is equal to $\Z^4$, generated by the duals 
$u_i^{\vee}$ of those elements (written in terms of the duals $\varepsilon_i=e_i^{\vee}$). 
Finally, note that the map $d^2_{2,1}\colon E_2\otimes I_2 \to I_2\wedge I_2$ 
is surjective, since it sends $\partial \varepsilon_{123} \otimes \varepsilon_{45}$ 
to the generator $\partial \varepsilon_{123} \wedge \varepsilon_{45}$  
of $I_2\wedge I_2=\Z$; hence, $E^3_{0,2}=0$. 
Looking at the domains and ranges of the higher-order differentials 
in the spectral sequence, we see that $E^3_{p,q} = E^\infty_{p,q}$ 
for $p+q \le 2$.   Therefore,
\begin{equation}
\label{eq:h2-gg3}
 H_2(\overline{G}/\gamma_3(\overline{G});\Z)=
 E^3_{2,0}\oplus E^3_{1,1} =\Z^8\oplus \Z^{4} =\Z^{12}.
 \end{equation}

Consider next the Milnor fiber $F=F(\A)$. The inclusion map $\iota \colon F(\A)\inj M(\A)$ 
induces a monomorphism $\iota_*\colon H_1(F;\Z)\inj H_1(M;\Z)$ given in suitable bases by 
$\alpha_i\mapsto x_i-x_{i+1}$ for $1\le i\le 5$. Letting $\alpha_i=a_i^{\vee}$, the ring morphism 
$\sigma^*\colon H^*(U;\Z)\to H^*(F;\Z)$ is given in degree $1$  by 
$e_1 \mapsto a_1 + a_5$, 
$e_2 \mapsto -a_1 + a_2 + a_5$, 
$e_3 \mapsto -a_2 +a_3 + a_5$, 
$e_4 \mapsto -a_3 +a_4 + a_5$, 
$e_5 \mapsto -a_4 + 2 a_5$. 
It follows that the group $J^2\coloneqq \sigma^*(I^2)$ is free abelian, with basis 
$\sigma^*(\partial e_{123})=3a_{12}-2a_{13}+a_{23}$ 
and $\sigma^*(e_{45})=a_{34}-2a_{35}+3a_{45}$.

The second nilpotent quotient of the group $K=\pi_1(F)$ fits into the central 
extension $0\to J_2 \to K/\gamma_3(K) \xrightarrow{\ab} H \to 0$, where 
$H=K_{\ab}\cong \Z^5$ and $J_2=(J^2)^{\vee}\cong \Z^2$. Furthermore, 
the extension is classified by the cocycle $\chi_2\colon \bwedge^2 H \surj J_2$ 
given by the matrix
\begin{equation}
\label{eq:chi-2-f}
\chi^{\intercal}_2=
 \bordermatrix{
&{\scriptstyle 12} & {\scriptstyle 13} & {\scriptstyle 23} 
&{\scriptstyle 14} & {\scriptstyle 24} & {\scriptstyle 34} 
&{\scriptstyle 15} & {\scriptstyle 25} & {\scriptstyle 35} & {\scriptstyle 45} 
\cr
&3&-2&1&0&0&0&0&0&0&0
\cr
&0&0&0&0&0&1&0&0&-2&3
}.
\end{equation} 

The spectral sequence of the extension has the same entries in the $E^2$ page as in 
display \eqref{eq:serre-ss}. The differentials $d^2_{2,0}$ and $d^2_{2,1}$ are still 
surjective, giving $E^3_{2,0}=\Z^8$ and $E^3_{0,2}=0$. The difference, though, 
lies with the differential $d^2_{3,0}$: the elements $\sigma^*(u_i)^{\vee}$ are still in 
$\coker(d^2_{3,0})$, generating a $\Z^4$-summand, but now there  is an extra 
element of order $3$ in  that cokernel,  namely, 
$a_4\otimes (3a_{12}-2a_{13}+a_{23})$. 
Therefore, $E^3_{1,1}=\Z^4\oplus \Z_3$. 
Proceeding as before, we find that $H_2(K/\gamma_3(K);\Z)=  \Z^{12}\oplus \Z_3$. 

For the group $\hat K=\pi_1(\hat F)$, an entirely similar computation shows that 
$\coker (d^2_{3,0})=\Z^4$, and hence 
$H_2(\hat K/\gamma_3(\hat K);\Z)=\Z^{12}$.
Therefore, $K/\gamma_3(K)\not\cong \hat K/\gamma_3(\hat K)$, 
thereby completing the proof of Proposition \ref{prop:mf-falk}, part \eqref{mff2}.\hfill\qed

\section{The $\operatorname{B}_3$ arrangement and its deletion}
\label{sect:delB3}

\subsection{The $\operatorname{B}_3$ arrangement}
\label{subsec:b3}
Let $\A$ be the rank-$3$ reflection arrangement of type $\operatorname{B}_3$, 
defined by the polynomial 
\begin{equation}
\label{eq:poly-b3}
f=xyz(x-y)(x+y)(x-z)(x+z)(y-z)(y+z).
\end{equation}
Figure \ref{fig:b3 arr} shows a plane section of $\A$. 
The $\operatorname{B}_3$ arrangement is of fiber-type, 
with exponents $\{1,3,5\}$. 
Thus, the complement $M=M(\A)$ is aspherical and its projectivization, 
$U=\P(M)$, has fundamental group which decomposes as a semidirect 
product of free groups, $\pi_1(U)=F_5\rtimes_{\alpha} F_3$. The braid 
monodromy algorithm from \cite{CS97} shows that the monodromy map 
$\alpha\colon F_3\to \Aut(F_5)$ takes values in the pure braid group $P_5$, 
viewed as a subgroup of $\Aut(F_5)$ via the Artin representation. Denoting 
by $u_i$ the generators of $F_3$ and by 
$A_{ij}$ the standard generators of the pure braid group (corresponding 
to the meridians around the hyperplanes $H_{ij}$ of the braid arrangement), 
the monodromy map is given by 
\begin{equation}
\label{eq:bmono-b3}
\alpha(u_1)=A_{23}A_{24}A_{34}, \: \  
\alpha(u_2)=A_{14}^{A_{24}A_{34}}A_{25},\: \ 
\alpha(u_3)=A_{35}^{A_{23}A_{25}},
\end{equation}
where $a^b=b^{-1}ab$, see \cite[Ex.~10.8]{Su02}. Since pure braid automorphisms 
act trivially in homology, the extension $1\to F_5\to \pi_1(U) \to F_3\to 1$ is $\ab$-exact. 
Thus, by the aforementioned result of Falk and Randell \cite{FR}, the LCS quotients  
$\gr_k(\pi_1(U))$ are isomorphic to $\gr_k(F_5)\oplus \gr_k(F_3)$, 
for all $k\ge 1$. Moreover, the Chen ranks are given by 
$\theta_k(\pi_1(U))=(k-1) (3k+19)$ for $k\ge 4$, see \cite{Su01, CSc-adv}, 

\begin{figure}
 \centering
 \quad
  \captionbox{The $\operatorname{B}_3$ reflection arrangement, 
   with $(3,4)$-multinet\label{fig:b3 arr}}
    [.4\textwidth]{
\begin{tikzpicture}[scale=0.9]
\draw[style=thick,color=dkgreen] (0,0) circle (3.1);
\node at (-2.4,0.3) {$z^2$};
\node at (0,-2.55) {$x^2$};
\node at (3.4,0.5) {$y^2$};
\draw[style=thick,densely dashed,color=blue] (-1,-2.1) -- (-1,2.5);
\draw[style=thick,densely dotted,color=red] (0,-2.2) -- (0,2.5);
\draw[style=thick,densely dashed,color=blue] (1,-2.1) -- (1,2.5);
\draw[style=thick,densely dotted,color=red] (-2.5,-1) -- (2.5,-1);
\draw[style=thick,densely dashed,color=blue] (-2.5,0) -- (2.5,0);
\draw[style=thick,densely dotted,color=red] (-2.5,1) -- (2.5,1);
\draw[style=thick,color=dkgreen]  (-2,-2) -- (2,2);
\draw[style=thick,color=dkgreen](-2,2) -- (2,-2);
\node at (-2.1,-0.75) {$y+z$};
\node at (-2.1,1.25) {$y-z$};
\node at (-1,-2.35) {$x-y$};
\node at (1,-2.35) {$x+y$};
\node at (-2,-1.4) {$x-z$};
\node at (2.1,-1.4) {$x+z$};
\end{tikzpicture}
}
\:\qquad
  \captionbox{The deleted $\operatorname{B}_3$ arrangement, with 
  multiplicities \label{fig:delb3}}[.41\textwidth]{%
\begin{tikzpicture}[scale=0.9]
\draw (0,0) circle (3.1);
\node at (0,-2.7) {2};
\node at (3.35,0.5) {1};
\clip (0,0) circle (2.9);
\draw (-1,-2.1) -- (-1,2.5);
\draw (0,-2.2) -- (0,2.5);
\draw (1,-2.1) -- (1,2.5);
\draw (-2.5,-1) -- (2.5,-1);
\draw (-2.5,1) -- (2.5,1);
\draw (-2,-2) -- (2,2);
\draw (-2,2) -- (2,-2);
\node at (-2.25,-0.7) {1};
\node at (-2.25,1.3) {1};
\node at (-1,-2.5) {2};
\node at (1,-2.5) {2};
\node at (-2,-1.5) {3};
\node at (2,-1.5) {3};
\end{tikzpicture}
}
\end{figure}
 
We now turn to the cohomology jump loci of the  $\operatorname{B}_3$ 
arrangement (see \cite[Rem.~6.4]{CS99} and \cite[Ex.~3.6]{FY}). 
Notably, $\A$ supports a (non-reduced) multinet $\mathcal{N}$, depicted 
in Figure \ref{fig:b3 arr}; ordering the hyperplanes as the factors of the 
defining polynomial \eqref{eq:poly-b3}, this multinet has associated partition 
$(189|267|345)$. 
The resonance variety $\RR^1_1(M)\subset H^1(M;\C)=\C^9$
has $7$ local components, corresponding to the $4$ triple points and $3$ 
quadruple points, $11$ components corresponding to braid sub-arrangements, 
and one essential, $2$-dimensional component, $P=P_{\mathcal{N}}$. 
All the components of the characteristic variety 
$\VV^1_1(M)\subset H^1(M;\C^*) = (\C^*)^9$ pass through the origin, 
and thus are obtained by exponentiating the linear subspaces comprising 
$\RR^1_1(M)$. In particular, there is a single essential component, $T=\exp(P)$. 
More explicitly, the multinet $\mathcal{N}$ determines a pencil, 
\begin{equation}
\label{eq:b3-pencil}
\begin{tikzcd}[column sep=20pt]
\psi\colon M \ar[r]& S=\CP^1 \mysetminus \{\pcoor{0:1},\pcoor{1:0},\pcoor{1:1}\},
\end{tikzcd}
\end{equation}
which is given by $\psi(x,y,z)=\pcoor{x^2(y^2-z^2):y^2(x^2-z^2)}$. In turn, 
the induced homomorphism $\psi^*\colon H^1(S;\Z)\to H^1(M;\Z)$ is given by  
$c^{\vee}_1\mapsto 2e_1+e_8+e_9$, $c^{\vee}_2\mapsto 2e_2+e_6+e_7$, 
$c^{\vee}_3\mapsto 2e_3+e_4+e_5$, where $c_i=[\gamma_i]$ are the homology 
classes of standard loops around the punctures of $S$ (see Section \ref{subsec:multinets}). 
Hence, 
\begin{equation}
\label{eq:b3-essential}
T=\psi^*(H^1(S;\C^*))=\{ (t^2, s^2 , (st)^{-2}, s, s, (st)^{-1},  (st)^{-1}, t, t) : 
s,t\in \C^*\}.
\end{equation}

Finally, let $F=F(\A)$ be the Milnor fiber of the $\operatorname{B}_3$ 
arrangement; then none of the aforementioned components of 
$\VV^1_1(M)$ contributes to a jump in $b_1(F)$. In fact, as 
first shown in \cite{CS95}, the monodromy $h\colon F\to F$ acts trivially on 
$H_1(F;\Q)$; analyzing more carefully that computation shows that  
$h$ acts trivially on $H_1(F;\Z)$. Applying Theorem \ref{thm:trivial-mono-z}, 
we conclude that $\gr_k(\pi_1(F))\cong \gr_k(F_5)\oplus \gr_k(F_3)$, 
and $\theta_k(\pi_1(F))=\theta_k(\pi_1(U))$ for all $k\ge 1$.

\subsection{The deleted $\operatorname{B}_3$ arrangement}
\label{subsec:del-b3}
Consider now the arrangement $\A'$ obtained from $\A$ by 
deleting the hyperplane $\{z=0\}$, as shown in Figure \ref{fig:delb3}. 
This is the {\em deleted $\operatorname{B}_3$ arrangement}, 
defined by the polynomial 
\begin{equation}
\label{eq:delb3-poly}
f'=xy(x-y)(x+y)(x-z)(x+z)(y-z)(y+z).
\end{equation}
This is again a fiber-type arrangement, with exponents $\{1,3,4\}$. 
Thus, the complement $M'=M(\A)$ is aspherical and its projectivization, 
$U'=\P(M')$, has fundamental group $\pi_1(U')=F_4\rtimes_{\alpha'} F_3$, 
where, as noted in \cite[Ex.~10.6]{Su01}, the monodromy automorphism 
$\alpha'$ is given by the pure braids $A_{23}$, $A_{13}^{A_{23}}A_{24}$, and 
$A_{14}^{A_{24}}$.

The cohomology jump loci of $M'$ were computed in \cite{Su02}.  
Briefly, the resonance variety $\RR^1_1(M')\subset H^1(M';\C)=\C^8$ 
contains $7$ local components, corresponding to the $6$ triple points and $1$ 
quadruple point, and $5$ non-local components, corresponding 
to braid sub-arrangements.  In addition to the $12$ subtori obtained 
by exponentiating these linear subspaces, the 
characteristic variety $\VV^1_1(M')\subset 
H^1(M';\C^*)=(\C^{*})^8$ also contains a component of the form 
$\rho\cdot T'$, where $T'$ is a $1$-dimensional algebraic subtorus 
and $\rho$ is a root of unity of order $2$, given by 
\begin{gather}
\label{eq:delb3-torus}
\begin{aligned}
T'&=\set{(t^2,t^{-2},t^{-1},t^{-1},1,1,t,t) : t\in \C^{*}}, \\
\rho&=(1,1,-1,-1,-1,-1,1,1).
\end{aligned}
\end{gather}

As explained in \cite[Ex.~5.7]{DeS-plms}, this translated subtorus arises
from the pencil $\psi$ from \eqref{eq:b3-pencil}, as follows.
The point $\pcoor{0 : 1}$ is not in the image of $\psi$; however, 
extending the domain of $\psi$ to $M'=M\cup\{z=0\}$ defines a map 
\begin{equation}
\label{eq:delb3-pencil}
\begin{tikzcd}[column sep=20pt]
\psi' \colon M'\ar[r] & \C^*= \CP^1 \mysetminus \{\pcoor{0:1},\pcoor{1:0}\}.
\end{tikzcd}
\end{equation}

Note that $\psi'(x,y,0)=\pcoor{x^2y^2 : x^2y^2}$, 
so the fiber over $\pcoor{1 : 1}$ has multiplicity $2$. Therefore, we may view the map 
$\psi' \colon M'\to (\C^*,(2))$ as an orbifold pencil, with one multiple fiber of multiplicity $2$.
The orbifold fundamental group $\Gamma=\pi_1^{\orb}(\C^*, (2))$ may be identified with the 
free product $\Z*\Z_2$, while the character group $H^1(\Gamma;\C^*)$ may be identified with 
$\C^*\times \{\pm 1\}$. It follows from \eqref{eq:v1piorb} that 
$\V^1_1(\Gamma) = \C^*\times \{-1\}$. The map $\psi'$ induces an epimorphism 
$\psi'_{\sharp}\colon \pi_1(M')\surj \Gamma$, which in turn induces a monomorphism 
$(\psi'_{\sharp})^*\colon H^1(\Gamma;\C^*) \inj H^1(\pi_1(M');\C^*)$. The 
image of $\V^1_1(\Gamma)$ under this morphism is precisely the translated 
torus $\rho T'\subset \V^1_1(M')$.  Moreover, if we let $j\colon M\inj M'$ 
be the inclusion map between the respective complements, 
then the induced homomorphism, $j^*\colon H^1(M';\C^*)\inj H^1(M;\C^*)$, 
embeds $\rho T'$ into the torus $T$ from \eqref{eq:b3-essential}. In fact, 
 $T \cap \{t\in (\C^*)^9 : t_3=1\}=T' \cup \rho T'$.

\subsection{Milnor fibrations of the deleted $\operatorname{B}_3$ arrangement}
\label{subsec:mf-del-b3}

It follows from the above discussion that the deleted $\operatorname{B}_3$ 
arrangement $\A'$ supports no essential, reduced multinet. It is readily verified 
that none of aforementioned components of $\VV^1_1(M')$ contributes to a 
jump in the first Betti number of $F'=F(\A')$. Direct computation shows that, 
in fact, $H_1(F';\Z)=\Z^8$, and so the monodromy acts trivially on $H_1(F';\Z)$.  
For suitable choices of multiplicities, though, the Milnor fiber of the multi-arrangement 
acquires non-trivial $2$-torsion. We treat in detail one such choice. 

Let $F'_{\bm}=F_{\bm}(\A')$ be the Milnor fiber of the multi-arrangement 
$(\A',\bm)$ with multiplicity vector $\bm=(2,1,2,2,3,3,1,1)$. 
As noted in \cite{CDS, DeS-plms}, the monodromy of the Milnor 
fibration acts trivially on $H_1(F'_{\bm};\Q)$, but not on $H_1(F'_{\bm};\Z)$, 
which has torsion subgroup $\Z_2\oplus \Z_2$ on which the monodromy 
acts as $\bigl(\begin{smallmatrix}0&1\\1&1\end{smallmatrix}\bigr)$. 

Let $U'=U(\A')$, and consider the pullback square on the right side 
of the following diagram 
\begin{equation}
\label{eq:3-fold-delb3}
\begin{tikzcd}[
        /tikz/row 1/.append style={row sep=10pt}, 
        /tikz/row 2/.append style={row sep=24pt}, 
        /tikz/column 1/.append style={column sep=28pt}, 
        /tikz/column 2/.append style={column sep=32pt}, 
        /tikz/column 3/.append style={column sep=0pt}, 
        /tikz/column 4/.append style={anchor=base west, column sep=0pt, inner xsep =0pt}]
F'_{\bm} \ar[dr, "\kappa"] \ar[ddr, "\sigma_{\bm}" '] 
\\
& \hat{U}'\ar[d, "\tau"]  \ar[r, "\hat{\psi}'"] & \hat{S} \ar[d, "\nu"] &=(\C^*,(2,2,2)) 
\\
& U' \ar[r, "\psi'"] & S&=(\C^*,(2)) .
\end{tikzcd}
\end{equation}
where $\psi'$ is the (projectivized) orbifold pencil from Section \ref{subsec:del-b3} and 
$\nu$ is the orbifold $3$-fold cover corresponding to the epimorphism 
$\pi^{\orb}_1(S)=\Z*\Z_2 \surj \Z_3$ that sends the (meridional) generator of 
$\pi_1(\C^*)=\Z$ to $1$ and the generator of $\Z_2$ to $0$. 
The orbifold fundamental group $\Gamma=\pi_1^{\orb}(\hat{S})$ 
is isomorphic to $\Z*\Z_2*\Z_2 *\Z_2$, 
and so $\T_{\Gamma}=\T_{\Gamma}^0\times \{(\pm 1, \pm 1, \pm 1)\}$, 
where $\T_{\Gamma}^0=\C^*$.  It follows from \eqref{eq:v1piorb} that 
\begin{gather}
\label{eq:delb3-cv-gamma}
\begin{aligned}
\V^1_1(\Gamma)&=\{\bo\}\cup (\T_{\Gamma} \mysetminus \T_{\Gamma}^0),
\\
\V^1_2(\Gamma)&=(1,-1,-1)\T_{\Gamma}^0 \cup (-1,1,-1)\T_{\Gamma}^0 
\cup (-1,-1,1)\T_{\Gamma}^0 \cup (-1,-1,-1)\T_{\Gamma}^0,
\\ 
\V^1_3(\Gamma)&=(-1,-1,-1)\T_{\Gamma}^0. 
\end{aligned}
\end{gather}
Moreover, the lift $\hat{\psi}'\colon \hat{U}'\to \hat{S}$ is again an orbifold pencil.

The $\Z_{15}$-cover $\sigma_{\bm}\colon F'_{\bm}\to U'$ factors as the composite 
$F'_{\bm}\xrightarrow{\kappa} \hat{U}'\xrightarrow{\tau} U'$, where $\kappa$ is a 
$5$-fold cover.  By Theorem \ref{thm:cjl-mf-trivialmono}, 
part \eqref{pa}, the subvariety $\WW^1_1(F'_{\bm})$ has $12$ components passing 
through the identity of $H^1(F'_{\bm};\C^*)^0=(\C^*)^7$: 
eleven subtori of dimension $2$ and one subtorus 
of dimension $3$ (which in fact is a component of $\WW^1_2(F'_{\bm})$), 
all obtained by pullback along $\sigma_{\bm}$. 
By Theorem \ref{thm:cjl-mf-trivialmono}, part \eqref{pb}, 
there is also a $1$-dimensional component of $\WW^1_1(F'_{\bm})$ 
of the form $\sigma^*_{\bm}(\rho T')$, where $\rho T'$ is the translated 
subtorus in $\V^1_1(U')$ from \eqref{eq:delb3-torus}. Pulling 
back along the map 
$(\hat\psi'\circ \kappa)^*\colon H^1(\hat{S};\C^*)\to H^1(F'_{\bm};\C^*)$ 
the translated tori comprising $\V^1_1(\Gamma)$ yields seven $1$-dimensional  
components of $\V^1_1(F'_{\bm})$, of the form 
$\rho'\sigma^*_{\bm}(T')$, for certain order $2$ 
characters $\rho'$. Of those, $4$ are also components 
of $\V^1_2(F'_{\bm})$, while one of those, namely,  
$(\hat\psi'\circ \kappa)^* \big((-1,-1,-1)\T_{\Gamma}^0\big) = 
\sigma^*_{\bm}(\rho T')$, is the unique component of $\V^1_3(F'_{\bm})$.

Finally, since $\A'$ is fiber-type with exponents $\{1,3,4\}$, 
the  lower central series quotients $\gr_k(\pi_1(U'))$ are isomorphic to 
$\gr_k(F_4)\oplus \gr_k(F_3)$ for $k\ge 2$, while, by \cite{Su01, CSc-adv}, 
the Chen ranks $\theta_k(\pi_1(U'))$ are equal to $(k-1)(k+12)$ for $k\ge 4$. 
By Theorem \ref{thm:trivial-mono-q}, the group $K=\pi_1(F'_{\bm})$ has the 
same LCS and Chen ranks as $\pi_1(U')$. In fact, it can be 
shown that $\gr_k(K)\otimes \Z_p \cong \gr_k(\pi_1(U'))\otimes \Z_p$ 
for all primes $p\ne 2$, and likewise for the Chen groups of $K$. 
Direct computation shows that the first few lower central series 
quotients of $K$ and $K/K''$  are as in the following table.
\setlength{\arraycolsep}{8pt}
\def\arraystretch{1.4}
\begin{equation*}
\label{eq:hom-hn}
\begin{array}{|c|c|c|c|c|c|c|c|c|c|c|c}
\hline
k & 1 & 2& 3 & 4 & 5\\
\hline
\gr_k(K) & \Z^{7}\oplus \Z_2^{2} & \Z^{9}\oplus \Z_2^{5} 
& \Z^{28}\oplus \Z_2^{15} & \Z^{78}\oplus \Z_2^{41} & \Z^{252}\oplus \Z_2^{117}\\
\hline
\gr_k(K/K'') & \Z^{7}\oplus \Z_2^{2} & \Z^{9}\oplus \Z_2^{5} 
& \Z^{28}\oplus \Z_2^{15} & \Z^{48}\oplus \Z_2^{?} & \Z^{68}\oplus \Z_2^{?}\\
\hline
\end{array}
\end{equation*}
\def\arraystretch{1}

\section{Yoshinaga's icosidodecahedral arrangement}
\label{sect:mf-yoshi}

In this final section, we describe an arrangement, introduced by Yoshinaga in \cite{Yo20}, 
which exhibits $2$-torsion in the first homology of its (usual) Milnor fiber. 

\subsection{Mod-$2$ Betti numbers of $2$-fold covers}
\label{subsec:2-fold}

Before proceeding with the example, we return to the general setup 
for computing the homology of finite abelian covers treated 
in Section \ref{subsec:hom-cov}, approached this time from a different angle. 

Let $p\colon Y\to X$ be a regular $\Z_{N}$-cover, classified by a 
homomorphism $\alpha\colon \pi_1(X)\to\Z_{N}$. Alternatively, we 
may view $\alpha$ as a cohomology class in $H^1(X;\Z_{N})$, 
called the characteristic class of the cover. 
The covering space $Y=X^{\alpha}$ is connected if and only if the 
homomorphism $\alpha$ is surjective, in which case $\pi_1(Y)=\ker(\alpha)$. 
In the case when $N=2$, more can be said. The next two results 
were first proved in \cite{Yo20} and then strengthened in \cite{Su-bock}.

\begin{lemma}[\cite{Yo20, Su-bock}]
\label{lem:2-fold-cover}
Let $p\colon Y\to X$ be a connected $\Z_2$-cover, with characteristic class 
$\alpha\in H^1(X;\Z_2)$. Then $p$ lifts to a connected, regular $\Z_4$-cover 
$\bar{p}\colon \overline{Y}\to X$ if and only if $\alpha^2=0$.
\end{lemma}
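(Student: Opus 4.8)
The statement to prove is: a connected $\Z_2$-cover $p\colon Y\to X$, with characteristic class $\alpha\in H^1(X;\Z_2)$, lifts to a connected regular $\Z_4$-cover $\bar p\colon \overline Y\to X$ if and only if $\alpha^2=0$ in $H^2(X;\Z_2)$. The natural framework is the Bockstein exact sequence associated to the coefficient sequence $0\to\Z_2\to\Z_4\to\Z_2\to 0$, which gives
\begin{equation*}
\cdots\to H^1(X;\Z_4)\xrightarrow{\ r\ } H^1(X;\Z_2)\xrightarrow{\ \beta\ } H^2(X;\Z_2)\to\cdots,
\end{equation*}
where $r$ is reduction mod $2$ and $\beta$ is the (integral, or mod-$2$) Bockstein. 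The key classical fact I would invoke is that for a degree-one class one has $\beta(\alpha)=\alpha^2=\Sq^1(\alpha)$; this is standard, but since the paper cites \cite{Su-bock} for exactly this circle of ideas I would quote it from there.

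**Key steps, in order.** First, I would set up the dictionary between covers and cohomology: a connected regular $\Z_N$-cover of $X$ corresponds to an epimorphism $\pi_1(X)\surj\Z_N$, equivalently (since $\Z_N$ is abelian) to an epimorphism $H_1(X;\Z)\surj\Z_N$, equivalently to a cohomology class in $H^1(X;\Z_N)=\Hom(H_1(X;\Z),\Z_N)$ whose image is all of $\Z_N$. Under this dictionary, "the $\Z_2$-cover $p$ lifts to a connected $\Z_4$-cover $\bar p$" translates exactly into: the class $\alpha\in H^1(X;\Z_2)$ is the image under $r\colon H^1(X;\Z_4)\to H^1(X;\Z_2)$ of some class $\tilde\alpha\in H^1(X;\Z_4)$, \emph{and} $\tilde\alpha$ is surjective as a homomorphism $H_1(X;\Z)\to\Z_4$. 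The surjectivity bookkeeping is the only subtle point and I address it below. Second, I would apply exactness of the Bockstein sequence at $H^1(X;\Z_2)$: the class $\alpha$ lies in $\im(r)$ if and only if $\beta(\alpha)=0$, i.e. if and only if $\alpha^2=0$. This already gives one direction and the bulk of the other. Third, I would close the surjectivity gap: given $\tilde\alpha\in H^1(X;\Z_4)$ with $r(\tilde\alpha)=\alpha$, the composite $H_1(X;\Z)\xrightarrow{\tilde\alpha}\Z_4\surj\Z_2$ equals $\alpha$, which is surjective by hypothesis (the cover $p$ is connected); hence $\tilde\alpha$ itself cannot have image contained in the subgroup $2\Z_4\cong\Z_2$, so $\im(\tilde\alpha)=\Z_4$ and $\tilde\alpha$ classifies a connected $\Z_4$-cover $\overline Y\to X$. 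Since $r$ is induced by $\Z_4\surj\Z_2$, this $\Z_4$-cover factors through $p$, i.e. $\bar p$ lifts $p$. Conversely, if such a lift $\bar p$ exists, its characteristic class $\tilde\alpha\in H^1(X;\Z_4)$ reduces to $\alpha$, so $\alpha\in\im(r)=\ker(\beta)$ and $\alpha^2=0$.

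**Expected main obstacle.** The homotopy-theoretic content — exactness of the Bockstein sequence and the identity $\beta(\alpha)=\alpha^2$ for $\alpha$ of degree $1$ — is standard and can be cited. The one place that requires genuine care, rather than routine invocation, is the \emph{connectivity/surjectivity} matching: making sure that the lift of the class $\alpha$ to $\Z_4$-coefficients automatically produces a \emph{connected} $\Z_4$-cover (not a disconnected double copy of a $\Z_2$-cover), and conversely. This hinges on the elementary observation above that any $\tilde\alpha$ reducing to the surjection $\alpha$ must itself be surjective onto $\Z_4$, because $2\Z_4$ maps to $0$ in $\Z_2$. I would state this as a short lemma-free paragraph. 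No heavy computation is needed anywhere; the proof is essentially a translation exercise plus one line of the Bockstein sequence.
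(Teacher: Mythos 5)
Your proof is correct, and it follows the same Bockstein route that the sources cited for this lemma (\cite{Yo20} and \cite{Su-bock}) use; the present paper only quotes the result without proof, so there is no in-text argument to compare against, but your argument is the standard one. The translation between connected regular $\Z_N$-covers and surjective classes in $H^1(X;\Z_N)$, the exactness of $H^1(X;\Z_4)\xrightarrow{r}H^1(X;\Z_2)\xrightarrow{\beta}H^2(X;\Z_2)$, the identification $\beta=\Sq^1$ with $\Sq^1\alpha=\alpha\smile\alpha$ in degree one, and the observation that any $\tilde\alpha$ reducing to a surjection onto $\Z_2$ is automatically surjective onto $\Z_4$ (since $2\Z_4$ dies under reduction) are all correct and together close the argument; the surjectivity step, which you rightly flag as the only non-formal point, is handled properly.
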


\begin{prop}[\cite{Yo20, Su-bock}]
\label{prop:betti-cover}
Let $p\colon Y\to X$ be a $2$-fold cover, classified by a non-zero class 
$\alpha\in H^1(X;\Z_2)$. Suppose that $\alpha^2=0$. 
Then, for all $q\ge 1$, 
\begin{equation}
\label{eq:hcov}
b_q(Y,\Z_2)= b_q(X,\Z_2) + \dim_{\Z_2} H^q(H^{*}(X;\Z_2),\delta_{\alpha}), 
\end{equation}
where the differential $\delta_{\alpha}\colon H^{*}(X;\Z_2)\to H^{*+1}(X;\Z_2)$ is given 
by $\delta_{\alpha}(u)=\alpha u$. In particular, $b_q(Y,\Z_2)\ge b_q(X,\Z_2)$. 
\end{prop}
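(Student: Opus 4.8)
\textbf{Proof plan for Proposition \ref{prop:betti-cover}.}
The plan is to realize the $2$-fold cover $p\colon Y\to X$ as the quotient of a $\Z_4$-cover $\bar p\colon \overline Y\to X$ (using the hypothesis $\alpha^2=0$ via Lemma \ref{lem:2-fold-cover}), and then run the general machinery of \S\ref{subsec:hom-cov} with coefficients in $\Z_2$ rather than $\C$. The key point is that over a field $\k$ containing enough roots of unity and with $\gcd(N,\ch \k)=1$, formula \eqref{eq:decomp} decomposes $H_q(Y;\k)$ according to the characters $\xi\in\im(\chi^*)\cap\V^q_s(X,\k)$; but for $\k=\Z_2$ and $N=2$ that classical argument breaks down, since $t^2-1=(t-1)^2$ over $\Z_2$ and the group algebra $\Z_2[\Z_2]$ is not semisimple. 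This is exactly why one passes to a $\Z_4$-cover: the extra layer lets one extract the ``mod-$2$ eigenspace'' information from the Bockstein-type differential $\delta_\alpha$ on $H^*(X;\Z_2)$.

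First I would fix notation: let $\alpha\in H^1(X;\Z_2)=\Hom(\pi_1(X),\Z_2)$ be the given nonzero class, so $\ker\alpha=\pi_1(Y)$, and (by Lemma \ref{lem:2-fold-cover}) choose a lift to $\bar\alpha\colon\pi_1(X)\surj\Z_4$ with $\overline Y=X^{\bar\alpha}$ and a factorization $\overline Y\xrightarrow{2:1} Y\xrightarrow{2:1} X$. Next I would analyze the $\Z_4$-equivariant cellular chain complex $C_*(\overline Y;\Z_2)$ as a complex of modules over $\Z_2[\Z_4]\cong \Z_2[t]/(t-1)^4$. The homology $H_q(\overline Y;\Z_2)$ is then a module over this local ring; I would compare the $\Z_2[\Z_4]$-module $H_q(\overline Y;\Z_2)$ against the $\Z_2[\Z_2]$-module $H_q(Y;\Z_2)$ obtained by restriction along $\Z_2=\Z_4/2\Z_4$, and relate both to the cochain complex $(H^*(X;\Z_2),\delta_\alpha)$. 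The mechanism is that multiplication by $t-1$ on the chain level, after passing to homology, is governed by cup product with $\alpha$: this is the standard identification of the first differential in the (mod $p$) Cartan--Leray spectral sequence of the cover with the operator $u\mapsto \alpha u$. Concretely, I would invoke (or reprove in this low-dimensional case) the exact sequence relating $H_*(X;\Z_2)$, $H_*(Y;\Z_2)$, and the homology of $X$ with coefficients in the nontrivial rank-$1$ piece, together with a dimension count via the Euler characteristic $\chi(Y)=2\chi(X)$ to pin down the $\Z_2$-Betti numbers exactly.

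The cleanest route is probably via the long exact (Gysin/transfer) sequence for the $\Z_2$-cover together with the observation that the connecting maps are, up to the identification above, given by $\delta_\alpha$. Writing $\partial^{q}$ for the relevant connecting homomorphism, one gets short exact sequences
\begin{equation*}
0\to \coker(\delta_\alpha^{q-1}) \to H^q(Y;\Z_2) \to \ker(\delta_\alpha^{q})\to 0,
\end{equation*}
where $\delta_\alpha^q\colon H^q(X;\Z_2)\to H^{q+1}(X;\Z_2)$; taking $\Z_2$-dimensions and using $\dim\ker\delta_\alpha^q+\dim\im\delta_\alpha^q=b_q(X,\Z_2)$ together with $\dim H^q(H^*(X;\Z_2),\delta_\alpha)=\dim\ker\delta_\alpha^q-\dim\im\delta_\alpha^{q-1}$ yields exactly \eqref{eq:hcov}. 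The inequality $b_q(Y,\Z_2)\ge b_q(X,\Z_2)$ then follows since the cohomology of $(H^*(X;\Z_2),\delta_\alpha)$ has nonnegative dimension, or alternatively directly from the injectivity of $p^*$ on mod-$2$ cohomology of a finite regular cover (Proposition \ref{prop:jumpcov-1} has the analogue over $\C$, and the same transfer argument works over $\Z_2$ here because the cover has degree $2$, prime to nothing problematic—one must be slightly careful since $2$ is not invertible, which is precisely where the hypothesis $\alpha^2=0$ and the $\Z_4$-lift re-enter).

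\textbf{Main obstacle.} The delicate point is that the usual transfer argument (averaging over the deck group) fails mod $2$ for a $\Z_2$-cover, so I cannot simply quote the semisimple decomposition \eqref{eq:decomp}. The hypothesis $\alpha^2=0$ is what rescues the situation: it guarantees (Lemma \ref{lem:2-fold-cover}) the $\Z_4$-lift exists, and more importantly it is exactly the condition that makes $(H^*(X;\Z_2),\delta_\alpha)$ a genuine cochain complex so that the connecting maps in the Gysin sequence square to zero and assemble into a differential. I expect the bulk of the work to be a careful spectral-sequence (or exact-couple) bookkeeping argument showing that the $d_1$ (equivalently, the first nontrivial connecting map) of the mod-$2$ Cartan--Leray spectral sequence of $p\colon Y\to X$ is cup product with $\alpha$, and that under $\alpha^2=0$ the spectral sequence collapses at $E_2$ for the purpose of computing total $\Z_2$-Betti numbers; this is essentially the content of \cite{Yo20, Su-bock} and I would reference those for the full details while sketching the exact-sequence version above as the self-contained argument.
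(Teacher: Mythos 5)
Your final paragraph contains the correct and essentially self-contained argument, and it does match the standard proof from the cited sources \cite{Yo20, Su-bock} (the present paper gives no proof, only the citation). The key facts are: (a) the short exact sequence of $\pi_1(X)$-modules
\begin{equation*}
0 \longrightarrow \Z_2 \longrightarrow \Z_2[\Z_2] \longrightarrow \Z_2 \longrightarrow 0,
\end{equation*}
with middle term the induced module of the double cover, has extension class $\alpha$; (b) Shapiro's lemma identifies $H^*(X;\Z_2[\Z_2])$ with $H^*(Y;\Z_2)$; (c) the resulting long exact sequence therefore has connecting map $\cup\,\alpha = \delta_\alpha$, yielding the short exact sequences $0 \to \coker\delta_\alpha^{q-1} \to H^q(Y;\Z_2) \to \ker\delta_\alpha^q \to 0$; and (d) the dimension count you sketch gives \eqref{eq:hcov}. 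That is the whole proof.

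Two points where your write-up is off or unnecessarily complicated. First, the detour through the $\Z_4$-lift and the Cartan--Leray spectral sequence in your opening paragraphs is extraneous here: the transfer/Gysin long exact sequence exists for \emph{every} $\Z_2$-cover regardless of whether $\alpha^2=0$, and no passage to a $\Z_4$-cover is required. Lemma~\ref{lem:2-fold-cover} is a separate statement, used elsewhere in the theory, not an input to this proposition. Second, your explanation of the role of $\alpha^2=0$ is not quite right: you write that this hypothesis makes ``the connecting maps in the Gysin sequence square to zero,'' but consecutive compositions in a long exact sequence vanish automatically, and the two occurrences of $\cup\,\alpha$ are separated by $p^*$ and the transfer, so $\alpha^2=0$ is not needed for exactness. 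The hypothesis $\alpha^2=0$ enters only because, without it, $\im(\delta_\alpha^{q-1})\not\subseteq\ker(\delta_\alpha^{q})$ and the quantity $\dim_{\Z_2} H^q(H^*(X;\Z_2),\delta_\alpha)$ on the right-hand side of \eqref{eq:hcov} is simply undefined; once it is defined, the identity $\dim\ker\delta_\alpha^{q}+\dim\coker\delta_\alpha^{q-1}=b_q(X,\Z_2)+\dim H^q(H^*(X;\Z_2),\delta_\alpha)$ is elementary linear algebra.
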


Further work on the integral homology groups of double covers, and how this 
homology relates to the homology with coefficients in rank $1$ integral local 
systems on the base of the cover can be found in \cite{Sg23, LL, LMW2}. 
 
\subsection{Modular inequalities}
\label{subseq:modular}
Once again, let $Y\to X$ be a connected $\Z_2$-cover 
with characteristic class $\alpha\in H^1(X;\Z_2)$. Assuming $H_{*}(X;\Z)$ is 
torsion-free, it follows from \cite[Thm.~C]{PS-tams} that 
\begin{equation}
\label{eq:modular}
b_q(Y) \le  b_q(X) + \dim_{\Z_2} H^q(H^{*}(X;\Z_2),\delta_{\alpha}).
\end{equation}

When $U=U(\A)$ is the projectivized complement of a hyperplane arrangement $\A$, 
an explicit formula was proposed in \cite[Conjecture 1.9]{PS-plms17}, expressing 
the first Betti number $b_1(F)$ of the Milnor fiber of the arrangement in terms of 
the resonance varieties $\RR^1_s(U,\Z_p)$, for $p=2$ and $3$, generalizing 
the formula from Theorem \ref{thm:ps-triple}. At the prime $p=2$, the conjecture 
is equivalent to the inequality \eqref{eq:modular} holding as equality in degree $q=1$ 
for the $2$-fold cover $U^{\alpha}\to U$ corresponding to the class $\alpha\in H^1(U;\Z_2)$ 
which evaluates to $1$ on each meridional generator of $H_1(U;\Z_2)$.

In recent work \cite{ISY}, Ishibashi, Sugawara, and Yoshinaga revisit this 
topic. In \cite[Cor.~2.5]{ISY}, they prove that equality holds in \eqref{eq:modular} 
if and only if $H_1(Y;\Z)$ has no non-trivial $2$-torsion. 
Therefore, the formula conjectured in \cite{PS-plms17} fails  
at the prime $p=2$ precisely when $H_1(U^{\alpha};\Z)$ has 
non-trivial $2$-torsion. An explicit example where this happens 
is given next. 

\begin{figure}[t]
\centering
\begin{tikzpicture}[scale=1.0]
\draw (0,0) circle (3.3);
\draw (0.809,0)++(0,-3) -- +(90:6); 
\draw (-0.309,0)++(0,-3) -- +(90:6); 
\draw (0.118,0)++(0,-3) -- +(90:6); 
\draw (72:0.809)++(162:3) -- +(342:6); 
\draw (252:0.309)++(162:3) -- +(342:6); 
\draw (72:0.118)++(162:3) -- +(342:6); 
\draw (144:0.809)++(234:3) -- +(54:6); 
\draw (322:0.309)++(234:3) -- +(54:6); 
\draw (144:0.118)++(234:3) -- +(54:6);
\draw (216:0.809)++(126:3) -- +(306:6); 
\draw (36:0.309)++(126:3) -- +(306:6); 
\draw (216:0.118)++(126:3) -- +(306:6); 
\draw (288:0.809)++(198:3) -- +(18:6); 
\draw (108:0.309)++(198:3) -- +(18:6); 
\draw (288:0.118)++(198:3) -- +(18:6); 
\end{tikzpicture}
\caption{The icosidodecahedral arrangement}
\label{fig:yoshi}
\end{figure}
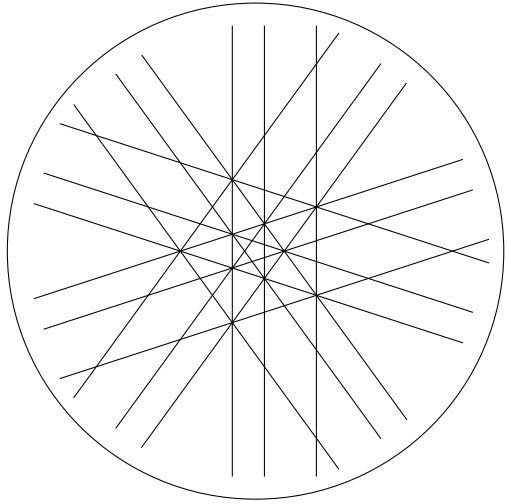

\subsection{The icosidodecahesdral arrangement}
\label{subsec:yoshi-arr}
In \cite{Yo20}, Yoshinaga constructed an arrangement of $16$ hyperplanes 
in $\C^3$  with some remarkable properties.  The construction is based on 
the symmetries of a polyhedron in $\R^3$, called the {\em icosidodecahedron}. 
This is a quasiregular polyhedron with $20$ triangular and $12$ pentagonal 
faces that has $30$ vertices (each one at the intersection of $2$ triangles 
and $2$ pentagons), and $60$ edges (each one separating a triangle from 
a pentagon). Letting $\phi=(1+\sqrt{5})/2$ denote the golden ratio, 
the vertices of an icosidodecahedron with edges of unit length
are given by the even permutations 
of $(0, 0, \pm 1)$ and $\tfrac{1}{2}(\pm 1, \pm \phi, \pm \phi^{2})$.

One can choose $10$ edges to form a decagon, corresponding to great circles in the 
spherical tiling; there are $6$ ways to choose these decagons, thereby giving $6$  
planes. Each pentagonal face has five diagonals, and there are $60$ such diagonals in all, which 
partition in $10$ disjoint sets of coplanar ones, thereby giving $10$ planes, each containing 
$6$ diagonals. These $16$ planes form an arrangement $\A_{\R}$ in $\R^3$, whose complexification 
is the icosidodecahedral arrangement $\A$ depicted in Figure \ref{fig:yoshi}.

The projective line arrangement $\P(\A)$ has $15$ quadruple points and $30$ double points. 
The projective complement $U=U(\A)$ is aspherical \cite{Liu}, and has Poincar\'e polynomial 
$P(t)=1+15t+60t^2$. Let $F=F(\A)$ be the Milnor fiber of is arrangement. As shown in \cite{Yo20}, 
we have that $H_1(F;\Z)=\Z^{15}\oplus \Z_2$. Thus, the algebraic monodromy 
of the Milnor fibration is trivial over $\Q$, but not over $\Z$.

Since the monodromy of the Milnor fibration acts trivially on $H_1(F;\k)$ 
for every field $\k$ of characteristic different from $2$, the 
results of \cite{Su-lcs-mono} show that $\gr(\pi_1(F))\otimes \k\cong \gr(\pi_1(U))\otimes \k$ 
for such fields $\k$. Direct computation shows that the 
first few lower central series quotients of the group $K=\pi_1(F)$ 
and of its maximal metabelian quotient  are given by
\setlength{\arraycolsep}{8pt}
\def\arraystretch{1.4}
\begin{equation*}
\label{eq:yoshi-gr}
\begin{array}{|c|c|c|c|c|c|c|c|c|c|}
\hline
k & 1 & 2& 3 & 4 \\
\hline
\gr_k(K) & \Z^{15}\oplus \Z_2 & \Z^{45}\oplus \Z_2^{7}
& \Z^{250}\oplus \Z_2^{43}& \Z^{1,405}\oplus T\\
\hline
\gr_k(K/K'') &  \Z^{15}\oplus \Z_2 & \Z^{45}\oplus \Z_2^{7}
& \Z^{250}\oplus \Z_2^{43}& \Z^{530}\oplus \overline{T} \\
\hline
\end{array}
\def\arraystretch{1}
\end{equation*}
where $T$ is a finite abelian $2$-group 
and $\overline{T}$ is a quotient of $T$.

\smallskip
\begin{ack}
\label{ack}
Preliminary versions of this work were presented at meetings  
at Oberwolfach (January 2021), Locarno (July 2022), 
Edinburgh (March 2023), Lille (June 2023), and Tokyo (December 2023). 
Much of the work was done in 2023 while the author was 
visiting the Mathematical Research Institute at the University 
of Sydney, Australia (January--February), the 
Institute of Mathematics of the Romanian Academy 
in Bucharest, Romania (May--July), the 
University of Lille, France (June), and Rykkio and 
Osaka Universities in Japan (December).
I wish to thank the organizers of those meetings 
for giving me the opportunity to lecture on these topics,  
and the aforementioned institutes and universities for 
their support and hospitality during my stays there. 
I also thank the referee for useful comments that 
helped improve the paper.
\end{ack}


\newcommand{\arxiv}[1]
{\texttt{\href{http://arxiv.org/abs/#1}{arXiv:#1}}}

\newcommand{\arx}[1]
{\texttt{\href{http://arxiv.org/abs/#1}{arXiv:}}
\texttt{\href{http://arxiv.org/abs/#1}{#1}}}

\newcommand{\arxx}[2]
{\texttt{\href{http://arxiv.org/abs/#1.#2}{arxiv:#1.}} 
\texttt{\href{http://arxiv.org/abs/#1.#2}{#2}}}

\newcommand{\doi}[1]
{\texttt{\href{http://dx.doi.org/#1}{doi:\nolinkurl{#1}}}}

\renewcommand{\MR}[1]
{\href{http://www.ams.org/mathscinet-getitem?mr=#1}{MR#1}}

\end{document}